\documentclass[ envcountsect]{svjour3}
\usepackage[lmargin=1in,tmargin=1in,rmargin=1in,bmargin=1in]{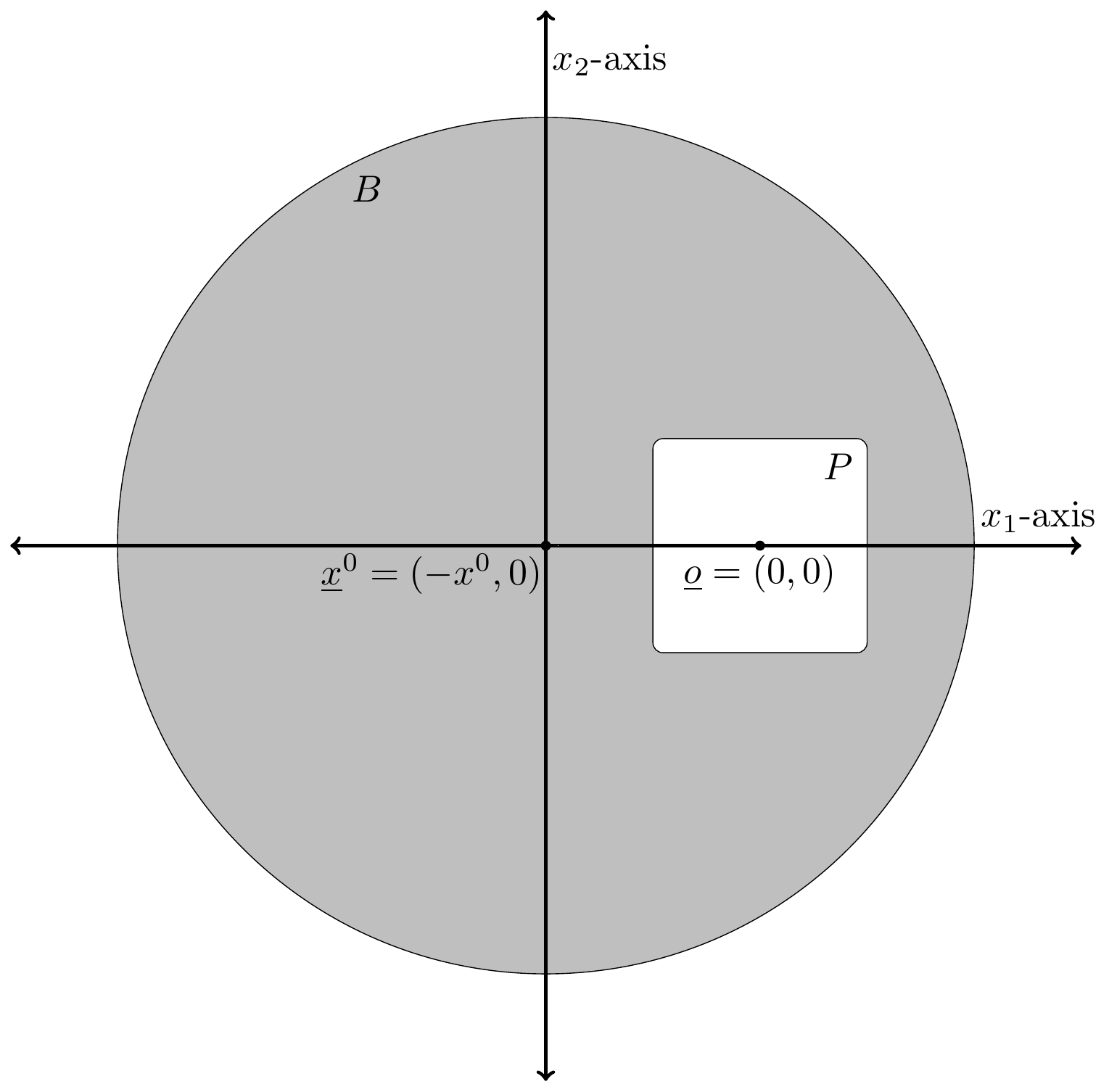}
\usepackage{amsmath,amssymb,latexsym, amscd,epsfig,epstopdf,enumerate}
\usepackage{graphicx,subfig,pdfpages}

\usepackage[utf8x]{inputenc}
\usepackage{amssymb,pdfsync,epstopdf,verbatim,gensymb}
\usepackage{nccmath}
\usepackage{textcomp}
\usepackage{amsmath}
\usepackage{float}
\usepackage{cite}
\usepackage{amsfonts}
\usepackage{gensymb}
\usepackage{color}
\usepackage{hyperref}
\usepackage{multirow}

\usepackage{caption}
\captionsetup{justification   = raggedright,
              singlelinecheck = false,font=footnotesize}

\graphicspath{{./geometry_pics/}}
\begin{document}
\title{Energy Minimization Problem related to certain Liquid Crystal Models with respect to the Position, Orientation and Size of an Obstacle having Dihedral Symmetry 
} 
\author{Anisa M. H. Chorwadwala 
}
\institute{ Anisa M. H. Chorwadwala \at 
		Indian Institute of Science Education and Research Pune,\\
        Dr. Homi Bhabha Road, Pashan, Pune 411008, India, \\
        Tel: +91(20)25908218, \\
		anisa@iiserpune.ac.in
}
\titlerunning{Obstacle Position, Orientation and Size}
\maketitle
\smartqed

\begin{abstract} 
  In this paper, we deal with an obstacle placement problem inside a disk, that can be formulated as an energy minimization problem with respect to the rotations of the obstacle about its center, with respect to the translations of the obstacle within a planar disk and also with respect to the expansion or contraction of the obstacle inside the disk. We also include its sensitivity with respect to the boundary data.
  
  We consider the inhomogeneous Dirichlet Boundary Value Problem (\ref{laplace_equation_lc}) on $\Omega = B \setminus P$ for $g=M$, a nonzero constant. Here, the planar obstacle $P$ is invariant under the action of a dihedral group $\mathbb{D}_n$, $n \in \mathbb{N}$, $n \geq 3$, and $B$ is a disk in $\mathbb{R}^2$ containing $P$. We make assumptions (0) to (iv) given in section \ref{assumptions} on $P$ and $B$. The obstacle $P$ satisfying these conditions becomes a star-shaped domain with respect to its center $\underline{o}$. Examples of such obstacles include regular polygons with smoothened corners, ellipses, among other star-shaped domains satisfying conditions (0) to (iv) mentioned in section \ref{assumptions}. Please see figure 1 on page 1114 of \cite{elsoufi_kiwan} for more examples of admissible obstacles. In this setting, we investigate the extremal configurations of the obstacle $P$ with respect to the disk $B$, for the energy functional $E$ given by (\ref{energy_functional}) associated with the boundary value problem (\ref{laplace_equation_lc}). 
  
The Dirichlet energy (\ref{energy_functional}) is the one-constant Oseen-Frank energy of a uniaxial planar nematic liquid crystal. The energy minimizing or maximizing nematic profiles correspond to extremals of (\ref{energy_functional}) and are hence, described by harmonic functions, subject to suitable boundary conditions. The class of problems considered here sheds some light into optimal locations of holes with a certain symmetry inside a nematic-filled disc, so as to optimise the corresponding one-constant nematic Oseen-Frank energy.
 \end{abstract}
 
\keywords{extremal fundamental Dirichlet eigenvalue \and dihedral group \and shape derivative \and finite element method \and moving plane method}

\subclass{35J05 \and 35J10 \and 35P15 \and 49R05 \and 58J50}

\section{Introduction}
 %

In this paper, we consider the family of domains 
consisting of doubly connected planar domains with specified boundary conditions on the inner and outer boundary component. We will consider a disk minus an obstacle with dihedral symmetry. In this case, 
the corresponding nematic profile, modelled as an energy minimizer, depends not only on the location of the center of the inner obstacle relative to the outer disk but also on the orientation of the obstacle w.r.t. the diameter of the outer disk passing through the center of the obstacle. It, of course, does depend on the size of the obstacle and the boundary data too. We aim to find the optimal orientation of the inner obstacle about its fixed center inside the disk. We also study the behaviour of energy functional w.r.t. the translation of the obstacle within the disk. The behaviour of the energy functional w.r.t. the expansion or contraction of the obstacle inside the disk is considered too. We also include its sensitivity w.r.t. the boundary data of (\ref{laplace_equation_lc}). 

\subsection{Motivation} For a motivation of placing an obstacle with dihedral symmetry inside a disk one could refer to \cite{Anisa-Souvik}. 
We include it here again for completeness. Such problems apply, for example, to the designing of some musical instruments, where one usually has a symmetric structure of the obstacle with respect to the domain. The energy minimization and the Dirichlet eigenvalue optimization problem for doubly connected, constant area, planar domains where the domain and the obstacle both have a rectifiable boundary curve (with no a priori symmetry) is an open problem. Solving this problem for the case where both the outer and the inner boundary curves are circular is a step towards solving this challenging problem. Both the outer boundary and the inner boundary having dihedral symmetry can be thought of as the second best symmetry that one could have. A version of this was addressed in \cite{elsoufi_kiwan}. But for them the obstacle and the domain were always concentric. We wanted to study the non-concentric version of the problem they considered. Therefore, we first consider the case where the domain is circular and the obstacle has a dihedral symmetry. In future we hope to understand the non-concentric version of the problem in \cite{elsoufi_kiwan} where both the boundaries have a dihedral symmetry. We would like to bring to the attention of the reader the second, third and fourth paragraphs on page 1113 of \cite{elsoufi_kiwan} to highlight what its authors feel about addressing the problem in a more general setting.

To see more motivation for considering an obstacle with a dihedral symmetry, it's a good idea to recall the proof of the classical isoperimetric problem for planar domains. This proof makes use of the standard approximation arguments and the isoperimetric theorem for constant area polygons with $n$ number of sides. The isoperimetric theorem for polygons states that ``There exists a perimeter minimizer among all polygons with $n$ number of sides enclosing a fixed area $A$ and it is the regular polygon 
with $n$ number of sides having area $A$". 

This is one reason why understanding an optimization problem for polygons helps in approaching the problem in the general setting. And, studying the case for regular polygons is a step towards it. In our current work, we assume a smooth boundary for the obstacle though. The case where the boundary may not be smooth (polygons, for example), follows by the arguments similar to \cite{aithal_acushla,aithal_raut}. In our paper, we deal with a larger class of obstacles in the sense that we do not assume that the boundary curve of the obstacle is piecewise linear. Please note that the regular polygons with smoothed corners, ellipses, among other star-shaped domains satisfying conditions (0) to (iv) mentioned in section \ref{assumptions} are admissible candidates for the obstacle. Please see figure 1 on page 1114 of \cite{elsoufi_kiwan} for more examples of admissible obstacles. 

For PDEs, the reduction of variables through dimensional analysis, is connected to the reduction from invariance under groups of symmetries. Please see \cite{BGKS}. 

Another motivation for placing an obstacle with dihedral symmetry inside a disk, could be that we have a disk filled with liquid crystals (LC) and the obstacle could be a foreign inclusion. We could model inclusions of different shapes and symmetries inside a LC-filled disk. The problem with homogeneous boundary conditions is not interesting from the liquid crystals point of view. Please see \cite{Canevari-Majumdar-Spicer}, \cite{Majumdar-Lewis}, \cite{Majumdar-Wang-Canevari} and \cite{LAHM}. 
Although, in the current paper, the connection to liquid crystals is only tangential, we aim to postulate problems in liquid crystal modelling where these results can be of value. They can effectively be interpreted as boundary value problems for a two dimensional nematic director field inside a disk with certain kinds of obstacles and fixed constant boundary conditions. 

The Dirichlet energy, $\int \|\nabla \theta\|^2 dA$, 
is the one-constant Oseen-Frank energy of a uniaxial planar nematic liquid crystal. The state of the uniaxial nematic is described by a two-dimensional vector field, $n = (\cos \theta, \sin \theta)$ such that $n$ describes the locally preferred direction of averaged molecular alignment at every point of space, subject to suitably prescribed boundary conditions. The energy minimizing or maximizing nematic profiles correspond to extremals of the energy functional and are hence, described by harmonic functions, $\theta$, subject to suitable boundary conditions. The class of problems considered in this paper sheds some light into optimal locations of holes with a certain symmetry inside a nematic-filled disc, so as to optimise the corresponding one-constant nematic Oseen-Frank energy.


\subsection{Problem formulation}\label{assumptions}In this paper, we consider the following inhomogeneous Dirichlet Boundary Value Problem:
\begin{equation}\label{laplace_equation_lc}
\begin{aligned}
\Delta u &=0 & \mbox{ in } & \Omega:=B \setminus P,\\
&u =0 & \mbox{ on } & \partial P,\\
&u =g 
& \mbox{ on } & \partial B.
\end{aligned}
\end{equation}
For us $g=M$, a nonzero constant. Here, we take the same family of admissible domains $\Omega$ as in \cite{Anisa-Souvik}. That is, we consider the case where the planar obstacle $P$ is invariant under the action of a dihedral group $\mathbb{D}_n$ $n \geq 3$, $n \in \mathbb{N}$
. It follows that the axes of symmetry of $P$ intersect in a unique point in the interior of $P$. We call this point the center of $P$ and denote it by $\underline{o}$. Let $B$ be a disk in $\mathbb{R}^2$ containing $\underline{o}$ away from its center. We place the obstacle $P$ centered at the fixed point $\underline{o}$ inside $B$. 
That is, the centers of $P$ and $B$ are distinct. 
The disk $B$ obviously is invariant under the action of dihedral groups $\mathbb{D}_n$, for each natural number $n ≥ 3$. Therefore, in our case, we have the following: (0) the volume constraint on $P$ and $B$ both, (i) invariance of both $B$ and $P$ under the action of the same dihedral group, 
(ii) $B$ and $P$ need not be concentric, (iii) smoothness condition on both the boundaries, 
(iv) the monotonicity condition on the boundary $\partial P$ of the obstacle $P$ as in \cite{Anisa-Souvik} and \cite{elsoufi_kiwan}. That is, the distance $d(\underline{o}, x)$, between the center $\underline{o}$ of $P$ and the point $x$ on the boundary $\partial P$ of $P$, is monotonic as a function of the argument $\phi$ in a sector delimited by two consecutive axes of symmetry of $P$. Without loss of generality, we can assume that the centre of $P$ is $\underline{o}:=(0,0)$. 
We recall a monotonicity condition on the boundary of the disk $B$ derived in Lemma 4.1 
 of \cite{Anisa-Souvik}. Therefore, for us condition (v) of \cite{elsoufi_kiwan} for $B$ is replaced by the statement of Lemma 4.1 
 of \cite{Anisa-Souvik}. 
Theorem 8.14 on page 188 of \cite{Gilberg-Trudinger} states that the boundary value problem (\ref{laplace_equation_lc}) admits a unique solution in $\mathcal{C}^\infty(\overline{\Omega})$. 

In this setting, we investigate the extremal configurations of the obstacle $P$ with respect to the disk $B$, for the energy functional 
\begin{equation}\label{energy_functional}
E(\Omega):= \int_\Omega \|\nabla u\|^2\,dx
\end{equation}
 associated with (\ref{laplace_equation_lc}), by rotating $P$ inside $B$, about the fixed center $\underline{o}$ of $P$. We also study the behaviour of $E$ w.r.t. the translation of $P$ within $B$, and w.r.t. the expansion/contraction of the obstacle inside the disk $B$. We also include its sensitivity w.r.t. the boundary data $g=M$.
  
 The reviewers of the paper pointed out to us that this problem can be interpreted as an optimal placement of a cavity or inclusion of specified shape in the domain of a planar disk, where the state field $u(x)$ satisfies the harmonic equation and Dirichlet boundary conditions and that more applications can be related to thermal or electrical fields control. The posed problem (\ref{laplace_equation_lc}) can be interpreted in terms of a steady state temperature field $u =T(x)$ in a disk heated by a rectangular or polygonal channel with a specified heating temperature on its boundary. We continue to use the term ``obstacle" following \cite{elsoufi_kiwan} and \cite{harrel_kurata}.

We follow the same line of ideas as in \cite{Anisa-Souvik} and \cite{harrel_kurata}. A novelty, as compared to \cite{Anisa-Souvik} and \cite{harrel_kurata}, is going to be the computation of (a) the shape derivative of the solution of a boundary value problem with inhomogeneous boundary data, (b) the Eulerian derivative of the energy functional w.r.t. the variations of the domain under the action of certain vector fields, (c) the study of the behaviour of the energy functional w.r.t. the expansion/contraction of the obstacle $P$ inside the disk $B$, and (d) its sensitivity to the boundary data. We characterize the global maximizers/minimisers w.r.t. both rotations and translations of the obstacle within the disk. We do indicate what happens when the size of the obstacle approaches zero. 

The reviewers of this paper brought to our attention the extensive research conducted in last two decades on heat conduction and thermo-elasticity with shape sensitivity analysis of temperature field and state functionals for varying external boundaries and interfaces. In fact, the steady state temperature field satisfies the same harmonic equation and mixed boundary value conditions. Please see \cite{DM1}, \cite{DM2}. According to them, the derivations presented in the present paper are specific examples of more general formulae for arbitrary state functionals, also for the cases of rotation and translation of inclusions or cavities.

\subsection{Presentation of the paper} 
In order to identify the various different configurations in the family of domains under consideration we recall, in section \ref{sec:prelim}, a few important definitions from \cite{Anisa-Souvik} through a sequence of illustrative images. 
In Section \ref{stmnt}, we state our main theorems. Theorem \ref{max_min} describes the extremal configurations for the energy functional associated to (\ref{laplace_equation_lc}) over the family of admissible domains. This theorem also characterises the maximising and the minimising configurations for it. It is worth emphasising here that Theorem \ref{max_min} describes the behaviour of the energy functional w.r.t. the rotations of the obstacle about its fixed center.
In Theorem \ref{global} we characterize the global maximizing and the minimizing configurations for $E$ w.r.t. the rotations of the obstacle about its center, where the center is allowed to translate within the disk. 
We state a result about the critical points for $n$ odd too. Theorem \ref{expansion} states that the energy functional strictly decreases as the obstacle contracts under a scaling factor and that it approaches its minimum value as the obstacle shrinks to a point. 
In section \ref{scal}, we carry out the shape calculus analysis for the boundary value problem at hand and derive an expression for the Eulerian derivative of the energy functional. We observe that the energy functional increases as the value of $|M|$ increases. 
In section \ref{proof}, we give a proof of the extremal configurations for obstacles with even order dihedral symmetry. We prove the result for $n$ odd stated in section \ref{stmnt}. In section \ref{secGlobal}, we provide a proof for the global extremal configurations. 
In section \ref{secExpansion}, we give a proof of Theorem \ref{expansion}. 
We then indicate what happens when the size of the obstacle approaches zero. 
Section \ref{sec:num_results} presents some numerical results that validate the the extremal configurations obtained and also justify the conjectures formulated. 
 In section \ref{Conc}, we summarize the important conclusions of the paper. 
\section{
The ON and OFF configurations}\label{sec:prelim}
Let $n$ be a positive integer, $n \geq 3$. Let $P$ be a compact and simply connected subset of $\mathbb{R}^2$ satisfying conditions (0) to (iv) given in section \ref{assumptions}. 
We direct the reader to section 2 of \cite{Anisa-Souvik} for definitions of the 
incircle $C_1(P)$ and the circumcircle $C_2(P)$ of the obstacle $P$, vertices of $P$, opposite vertices of $P$, inner vertices and outer vertices of $P$. 
When there is no scope of confusion, we will use the notations $C_1$ and $C_2$ for  $C_1(P)$ and $C_2(P)$, respectively. Let $B$ be an open disk in $\mathbb{R}^2$ of radius $r_1$ such that $B \supset cl(conv(C_2(P)))$. Let $\Omega := B \setminus P$. 
Please refer to section 3.2 of \cite{Anisa-Souvik} for the definitions of ON and OFF configurations of the obstacle w.r.t. the disk $B$. We only retain the pictures here as a quick recap to illustrate those definitions. Without loss of generality we assume that the center $\underline{o}$ of the obstacle $P$ is at the origin $(0,0)$ of $\mathbb{R}^2$.
\begin{figure}[ht]
\centering
\subfloat[$\mathbb{D}_4$ symmetry]{\includegraphics[width=0.15\textwidth]{geometry.pdf}    }\hspace{10mm}
\subfloat[$\mathbb{D}_4$ symmetry]{\includegraphics[width=0.15\textwidth]{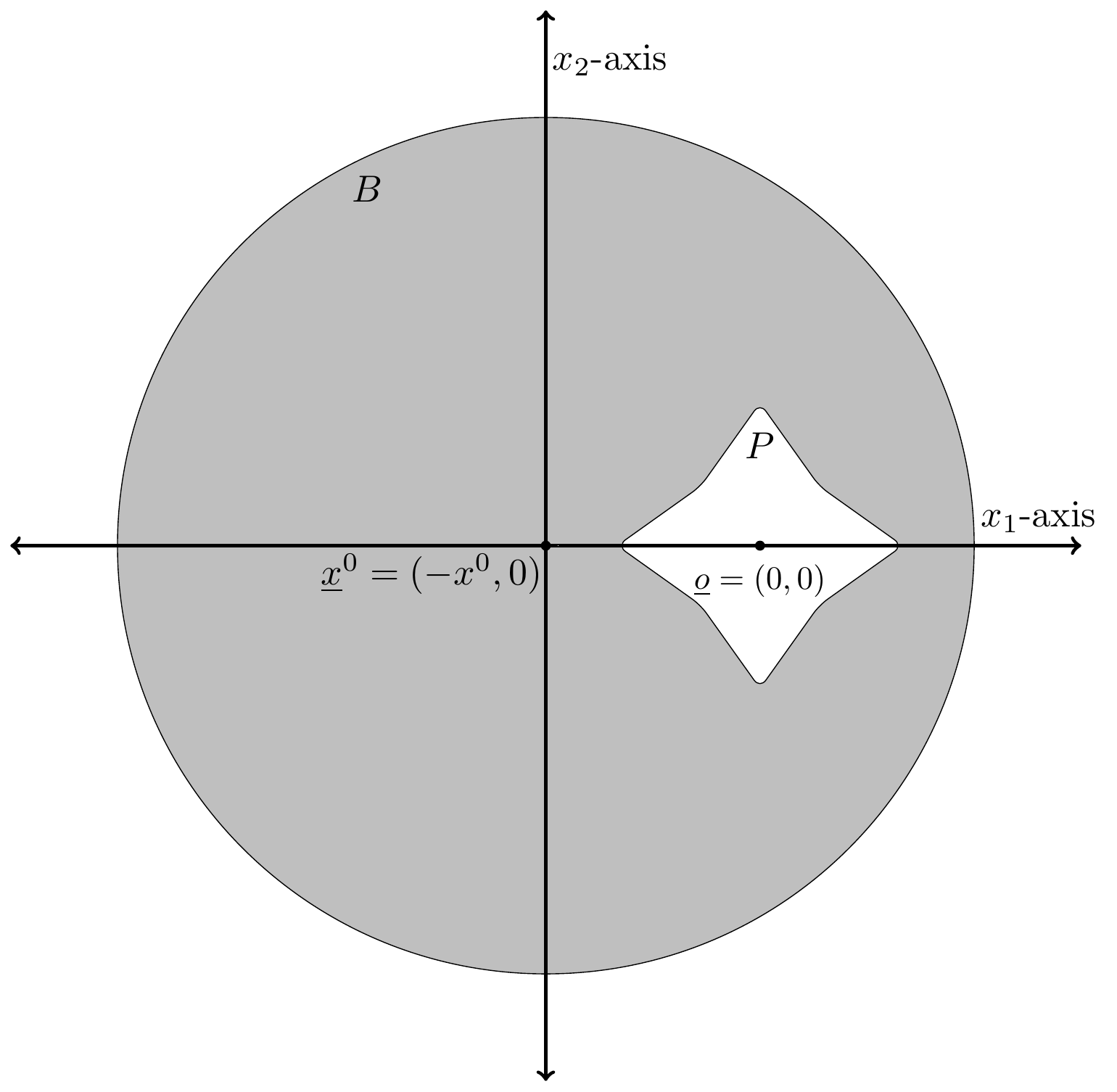}    }\hspace{10mm}
\subfloat[$\mathbb{D}_3$ symmetry]{\includegraphics[width=0.15\textwidth]{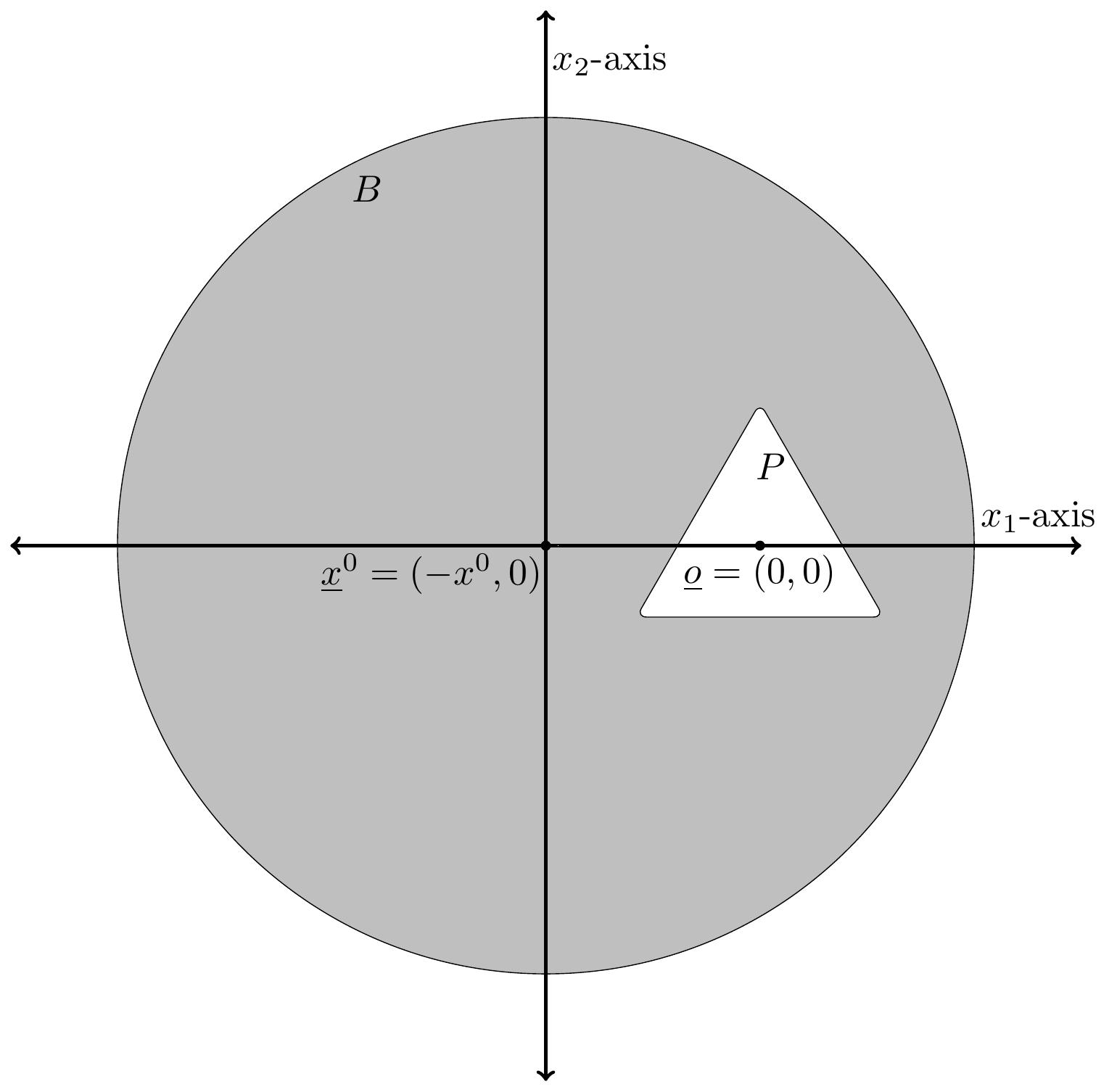}    } \hspace{10mm}
\subfloat[$\mathbb{D}_5$ symmetry]{\includegraphics[width=0.15\textwidth]{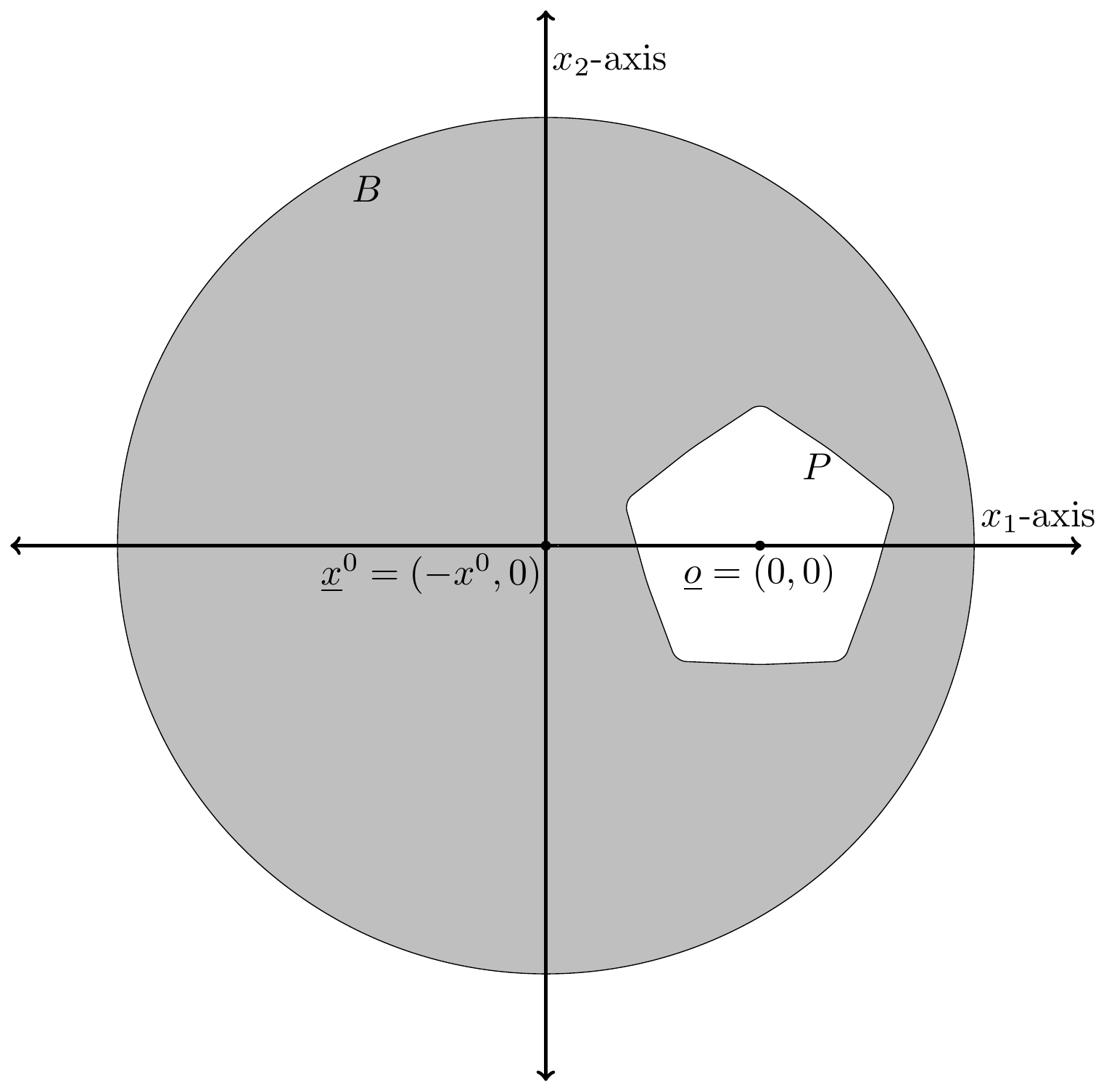}    }
\caption{Obstacles having $\mathbb{D}_n$ symmetry
}\label{fig:domain_odd}
\end{figure}
\begin{figure}[H]\centering
\subfloat[]{
\includegraphics[width=0.25\textwidth]{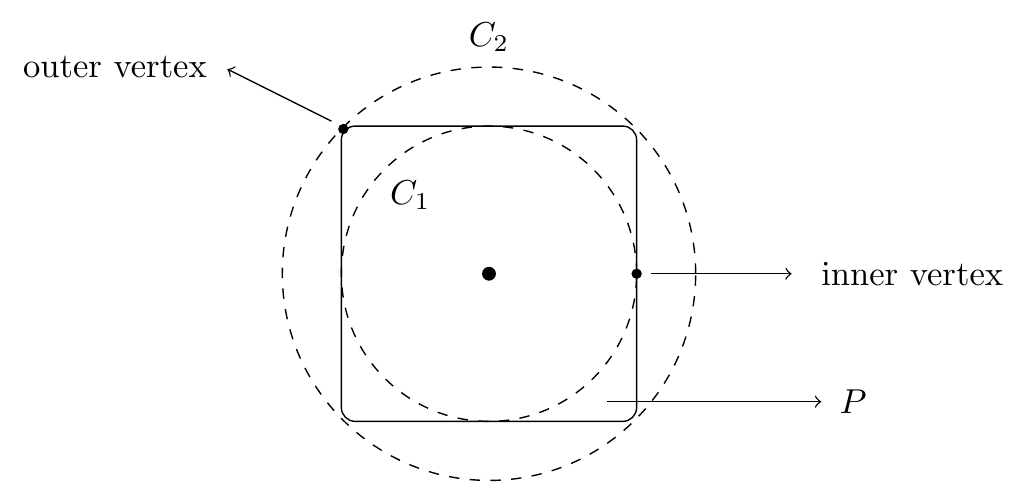}}
\subfloat[]{
\includegraphics[width=0.25\textwidth]{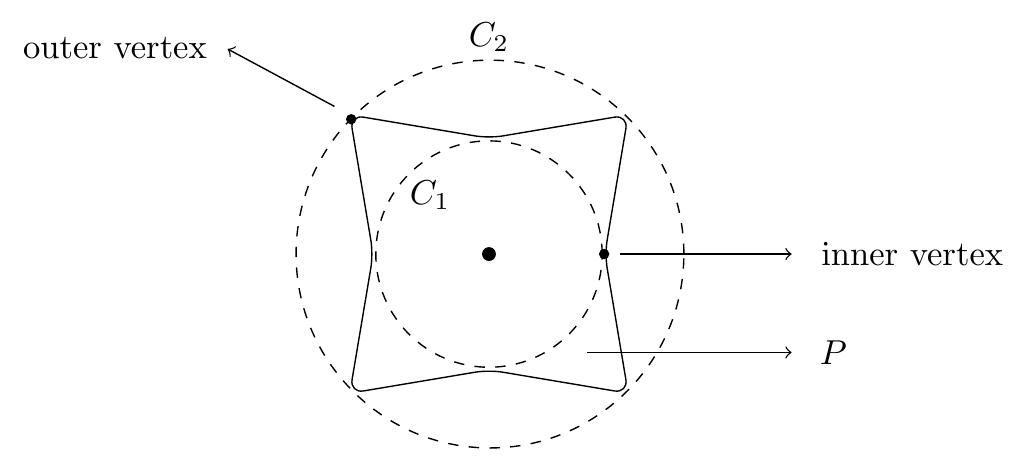}}
\subfloat[]{
\includegraphics[width=0.25\textwidth]{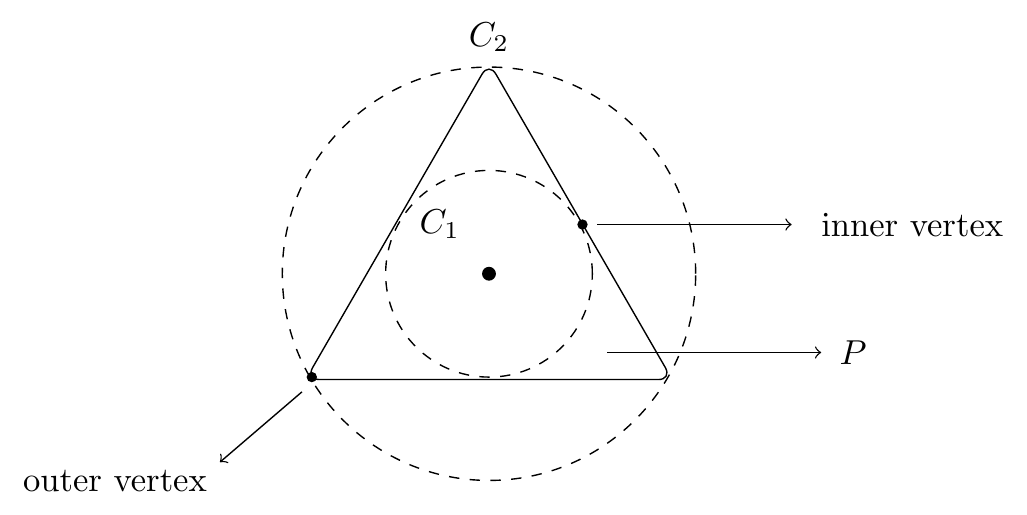}}
\subfloat[]{
\includegraphics[width=0.25\textwidth]{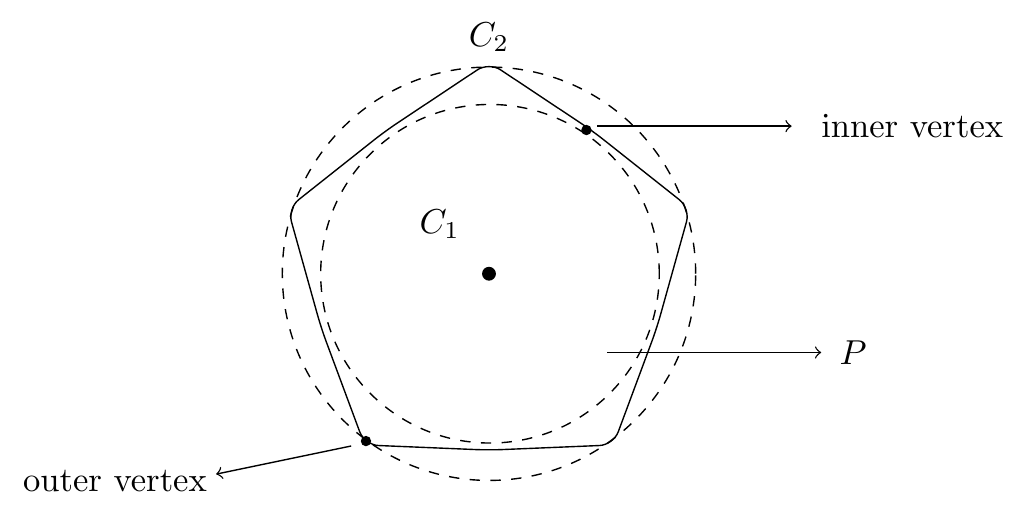}}
\label{Vertex}
\caption{Vertices of $P$ }
\end{figure}
\begin{figure}[H]\centering
\subfloat[OFF 
]{
\label{Off position}
\includegraphics[width=0.1\textwidth]{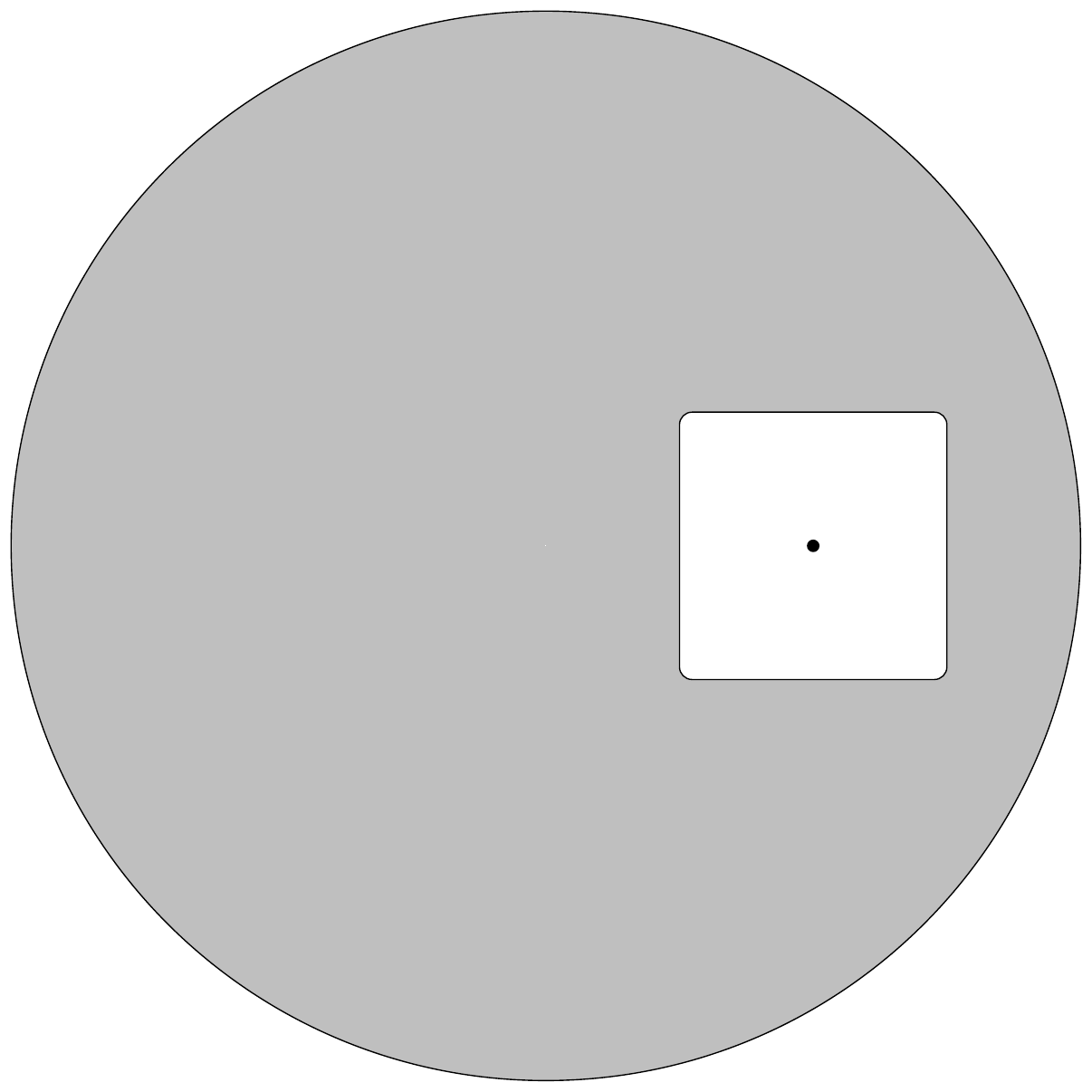}}
\hspace{8mm}
\subfloat[ON 
]{
\label{On position}
\includegraphics[width=0.1\textwidth]{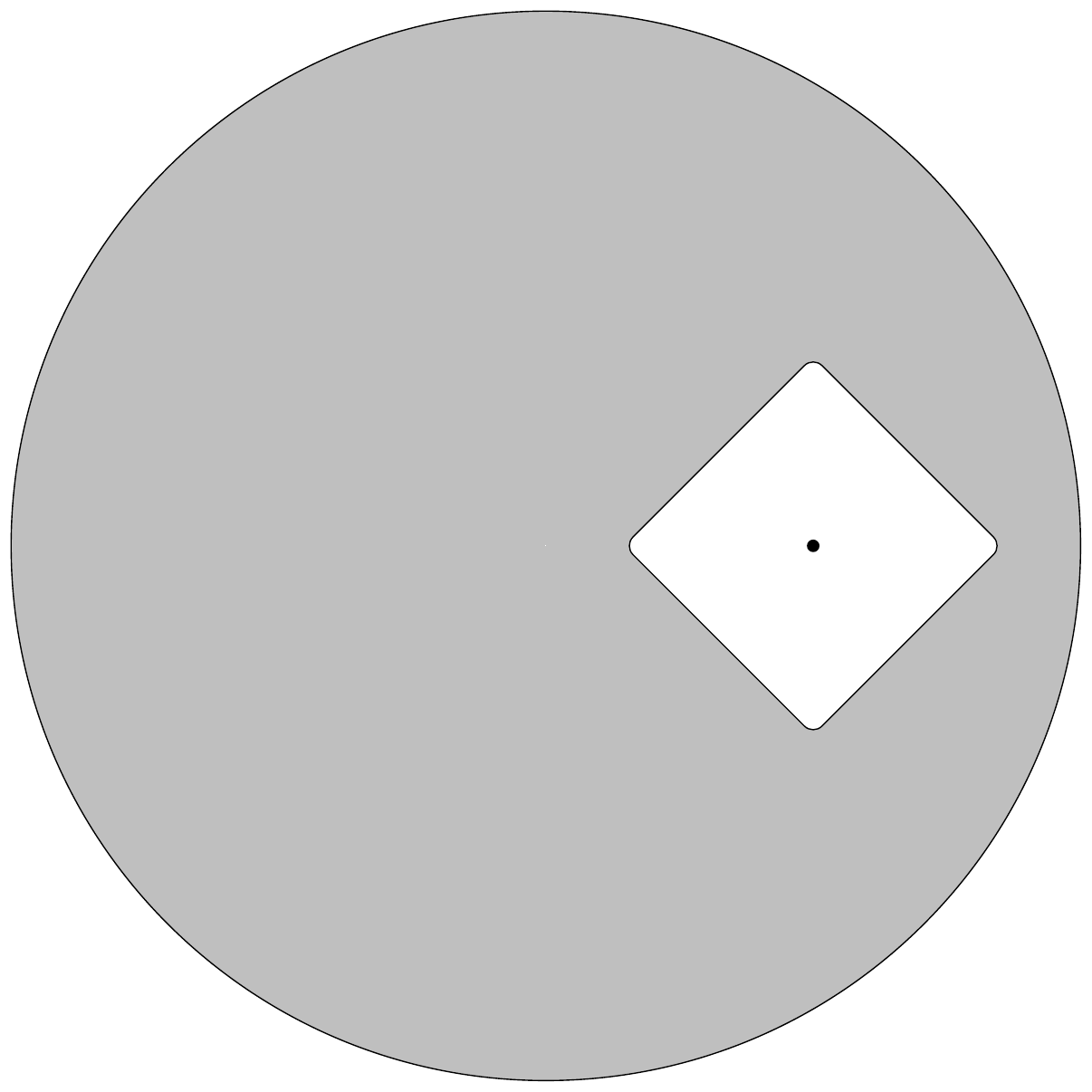}}
\hspace{10mm}
\subfloat[OFF 
]{
\label{Off position2}
\includegraphics[width=0.1\textwidth]{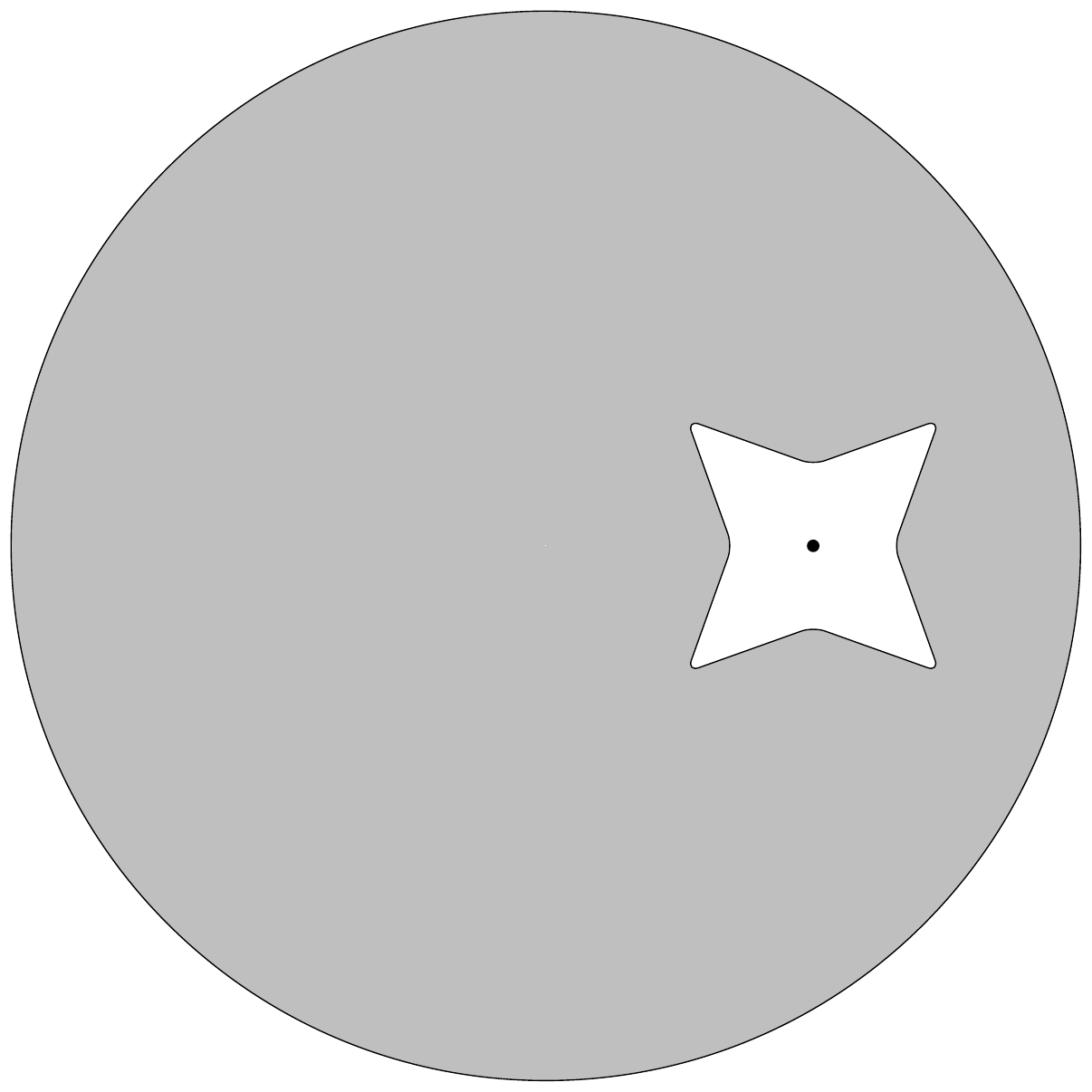}}
\hspace{8mm}
\subfloat[ON 
]{
\label{On position2}
\includegraphics[width=0.1\textwidth]{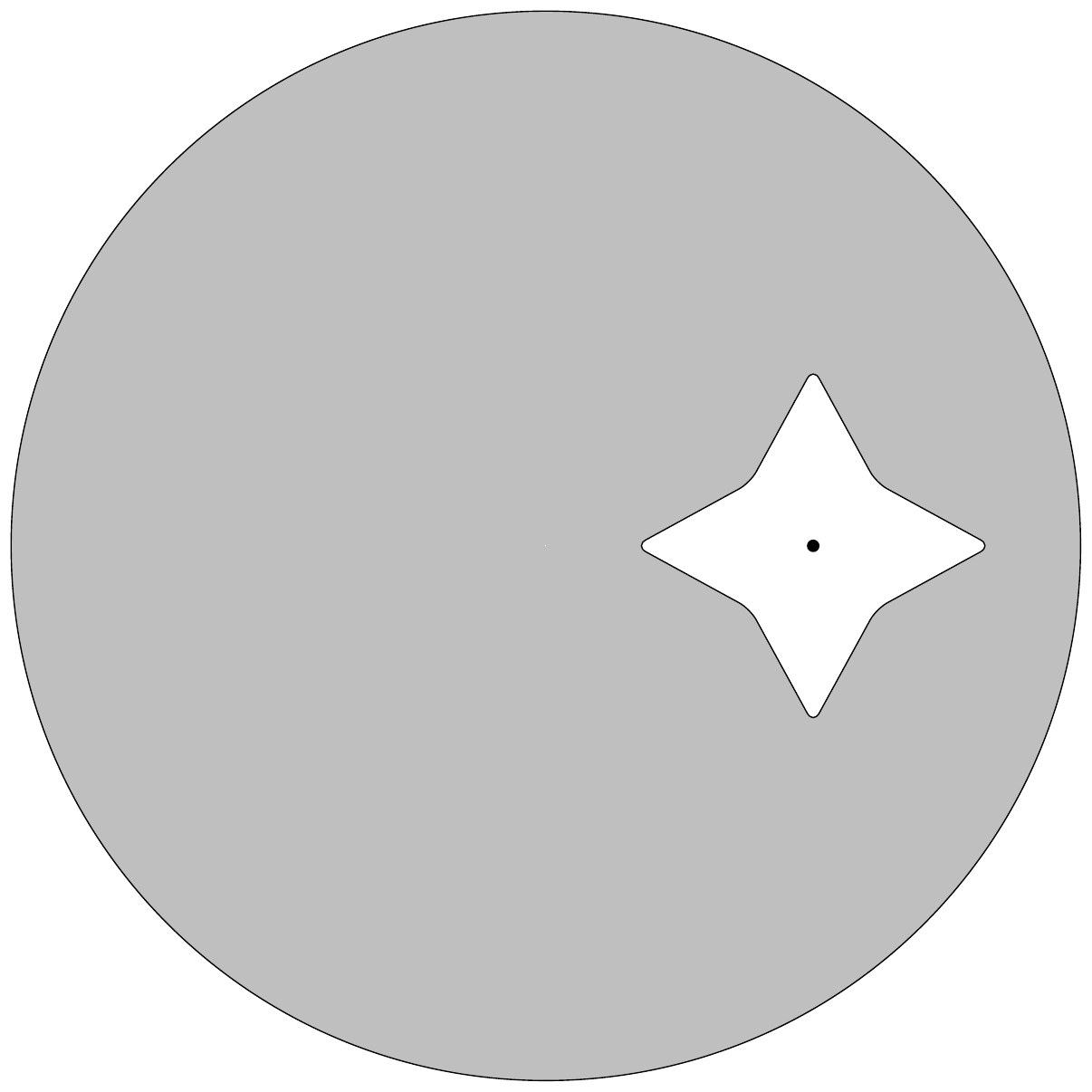}}
\hspace{10mm}
\subfloat[OFF 
]{
\label{Off position_ellipse}
\includegraphics[width=0.1\textwidth]{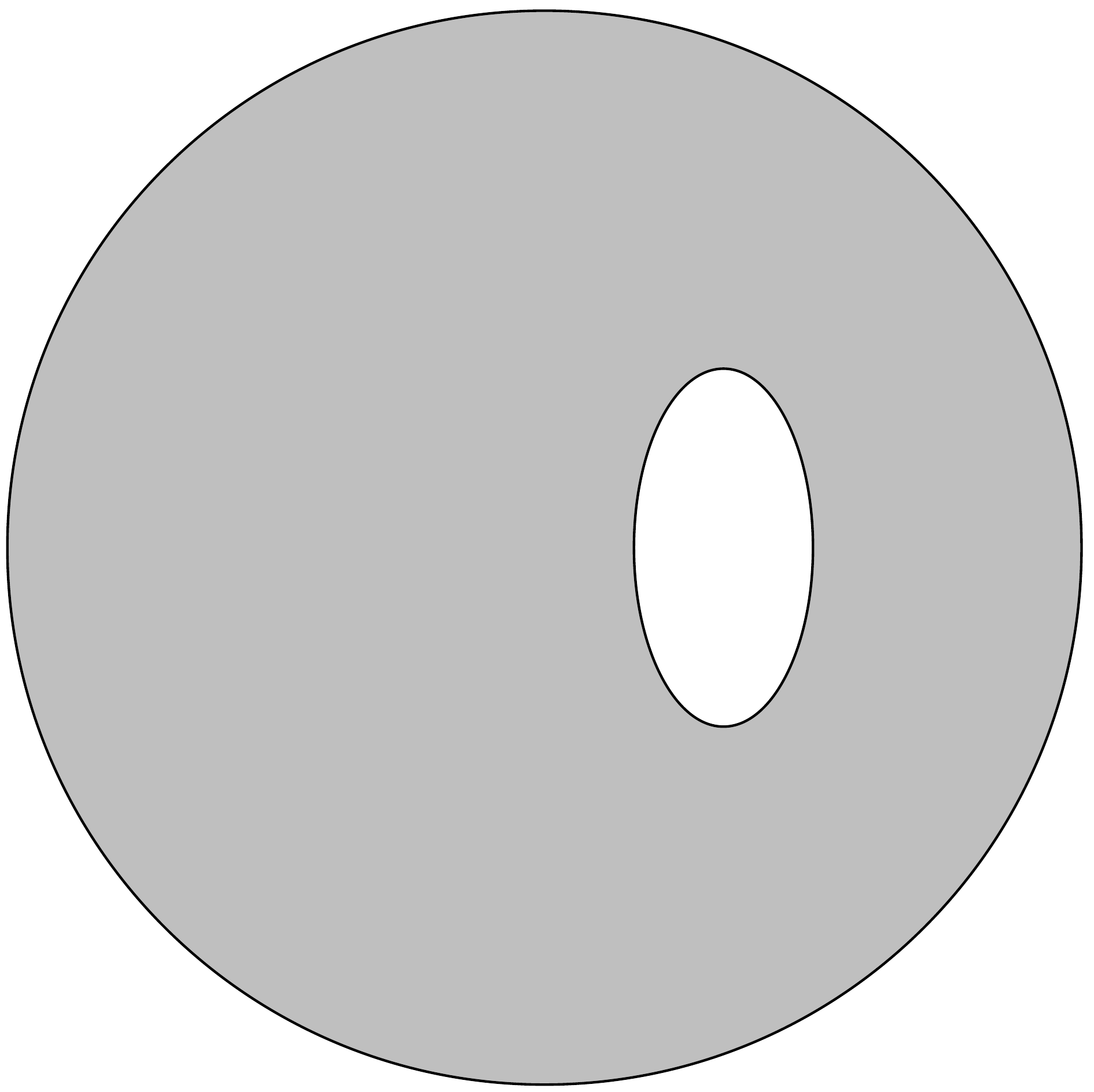}}
\hspace{8mm}
\subfloat[ON 
]{
\label{On position_ellipse}
\includegraphics[width=0.1\textwidth]{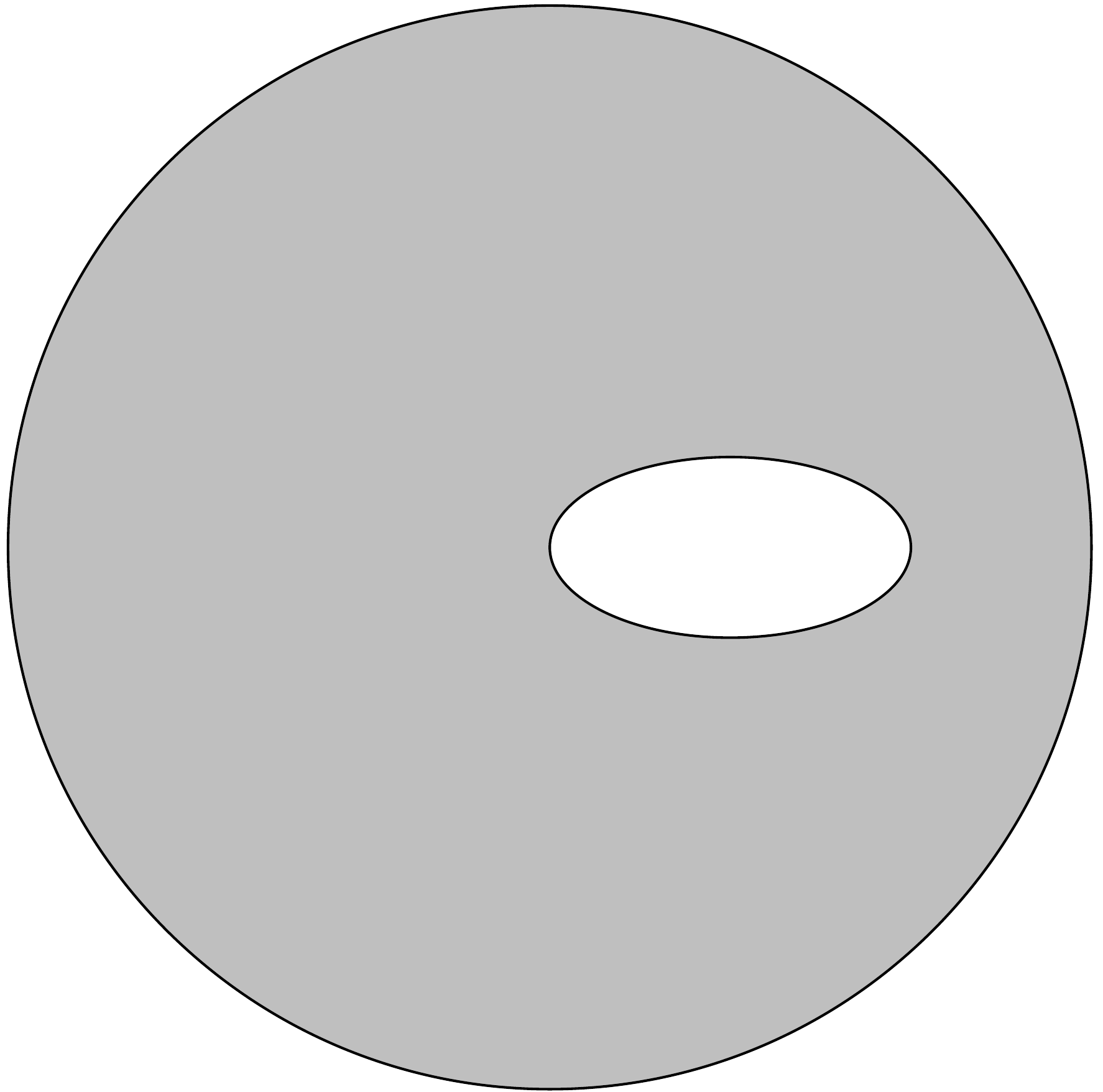}}
\caption{OFF and ON configurations for obstacles having $\mathbb{D}_4$ symmetry}\label{fig:on_off}
\end{figure}

\begin{figure}[H]\centering
\subfloat[OFF 
]{
\label{Off position_triangle}
\includegraphics[width=0.1\textwidth]{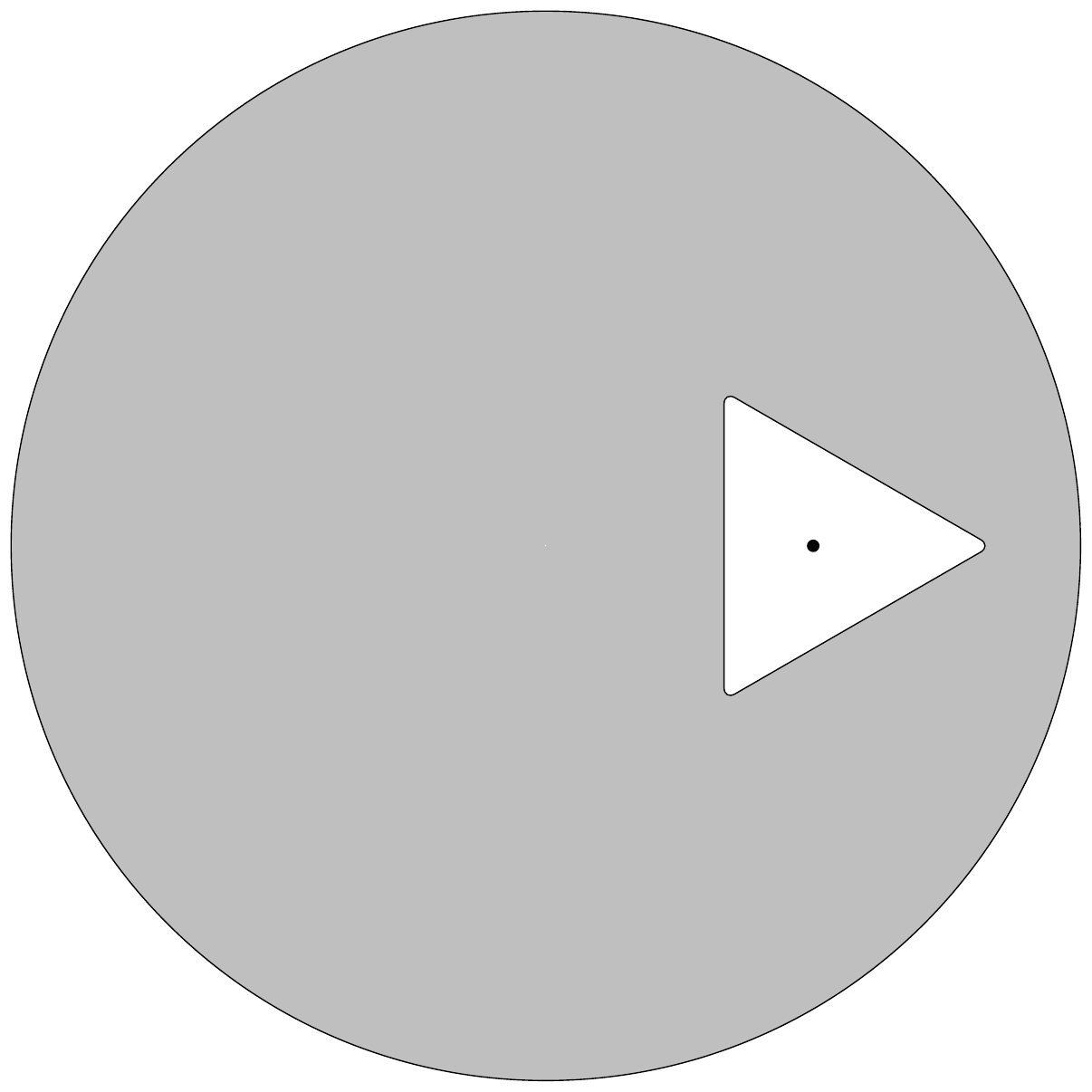}}
\hspace{12mm}
\subfloat[ON 
]{
\label{On position_triangle}
\includegraphics[width=0.1\textwidth]{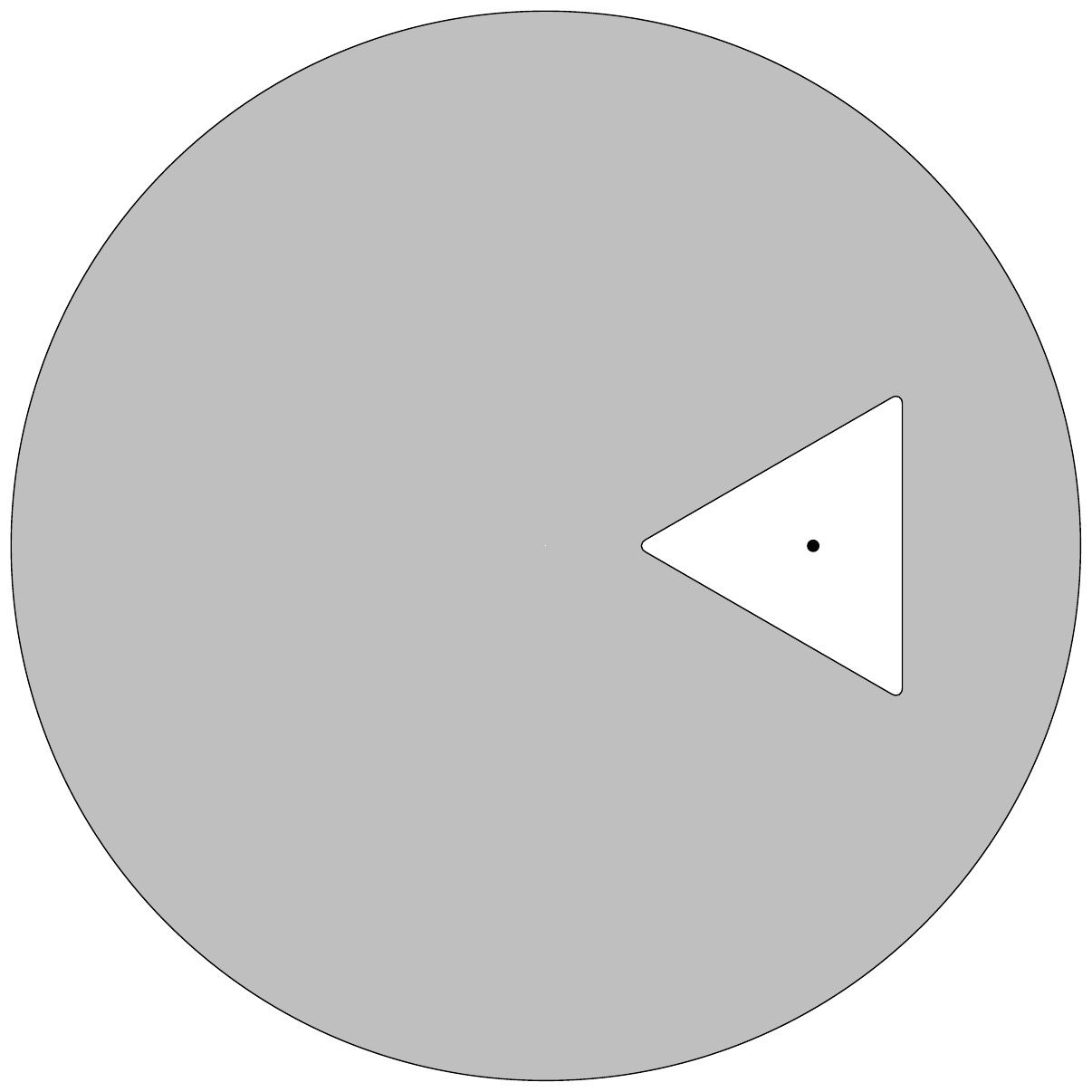}}
\hspace{14mm}
\subfloat[OFF 
]{
\label{Off position_pentagon}
\includegraphics[width=0.1\textwidth]{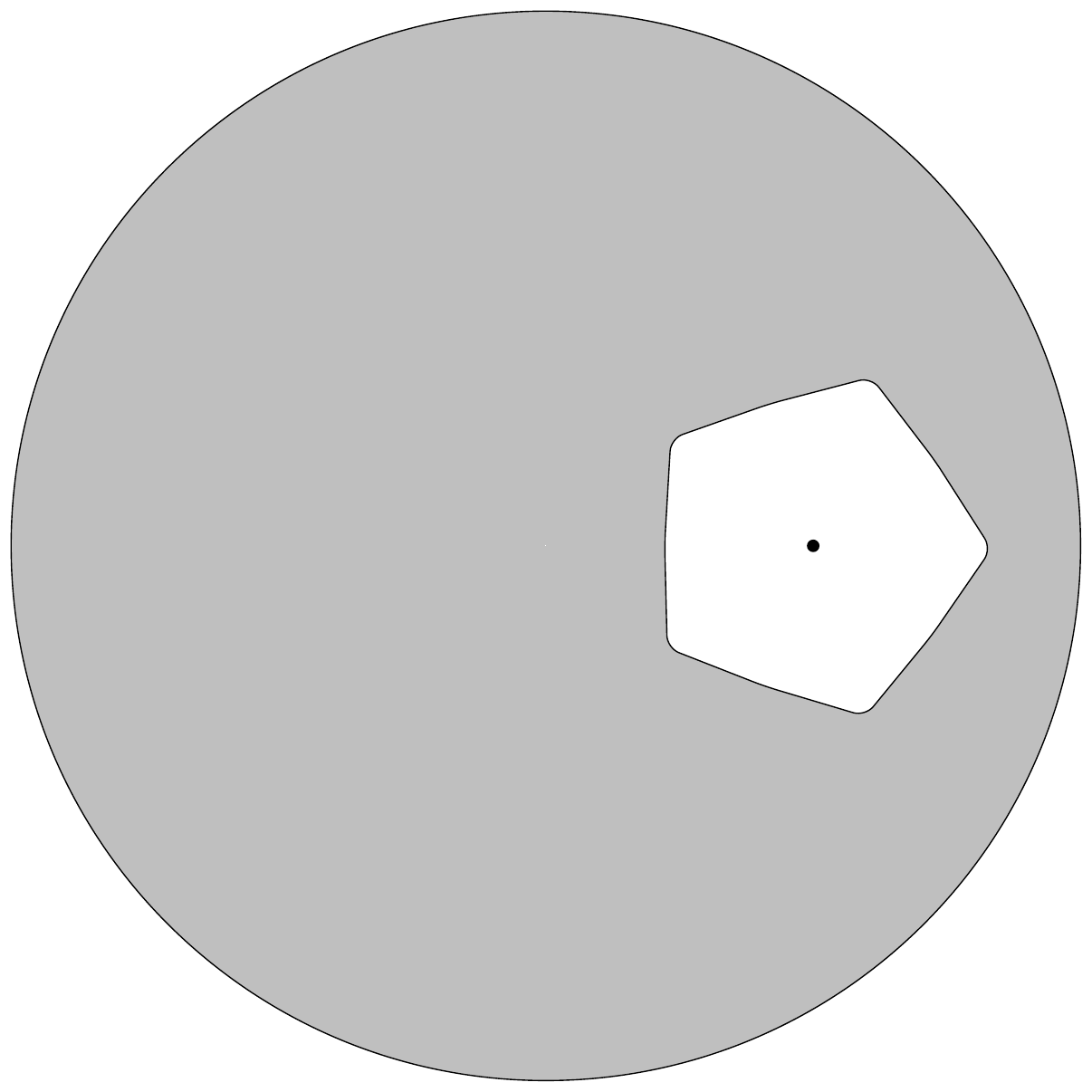}}
\hspace{12mm}
\subfloat[ON 
]{
\label{On position_pentagon}
\includegraphics[width=0.1\textwidth]{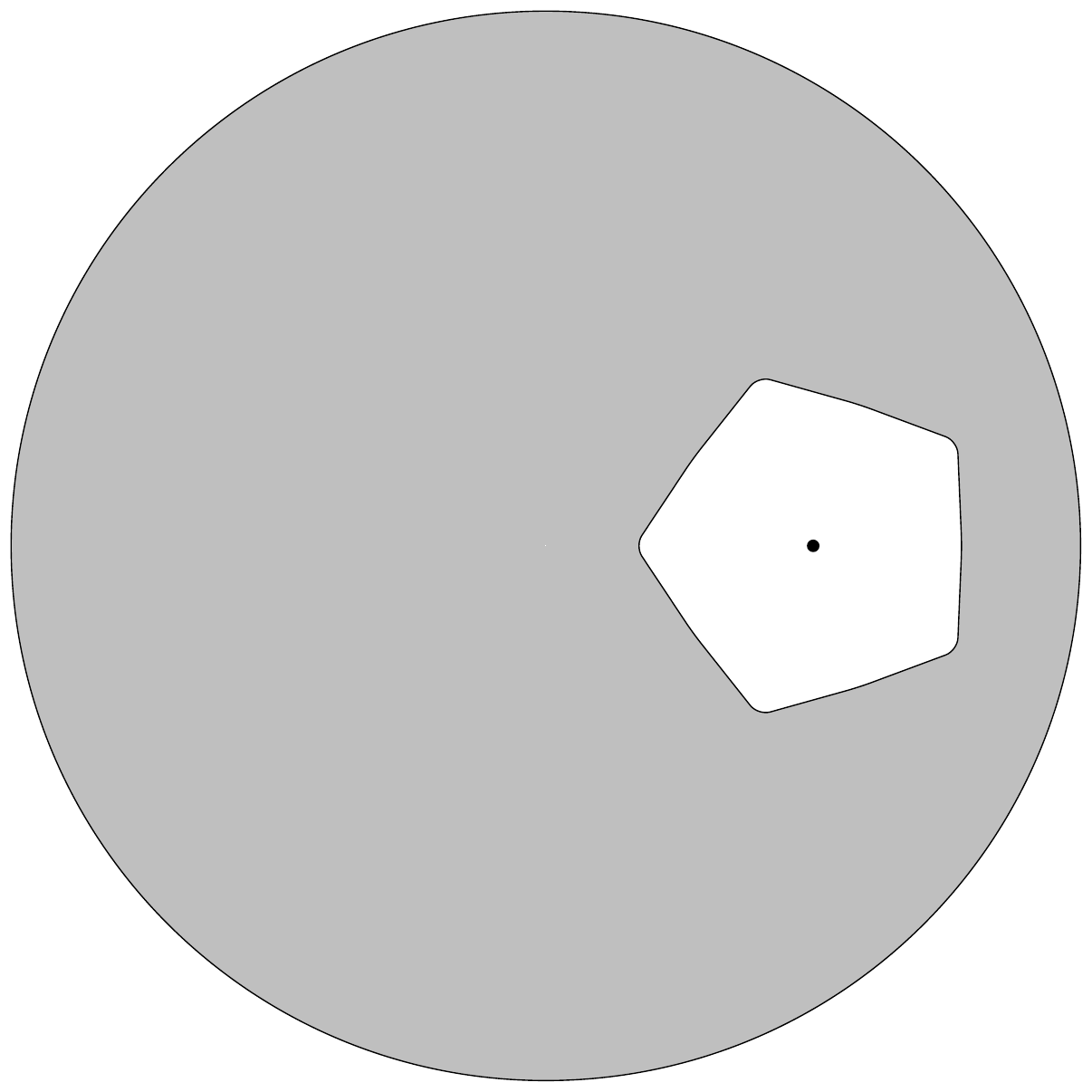}}
\caption{OFF and ON configurations for obstacles having $\mathbb{D}_n$ symmetry, $n$ odd}\label{fig:on_off_odd}
\end{figure}
{
\section{The Main Theorems} \label{stmnt}
We recall here that $P$ is a compact and simply connected subset of $\mathbb{R}^2$ satisfying assumptions (0) to (iv) given in section \ref{assumptions}. 
and that $B$ is an open disk in $\mathbb{R}^2$ of radius $r_1$ such that $B \supset cl(conv(C_2(P)))$. For $t\in\mathbb{R}$, let $\rho_t \in SO(2)$ denote the rotation in $\mathbb{R}^2$ about the origin $\underline{o}$ in the anticlockwise direction by an angle $t$, i.e., for $\zeta \in \mathbb{C} \cong \mathbb{R}^2  $, we have $\rho_t \zeta :=e^{\textbf{i} t} \zeta$. Now, fix $t \in [0, 2\pi[$. 
Let $\Omega_t:= B \setminus \rho_t(P)$ and $\mathcal{F}:= \{\Omega_t \, :\, t \in [0,2 \pi)\}$. 

\begin{theorem}[Extremal configurations w.r.t. the rotations of the obstacle about its fixed center away from the center of the disk]\label{max_min}
Fix $n \in \mathbb{N}, n$ even, $n \geq 3$. The energy functional $E(\Omega_t)$ for $\Omega_t \in \mathcal{F}$ is optimal precisely for those $t \in [0,2\pi[$ for which an axis of symmetry of $P_t$ coincides with a diameter of $B$. 

Among these optimal configurations, the maximizing configurations are the ones corresponding to those $t \in [0,2\pi[$ for which $P_t$ is in an OFF position with respect to $B$; and the minimizing configurations are the ones corresponding to those $t \in [0,2\pi[$ for which $P_t$ is in an ON position with respect to $B$.
\end{theorem}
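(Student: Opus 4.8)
The plan is to study the scalar function $t \mapsto E(\Omega_t)$ through its first variation, the Eulerian derivative computed in Section~\ref{scal}, and to combine three ingredients: a symmetry argument that locates the critical points, the invariance of $E$ under reflections of the configuration, and the moving plane method that fixes the sign of the derivative between consecutive critical points. Writing $u=u_t$ for the solution of (\ref{laplace_equation_lc}) on $\Omega_t$, only the inner boundary $\partial P_t$ moves under the rotation $\rho_t$, and the generating vector field is the rotation field $V(x,y)=(-y,x)$ about $\underline{o}$. The Hadamard-type formula of Section~\ref{scal} then takes the form $E'(t)=-\int_{\partial P_t}(\partial_\nu u)^2\,(V\cdot\nu)\,ds$ (up to the orientation of $\nu$), and since on the star-shaped curve $\partial P_t=\{(r(\phi)\cos\phi, r(\phi)\sin\phi)\}$ one computes $(V\cdot\nu)\,ds=-\,r(\phi)\,r'(\phi)\,d\phi$, this collapses to the convenient polar form
\begin{equation}\label{eq:polarderiv}
E'(t)=\int_0^{2\pi}\Big(\frac{\partial u}{\partial\nu}\Big)^2\,r(\phi)\,r'(\phi)\,d\phi,
\end{equation}
where $r(\cdot)$ is the radial function of $\partial P_t$ about $\underline{o}$, whose sign of $r'$ is controlled, sector by sector, by the monotonicity hypothesis (iv).

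First I would show that every configuration in which an axis of symmetry of $P_t$ meets the diameter $L$ of $B$ through $\underline{o}$ is critical. After rotating coordinates so that $L$ is the $x$-axis, such a configuration makes the whole domain $\Omega_t$ symmetric under the reflection $R(x,y)=(x,-y)$: the line $L$ is then a symmetry axis of $P_t$ and, being a diameter, automatically of $B$. By uniqueness in (\ref{laplace_equation_lc}) the solution satisfies $u\circ R=u$, so $(\partial_\nu u)^2$ is even in $\phi$ while $r$ is even and hence $r'$ is odd; the integrand in (\ref{eq:polarderiv}) is therefore odd and $E'(t)=0$. The same reflection invariance shows more: since $R$ is an isometry fixing $B$, one has $E(\Omega_t)=E(R\,\Omega_t)$, so $t\mapsto E(\Omega_t)$ is symmetric about each such $t$. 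Because $P$ has $\mathbb{D}_n$ symmetry, $E(\Omega_t)$ is $2\pi/n$-periodic, and with $n$ even the $n$ axes split into two congruence classes, through pairs of outer vertices and through pairs of inner vertices respectively, which alternate at angular spacing $\pi/n$; consequently each period contains exactly two symmetric configurations, one ON and one OFF.

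It remains to prove that these are the \emph{only} critical points and to decide which is the maximizer. For this I would run the moving plane method on $\Omega_t$: comparing $u$ with its reflection $u\circ R$ across $L$ on the subdomain where the two are simultaneously defined, the maximum principle together with Hopf's lemma yields a strict one-sided comparison of $\partial_\nu u$ at pairs of points symmetric about $L$, valid whenever $\Omega_t$ is \emph{not} symmetric. Feeding this strict inequality into (\ref{eq:polarderiv}), and using that by condition (iv) $r'$ keeps a fixed sign on each sector delimited by consecutive axes, the weighted integral acquires a definite nonzero sign for every non-symmetric $t$; this simultaneously excludes extra critical points and, by tracking the sign as $t$ sweeps an interval between an ON and the adjacent OFF configuration, shows that $E$ increases from the ON value to the OFF value. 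Equivalently, the orientation in which the outer vertices of $P_t$ point along $L$, so that $P_t$ reaches closest to $\partial B$, maximizes the condenser-type energy $E=M^2\,\mathrm{Cap}(P_t,B)$; that is, the OFF positions are the maxima and the ON positions the minima.

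The main obstacle is precisely this last moving plane comparison. The domain $\Omega_t$ is only partially symmetric, being doubly connected and, off the critical orientations, possessing no symmetry at all, so the reflected solution $u\circ R$ lives on a reflected domain overlapping $\Omega_t$ in a region bounded partly by $\partial P_t$ and partly by its reflection; setting up the comparison on the correct cap, verifying via the star-shapedness of condition (iv) that the relevant boundary pieces are ordered so that the maximum principle applies, and extracting from Hopf's lemma a strict rather than merely weak inequality between $(\partial_\nu u)^2$ at reflected points, is the delicate step. A secondary difficulty is bookkeeping the sign of $r'$ through the sectors so that the contributions in (\ref{eq:polarderiv}) reinforce rather than cancel; this is where the evenness of $n$, which makes the two ends of each aligned axis the same type of vertex, is essential, and is also what forces the separate treatment of the odd case announced in Section~\ref{stmnt}.
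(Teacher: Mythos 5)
Your outline reproduces the paper's skeleton up to and including the identification of the critical points: the Hadamard formula in the polar form $E'(t)\propto\int_0^{2\pi}(\partial_\nu u)^2\,r(\phi)\,r'(\phi)\,d\phi$ is exactly what Lemma \ref{properties_eta}(ii) gives, the evenness/periodicity reduction to $[0,\pi/n]$ is equation (\ref{even_lambda}), and the oddness-of-the-integrand argument at symmetric configurations is Proposition \ref{critical_points}. The gap is in the decisive step (the analogue of Proposition \ref{complete_critical_points}): you run the comparison $w=u-u\circ R$ with $R$ the reflection across the diameter $L$ of $B$ through $\underline{o}$. Under that reflection $R(B)=B$ but $R(P_t)\neq P_t$ for non-symmetric $t$, so the overlap region is $B\setminus\bigl(P_t\cup R(P_t)\bigr)$ and on its boundary $w$ has \emph{both} signs: $w=-u(Rx)\le 0$ on $\partial P_t\setminus R(P_t)$, while $w=u(x)\ge 0$ on $R(\partial P_t)\setminus P_t$ (and $w=0$ on $\partial B$). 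Neither obstacle contains the other in the relevant cap for a generic rotation of a $\mathbb{D}_n$-symmetric set, so the maximum principle yields no sign for $w$ and Hopf's lemma cannot be invoked; the "delicate step" you flag is not merely delicate, it fails as set up. Moreover, pairing $\phi$ with $-\phi$ does not pair points where $r'(\phi)=f'(\phi-t)$ has opposite signs, so even granting a comparison of $(\partial_\nu u)^2$ at $L$-symmetric points, the weighted contributions in your formula would not combine with a definite sign.

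The paper avoids this by reflecting about the \emph{axes of symmetry of $P_t$} rather than the diameter of $B$ (the ``sector reflection technique''). There $R(P_t)=P_t$, so $w(x)=u(x)-u(x')$ vanishes identically on $\partial P_t$ and on the reflecting axis, and the containment Lemma \ref{containment_1} (which rests on the monotonicity of the radial function of $\partial B$ about $\underline{o}$, Lemma \ref{bound_monot}) guarantees that the reflected image of $\partial B\cap\partial H_1^k(t)$ lands strictly inside $\Omega_t$, where $u<M$; hence $w\ge 0$ on all of $\partial H(t)$, the maximum principle and Hopf's lemma apply, and one obtains the strict inequality (\ref{drvsgn1}). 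The sign of $\left<\eta_t,v\right>$ on each arc is then supplied by assumption (iv) via Lemma \ref{properties_eta}(ii), and equations (\ref{ref_1})--(\ref{inrprdctsgn2}) assemble the $2n$ arcs into pairs with reinforcing signs. This pairing of \emph{consecutive} sectors within each half-plane is where evenness of $n$ enters (an even number of sectors above and below the $x_1$-axis), not the fact that the two ends of an aligned axis carry the same type of vertex. To repair your argument you should replace the reflection across $L$ by the family of reflections $R_{t+(k+1)\pi/n}$ and prove the containment statement for the sectors of $\Omega_t$.
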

{Equation (\ref{even_lambda}), Propositions \ref{critical_points} and \ref{complete_critical_points} imply Theorem \ref{max_min} for $n$ even, $n \in \mathbb{N}$, $n \geq 3$. For the $n$ odd case, we identify some of the extremal configuration for $E$. We prove that equation (\ref{even_lambda}) and Proposition \ref{critical_points} hold true for $n$ odd too. 
We provide numerical evidence for $n=5$ and conjecture that Proposition \ref{complete_critical_points}, and hence, Theorem \ref{max_min} hold true for $n$ odd too.}

Let $r_0^1$ and $r_0^2$ denote the radii of the incircle $C_1$ and the circumcircle $C_2$ of the obstacle $P$ respectively. Let $P_{(d,t)}$ be the obstacle $P_t$ as in Theorem \ref{max_min} with its center $\underline{o}$ at a distance $d < r_1-r_0^2$ from the center of $B$. Please note that, in Theorem \ref{max_min}, $d$ is a fixed number in $(0, r_1-r_0^2)$. 
This was because, for the case $d=0$, $t \longmapsto E(\Omega_t)$ is a constant map. This can be seen as follows. When $d=0$ we have  (a) $\Omega_t $ is isometric to $\Omega_s$ for each $t,s \in \mathbb{R}$, and (b) since the boundary data on each of the boundary component is radial w.r.t. the center of $B$ we get the solution $y$ of (\ref{laplace_equation_lc}) satisfies $y(\Omega_t) =y(\Omega_s)$  for each $t,s \in \mathbb{R}$. 
 
Since we want to study the behaviour of $E$ w.r.t. the translations of the obstacle too, we now allow $d$ to be $0$. Let $\Omega_{(d,t)}:= B \setminus P_{(d,t)}$ for $d \in [0, r_1-r_o^2)$, $t \in [0, 2\pi[$. Let $E((d,t)):=E(\Omega_{(d,t)})$. Let $\mathcal{G}$ be defined as $\{\Omega_{(d,t)} \, :\, (d,t) \in [0,r_1-r_0^2[ \times [0,2\pi[\}$. 
%
%
%
\begin{theorem}[Global extremal configurations, i.e., extremal configurations w.r.t. the translations and rotations of the obstacle within $B$]  \label{global}
 Fix $n \geq 3$, $n \in \mathbb{N}$. The concentric configuration, i.e., $\Omega_{(0, t)}$, for any $t \in [0,2\pi[$, is the minimising configuration for $E((d,t))$ over $\mathcal{G}$. At a maximising configuration for $E(d,t)$ over $\mathcal{G}$, the circumcircle of the obstacle must intersect or touch $\partial B$. 

For $n$ even, $n \geq 3$, $n \in \mathbb{N}$, we further have that, at the maximizing configuration over $\mathcal{G}$, the obstacle must be in an OFF position w.r.t. $B$. 

 The concentric configurations are the global minimising configurations w.r.t all the translations and all rotations of the obstacle within $B$, for each natural number $n \geq 3$. 
 The OFF configurations with an outer vertex touching $\partial B$ 
 are the global maximising configurations w.r.t. all the translations and rotations of the obstacle within $B$, for each even natural number $n \geq 3$. 
\end{theorem}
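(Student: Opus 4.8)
The plan is to deduce the final statement from the three assertions already contained in Theorem~\ref{global}, together with the rotational classification of Theorem~\ref{max_min}, reserving the genuine analytic work for those earlier parts. Write $O$ for the centre of $B$, take the diameter of $B$ through $O$ and the obstacle centre $\underline{o}$ as the reference axis, and let $\hat r$ be the unit vector along it pointing away from $O$; by the rotational invariance of $B$ the energy $E((d,t))$ depends only on $d$ and on the orientation $t$ of $P$ relative to this axis.

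For the minimum I would argue as follows. The first assertion of the theorem gives $E((d,t))\ge E((0,t))$ for every $(d,t)\in\mathcal{G}$, with equality precisely when $d=0$. At $d=0$ the domains $\Omega_{(0,t)}$ are mutually isometric and carry rotation-invariant data on both boundary components, so (as already recorded after the statement of Theorem~\ref{max_min}) the solution and hence $E((0,t))$ are independent of $t$; call this common value $m$. Thus every concentric configuration realises the value $m$ and, by the strict monotonicity for $d>0$, nothing else does. Hence the concentric configurations are exactly the global minimisers. Notice this uses no information about rotations, which is why it holds for every $n\ge 3$.

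For the maximum (now with $n$ even) I would first secure existence by continuity: $E$ extends continuously to the compact parameter set obtained by closing up the $d$-range to $[0,\,r_1-r_0^2]$, so a maximiser exists. By the second assertion, at any maximiser the circumcircle $C_2$ meets $\partial B$; over the closed $d$-range this happens only at the internally tangent value $d=r_1-r_0^2$, where $d+r_0^2=r_1$. By the third assertion—this is where the even-$n$ hypothesis enters—a maximiser is in an OFF position. There the outer vertex lying on the reference axis sits at distance $d+r_0^2=r_1$ from $O$ and therefore touches $\partial B$. Consequently every maximiser is an OFF configuration with an outer vertex touching $\partial B$; conversely all such configurations are mutually congruent under the symmetries of $B$ and of $P$, so each realises the maximal value and they are precisely the global maximisers.

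The two assertions I invoked rest on the shape calculus of Section~\ref{scal}: with the velocity field $V=\hat r$ (a rigid radial displacement of $P$, cut off near $\partial B$) the Eulerian derivative reduces to
\[
\frac{\partial E}{\partial d}=\int_{\partial P}(\hat r\cdot\nu)\Bigl(\frac{\partial u}{\partial\nu}\Bigr)^{2}\,d\sigma ,
\]
with $\nu$ the outward normal of $P$, and the deep point in those assertions is signing this integral—done by comparing $u$ with its reflection across the diameter of $B$ orthogonal to $\hat r$ and applying the strong maximum principle and Hopf's lemma, where the disk geometry of $B$ and condition (iv) on $\partial P$ are essential (the moving-plane method). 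Granting those inputs, the main obstacle in the present deduction is the attainment argument: one must check that $E$ remains finite and varies continuously as the smooth outer vertex becomes internally tangent to $\partial B$ in the limit $d\uparrow r_1-r_0^2$, so that the supremum is genuinely realised by the tangent OFF configuration. Finally, the even-$n$ restriction in the maximiser statement is inherited solely from Theorem~\ref{max_min}, whose odd-$n$ analogue is as yet only conjectural.
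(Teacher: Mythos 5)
Your final-assembly logic is essentially the paper's: the paper also obtains Theorem \ref{global} by combining (i) monotonicity of $E$ in the translation parameter $d$, derived from the interior-reflection theorem of Harrell--Kr\"oger--Kurata (its Theorem 2.1, restated here as Theorem \ref{provedinHarrell} and Corollaries \ref{notproved}--\ref{MC}), with (ii) the rotational classification of Theorem \ref{max_min}, to conclude that the concentric configuration is the global minimiser and that a global maximiser is an OFF configuration with the circumcircle (hence an outer vertex) meeting $\partial B$. Your explicit attainment argument on the closed parameter range $d\in[0,r_1-r_0^2]$ is actually more careful than the paper, which leaves the limiting tangent configuration implicit. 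However, your proof is circular where it matters: the first three assertions of Theorem \ref{global} are themselves part of the statement to be proved, and you take them as given, ``reserving the genuine analytic work for those earlier parts.'' The content of the theorem \emph{is} that analytic work; what remains after assuming it is only bookkeeping.

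The sketch you do offer of that work contains a step that would fail. You propose to sign the translational derivative ``by comparing $u$ with its reflection across the diameter of $B$ orthogonal to $\hat r$.'' That is the wrong hyperplane: when $d>0$ a diameter of $B$ does not preserve $P$, so the comparison function $w(x)=u(x)-u(Rx)$ would not vanish on $\partial P$ and the maximum principle/Hopf argument collapses. The moving-plane comparison must be made across a line $L$ through the centre of $P$ about which $P$ is reflection-symmetric and which is perpendicular to the translation direction; it is $B$ (not $P$) that fails to be invariant under this reflection, and the interior-reflection property of $\Omega=B\setminus P$ with respect to $L$ is exactly what makes $w\ge 0$ on the boundary of the smaller component. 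This is how Theorem \ref{provedinHarrell} and Corollary \ref{notproved} are actually used in Section \ref{secGlobal}. Your displayed formula for $\partial E/\partial d$ also drops the factor $-2$ and the sign from (\ref{EulerianDerivative}); since the whole point of the argument is to determine a sign, this is not cosmetic. Finally, note that for a generic fixed orientation $t$ no axis of symmetry of $P$ need be perpendicular to $\hat r$, which is why the paper routes the minimiser claim through the statement that a minimiser admits \emph{no} hyperplane of interior reflection (impossible for $d>0$ since $n\ge 3$ supplies at least two axes of symmetry of $P$ that are not diameters of $B$), rather than through literal monotonicity along every radial ray.
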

Let us study the behaviour of $E$ w.r.t. the expansion or contractions of the obstacle too. For this we fix $d \geq 0$ and $t \in [0, 2 \pi)$ such that $P_{(d,t)}$ lies completely inside $B$. We now scale $P=P_{(d,t)}$ by a scaling factor $\lambda >0$, $ \lambda \in \mathbb{R}$, 
such that $\lambda \, P_{(d, t)} := \{\lambda \, x \,|\, x \in P_{(d, t)} \}$ still lies completely inside $B$. Let $\Omega_{\lambda}:= B \setminus \lambda \, P_{(d,t)}$. 
Let $E(\lambda):=E(\Omega_{\lambda})$. Let $\mathcal{H}$ be defined as $\{\Omega_{\lambda} \, :\, \lambda>0, \lambda \in \mathbb{R}, 
\lambda \, P \subset B\}$. We denote the largest admissible value of $\lambda$ by $\lambda_P$ which depends on $d$ and $t$ both.
\begin{theorem}[Behaviour of the energy functional w.r.t. the scaling of the obstacle inside $B$]  \label{expansion}
 Fix $n \in \mathbb{N}, n\geq 3$. The energy functional strictly increases as the obstacle $P$ expands inside the disk $B$ and strictly decreases as the obstacle $P$ contracts inside $B$. Further, the energy functional approaches its minimum value as the obstacle $P$ shrinks to a point. 


\end{theorem}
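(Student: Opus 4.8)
The plan is to reduce the statement to two separate facts: the strict monotonicity of $\lambda \mapsto E(\lambda)$ along the dilation family, and an asymptotic estimate of $E(\lambda)$ as $\lambda \to 0^+$. First I would normalise the solution by writing $u = M\,v$, so that $v$ is harmonic on $\Omega_\lambda$ with $v=0$ on $\partial(\lambda P)$ and $v=1$ on $\partial B$, whence $E(\lambda) = M^2 \int_{\Omega_\lambda}\|\nabla v\|^2\,dx = M^2\,\mathrm{cap}(\lambda P;B)$ is, up to the factor $M^2$, the condenser capacity of $\lambda P$ inside $B$. Since $P$ is star--shaped with respect to its centre $\underline{o}=(0,0)$ (a consequence of assumptions (0)--(iv)) and the scaling is a dilation about $\underline{o}$, the obstacles are nested: $\lambda_1 < \lambda_2$ forces $\lambda_1 P \subset \lambda_2 P$, hence $\Omega_{\lambda_2}\subset\Omega_{\lambda_1}$ and $\mathrm{cap}(\lambda_1 P;B)\le \mathrm{cap}(\lambda_2 P;B)$ by the variational characterisation of capacity. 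This already gives monotonicity in the weak sense; the real work is to make it strict and to compute the limit.

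To upgrade to strict monotonicity I would invoke the Eulerian derivative of $E$ derived in section \ref{scal}. The scaling family is the flow of the dilation field $V(x)=x$ (after the harmless reparametrisation $\lambda = e^{s}$, for which $\tfrac{dE}{ds}=\lambda\,\tfrac{dE}{d\lambda}$ and $\lambda>0$). The outer boundary $\partial B$ is fixed, so $V\equiv 0$ there and contributes nothing; on the inner boundary $\partial(\lambda P)$ one has $u=0$, so $\nabla u$ is purely normal and $\|\nabla u\|^2=(\partial_\nu u)^2$. Feeding this into the Hadamard--type formula from section \ref{scal} collapses the Eulerian derivative to the single boundary integral $E'(\Omega_\lambda;V) = -\int_{\partial(\lambda P)}(\partial_\nu u)^2\,(V\cdot\nu)\,ds$. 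Since $\nu$ is the outward normal of $\Omega_\lambda$ it points into the obstacle, and star--shapedness gives $x\cdot\nu<0$ on $\partial(\lambda P)$, so $E'(\Omega_\lambda;V)=\int_{\partial(\lambda P)}(\partial_\nu u)^2\,|x\cdot\nu|\,ds$ and therefore $\tfrac{dE}{d\lambda}=\tfrac1\lambda\int_{\partial(\lambda P)}(\partial_\nu u)^2\,|x\cdot\nu|\,ds$. This integral is strictly positive because $v$, and hence $u$, is a nonconstant harmonic function vanishing on $\partial(\lambda P)$, so Hopf's lemma forces $\partial_\nu u\ne 0$ there. Thus $\tfrac{dE}{d\lambda}>0$ for every admissible $\lambda$, which is exactly the assertion that $E$ strictly increases under expansion and strictly decreases under contraction.

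For the limit, strict monotonicity together with $E\ge 0$ shows that $E(\lambda)\downarrow \inf_{\lambda>0}E(\lambda)$ as $\lambda\to 0^+$, and it remains to identify this infimum as the minimum value $0$, attained by the obstacle--free disc (where $v\equiv 1$, $u\equiv M$ and $E=0$). I would bound the capacity above by a test function: the circumcircle condition gives $\lambda P\subset D(\underline{o},\lambda r_0^2)$, and for any fixed $R_0 < r_1 - d$ and $\lambda$ small enough that $\lambda r_0^2 < R_0$, the logarithmic cutoff equal to $1$ on $D(\underline{o},\lambda r_0^2)$, equal to $\tfrac{\ln(R_0/|x|)}{\ln(R_0/(\lambda r_0^2))}$ on the annulus, and $0$ outside $D(\underline{o},R_0)\subset B$, is an admissible competitor with Dirichlet energy $2\pi/\ln\!\big(R_0/(\lambda r_0^2)\big)$. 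Hence $0\le \mathrm{cap}(\lambda P;B)\le 2\pi/\ln\big(R_0/(\lambda r_0^2)\big)\to 0$, so $E(\lambda)\to 0$; combined with $E\ge 0$ and the value $0$ for the empty obstacle, this shows that $E$ approaches its minimum value as $P$ shrinks to a point.

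The main obstacle is the rigorous justification of the Eulerian derivative formula and its strict positivity: one must use $u\in\mathcal{C}^\infty(\overline{\Omega_\lambda})$ (guaranteed by Theorem 8.14 of \cite{Gilberg-Trudinger}) so that the boundary integral is meaningful, and then combine Hopf's lemma with the strict star--shapedness built into assumptions (0)--(iv) to keep the integrand from vanishing identically. A secondary point worth flagging is that the minimum value $0$ is an infimum approached in the limit and is not attained at any $\lambda>0$, which is precisely why the theorem is phrased as ``approaches its minimum value.''
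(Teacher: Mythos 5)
Your argument for strict monotonicity is essentially the paper's own: the paper also applies the Eulerian derivative formula with the dilation field (written there as $V(x)=\lambda\,\rho(x)\,x$, with the cutoff $\rho$ making $V$ vanish near $\partial B$ --- you use this tacitly when you assert $V\equiv 0$ on $\partial B$), computes $\left<x,n(x)\right> = -\|x\|^2/\sqrt{f^2(\phi)+[f'(\phi)]^2}<0$ from the polar parametrisation of $\partial P$ via Lemma 4.2 of \cite{Anisa-Souvik} (your appeal to star--shapedness is the same fact), and concludes $E'(\lambda)>0$ from the Hopf lemma. Your dropped factor of $2$ in the Hadamard formula is harmless since only the sign matters. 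Where you genuinely diverge is the shrinking limit: the paper merely observes that strict monotonicity gives $E(\lambda)\downarrow\inf E$ and that $E'(\lambda)\to 0$ as $\lambda\to 0$, and relegates the identification of the limiting value to a remark; you instead rewrite $E(\lambda)=M^2\,\mathrm{cap}(\lambda P;B)$ and exhibit the explicit logarithmic competitor on an annulus to prove $E(\lambda)\le 2\pi M^2/\ln\bigl(R_0/(\lambda r_0^2)\bigr)\to 0$. This buys you a quantitative statement that the infimum is exactly $0$ (the energy of the hole--free disc) rather than just that the infimum is approached, and it is the more complete treatment of the second assertion of the theorem; the capacity/nestedness observation at the start is also a nice sanity check, though it is logically subsumed by the derivative computation.
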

\begin{remark} 
\begin{itemize} 
\item[(a)] It's worth mentioning here that the BVP  (\ref{laplace_equation_lc}) does not admit any solution when $P$ shrinks to the center of $B$. 
Also, a harmonic function on any planar disk (without any hole or puncture) with constant boundary condition always has zero energy. 
\item[(b)] Please note that, from the expression of the energy functional in (\ref{M}) and making use of the Hopf lemma, it is easy to see that the energy functional associated to (\ref{laplace_equation_lc}) increases as $|M|$, the absolute value of the boundary data, increases. In fact, for each $\alpha \in \mathbb{R}$, the energy functional scales by a factor of $\alpha^2$ as the value of the boundary data $g$ scales from $M$ to $\alpha \, M$. This follows from the fact that if $u$ is a solution of (\ref{laplace_equation_lc}) corresponding to the boundary data $g=M$, then $\alpha\, u$ becomes the solution of (\ref{laplace_equation_lc}) corresponding to the boundary data $g=\alpha \, M$, $\alpha \in \mathbb{R}$. 
\end{itemize}
\end{remark}
\section{Shape calculus}\label{scal}
\subsection{Existence of shape derivatives}\label{ExistSD} 
Let $D$ be a given domain in $\mathbb{R}^N$. Let $\Omega$ be a domain of class $\mathcal{C}^k$ in $D$. Let $V \in \mathcal{C}(0, \epsilon; \mathcal{D}^k(D; \mathbb{R}^N))$ be a vector field. 
Consider the following Dirichlet boundary value problem:
\begin{equation}\label{BVP119}
\begin{aligned}
-\Delta y(\Omega) &= h(\Omega) &\mbox{ in } & L^2(\Omega),\\
y(\Omega) &= z(\Gamma) &\mbox{ on } & \Gamma. 
\end{aligned}
\end{equation}
Proposition 3.1 on page 119 of \cite{Sokolowski_Zolesio} says that
\begin{proposition}\label{page119} Let $(h(\Omega), z(\Gamma)) \in L^2(\Omega) \times H^{\frac{1}{2}}(\Gamma)$ be given elements such that there exists the shape derivatives $(h'(\Omega)$, $z'(\Gamma))$ in  $L^2(\Omega) \times H^{\frac{1}{2}}(\Gamma)$. Then the solution $y(\Omega)$ to the Dirichlet boundary value problem (\ref{BVP119}) has the shape derivative $y^\prime(\Omega, V)$ in $H^1(\Omega)$ determined as the unique solution to the Dirichlet boundary value problem (\ref{1192}).
\begin{equation}\label{1192}
\begin{aligned}
-\Delta y^\prime(\Omega, V) &=h^\prime(\Omega, V) & \mbox{ in } & \mathcal{D}^\prime(\Omega),\\
y^\prime(\Omega, V)\arrowvert_{\Gamma} &=- \dfrac{\partial y}{\partial n} \left<V(0),n\right> + z^\prime(\Gamma, V)& \mbox{ on } & \partial \Omega,\\
\end{aligned}
\end{equation}
\end{proposition}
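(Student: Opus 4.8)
The plan is to use the velocity (flow) method of Sokolowski and Zolesio. Let $T_t$ denote the flow of the field $V$, so that $\Omega_t := T_t(\Omega)$ with $\Omega_0 = \Omega$ and $\Gamma_t := T_t(\Gamma)$, and let $y_t := y(\Omega_t)$ solve (\ref{BVP119}) on $\Omega_t$ with data $h_t := h(\Omega_t)$ and $z_t := z(\Gamma_t)$. Throughout I would work with the material derivative $\dot{y} = \frac{d}{dt}\bigl(y_t \circ T_t\bigr)\big|_{t=0}$ and define the shape derivative through the interior relation $y'(\Omega,V) = \dot{y} - \nabla y \cdot V(0)$, the goal being to show that this $y'$ solves (\ref{1192}) and is the unique such solution.

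First I would establish existence of the material derivative $\dot y$ in $H^1(\Omega)$. Writing the weak form of (\ref{BVP119}) on $\Omega_t$ and pulling it back to the fixed reference domain $\Omega$ via $T_t$ converts it into a family of variational problems $a_t(y_t \circ T_t, \varphi) = \ell_t(\varphi)$ on $\Omega$, whose bilinear form $a_t$ carries the Jacobian $\det DT_t$ and the transported metric $(DT_t)^{-1}(DT_t)^{-T}$. Because $V \in \mathcal{C}(0,\epsilon; \mathcal{D}^k(D;\mathbb{R}^N))$, these coefficients are differentiable in $t$; the forms $a_t$ stay uniformly coercive for small $t$; and since the shape derivatives $(h', z')$ are assumed to exist, the right-hand side $t \mapsto \ell_t$ is differentiable too. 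Differentiability of the solution of a uniformly well-posed, smoothly $t$-dependent variational problem (a Lax-Milgram / implicit-function argument) then yields $\dot y \in H^1(\Omega)$. The interior equation follows at once: for $x$ in the interior of $\Omega$ and $t$ small we have $x \in \Omega_t$ and $-\Delta y_t(x) = h_t(x)$, and since $\Delta$ is local the domain perturbation is invisible at $x$, so differentiating at fixed $x$ gives $-\Delta y'(x) = h'(\Omega,V)(x)$ in $\mathcal{D}'(\Omega)$, the first line of (\ref{1192}).

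The boundary condition is the delicate part. On $\Gamma_t$ one has $y_t = z_t$, i.e.\ $y_t(T_t(X)) = z_t(T_t(X))$ for $X \in \Gamma$; differentiating in $t$ at $t=0$ gives the identity of material derivatives $\dot{y} = \dot{z}$ on $\Gamma$. I would then substitute $\dot y = y' + \nabla y \cdot V(0)$ and decompose $V(0) = \langle V(0), n\rangle\, n + V_\tau$ into its normal and tangential parts, using $y = z$ on $\Gamma$ to rewrite the tangential contribution as $\nabla y \cdot V_\tau = \nabla_\Gamma z \cdot V_\tau$, together with the surface relation $\dot z = z'(\Gamma, V) + \nabla_\Gamma z \cdot V_\tau$. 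The two tangential terms cancel, leaving $y'|_\Gamma = -\frac{\partial y}{\partial n}\langle V(0), n\rangle + z'(\Gamma, V)$, the second line of (\ref{1192}). Uniqueness of $y'$ is then immediate, since (\ref{1192}) is a linear Dirichlet problem with an $L^2$ source and an $H^{1/2}(\Gamma)$ boundary datum, hence uniquely solvable in $H^1(\Omega)$.

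The main obstacle I expect is the second step: rigorously justifying the $t$-differentiability and the $H^1$-convergence of the transported solution $y_t \circ T_t$, since this is precisely where the regularity of $V$, the coercivity uniform in $t$, and the existence of $(h', z')$ are all simultaneously needed. A subsidiary technical point is ensuring that $y$ is regular enough, via elliptic regularity given sufficiently smooth data, that the normal trace $\frac{\partial y}{\partial n}$ appearing in the boundary condition is well defined; once that is secured the tangential cancellation in the boundary computation is purely formal.
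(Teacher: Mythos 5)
The paper does not actually prove this proposition: it is quoted verbatim as Proposition 3.1 on page 119 of \cite{Sokolowski_Zolesio}, and that citation is the paper's entire justification. Your outline — pulling the weak form back along the flow $T_t$ to get the material derivative via a uniformly coercive, $t$-differentiable Lax--Milgram problem, using locality of $\Delta$ for the interior equation, and splitting $V(0)$ into normal and tangential parts on $\Gamma$ so that the tangential contributions cancel against $\dot z - z'$ — is precisely the standard velocity-method derivation given in that reference, and your boundary computation yielding $y'|_\Gamma = -\frac{\partial y}{\partial n}\langle V(0),n\rangle + z'(\Gamma,V)$ is carried out correctly.
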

Please note that, for us the vector field $V$ is a function of $x$ and does not depend on $t$. Therefore, $V(0) =V$.

We observe that, for 
the Boundary value Problem (\ref{laplace_equation_lc}) on $\Omega =B \setminus P$, the corresponding $(h,z)$ belongs to $L^2(\Omega) \times H^{\frac{1}{2}}(\Gamma)$. And hence 
there exists a solution $y(\Omega) \in H^1(\Omega)$ of the Dirichlet boundary value problem (\ref{laplace_equation_lc}). We will prove that, for this choice of $h ,z$ and for the solution $y$, the shape derivatives $h^\prime(\Omega, V), z^\prime(\Gamma, V)$ and $y^\prime(\Omega, V)$ exist, and belong to $ L^2(\Omega), H^{\frac{1}{2}}(\Gamma)$ and $ H^1(\Omega)$ respectively, for any $V \in \mathcal{C}(0, \epsilon; \mathcal{D}^k(D; \mathbb{R}^N))$. We will also prove that both $h^\prime$ and $z^\prime$ are zero. Then, by the proposition \ref{page119} mentioned above, i.e., Proposition 3.1 on page 119 of \cite{Sokolowski_Zolesio} we would have proved that the shape derivative $y^\prime(\Omega, V)$ satisfies boundary value problem (\ref{1192new}). 


In view of Definition 2.71 on page 98 of \cite{Sokolowski_Zolesio}, the material derivative $\dot{h}=\dot{h}(\Omega, V) 
$ of the function $h(\Omega) \in C^\infty(\Omega)$ exists, belongs to $C^\infty(\Omega)$ and equals zero. Moreover, because $h \in  C^\infty(\Omega)$, $\dot{h}(\Omega, V)$ is exactly equal to $\left<\nabla h, V\right>$. Therefore, by Definition 2.85 on page 111 of \cite{Sokolowski_Zolesio}, we get the shape derivative $h^\prime$ of $h$ in the direction of $V$ exists and is an element of $ \mathcal{C}^\infty(\Omega)$ defined by $h^\prime(\Omega, V)= \dot{h}(\Omega, V) - \left<\nabla h, V\right>$ is $0$.

In view of Definition 2.74 on page 100 of \cite{Sokolowski_Zolesio}, the material derivative $\dot{z}=\dot{z}(\Gamma, V) 
$ of $z(\Gamma) \in C^\infty(\Gamma)$ exists, belongs to $C^\infty(\Gamma)$ and equals zero. Moreover, because $z \in  C^\infty(\Gamma)$, $\dot{z}(\Gamma, V)$ is exactly equal to $\left<\nabla_\Gamma z, V\right>$. Therefore, by Definition 2.88 on page 114 of \cite{Sokolowski_Zolesio}, we get the shape derivative of $z^\prime$ in the direction of $V$ exists and is an element of $ \mathcal{C}^\infty(\Gamma)$ defined by $z^\prime(\Gamma, V)= \dot{z}(\Gamma, V) - \left<\nabla_\Gamma z(\Gamma), V\right>$ is $0$.

Thus, (\ref{1192}) becomes
\begin{equation}\label{1192new}
\begin{aligned}
-\Delta y^\prime(\Omega, V) &=0 & \mbox{ in } & \mathcal{D}^\prime(\Omega),\\
y^\prime(\Omega, V)\arrowvert_{\Gamma} &=- \dfrac{\partial y}{\partial n} \left<V,n\right>& \mbox{ on } & \partial \Omega,\\
\end{aligned}
\end{equation}

\subsection{The energy functional} 
We recall here that $P$ is a compact and simply connected subset of $\mathbb{R}^2$ satisfying assumptions (0) to (iv) of section \ref{assumptions}. 
and that $B$ is an open disk in $\mathbb{R}^2$ of radius $r_1$ such that $B \supset cl(conv(C_2(P)))$. For $s\in\mathbb{R}$, let $\rho_s \in SO(2)$ denote the rotation in $\mathbb{R}^2$ about the origin $\underline{o}$ in the anticlockwise direction by an angle $s$, i.e., for $\zeta \in \mathbb{C} \cong \mathbb{R}^2  $, we have $\rho_s \zeta :=e^{\textbf{i} s} \zeta$. Now, fix $s \in [0, 2\pi[$. 
Let $P_s :=  \rho_s(P)$, $\Omega_s:= B \setminus \rho_s(P)$ and $\mathcal{F}:= \{\Omega_s \, :\, s \in [0,2 \pi)\}$. 

When $g=M$, a constant, then 
by Theorem 8.14 on page 188 of \cite{Gilberg-Trudinger} 
we get that the solution $u_s$ of (\ref{laplace_equation_lc}) on $\Omega_s$ belong to $\mathcal{C}^\infty(\overline{\Omega_s})$. Therefore, by Green's identity we get 
$E(s):= E(\Omega_s) = \int_{\Omega_s} \|\nabla u_s\|^2 \, dx = - \int_{\Omega_s}( \Delta u_s) u_s \, dx + \int_{\partial \Omega_s}  u_s \, \dfrac{\partial u_s}{\partial n} \, d\Sigma,$
where $d\Sigma$ is the line element on $\partial \Omega_s$ and $n$ is the outward unit normal vector to $\Omega_s$ at $x \in \partial\Omega_s$. 
From (\ref{laplace_equation_lc}), $E(s)$ reduces to the following
\begin{equation}\label{M}E(s) = \int_{\partial B} u_s \, \dfrac{\partial u_s}{\partial n} \, d\Sigma = \int_{\partial B} g \, \dfrac{\partial u_s}{\partial n} \, d\Sigma =M\,\int_{\partial B} \dfrac{\partial u_s}{\partial n} \, d\Sigma .\end{equation}
\subsection{Formal deduction of the Eulerian derivative of the energy functional}\label{deduction}
By the arguments similar to the one given in section \ref{ExistSD}, one can prove that, for each $s$ such that $s \in [0, 2\pi[$, the shape derivative of $u_s$ w.r.t. the perturbation vector field $V$ exists 
 and satisfies the following boundary value problem:
\begin{equation}\label{shapederivativeBVP}
\begin{aligned}
\Delta w &=0 & \mbox{ in } & \Omega_s:=B \setminus \overline{P_s},\\
&w =- \dfrac{\partial u_s}{\partial n} \left<V,n\right> & \mbox{ on } & \partial P_s,\\
&w=0 & \mbox{ on } & \partial B.
\end{aligned}
\end{equation}
 We denote the shape derivative of $u_s$ as $u_s^\prime$.

 Since $u_s \in \mathcal{C}^\infty(\bar{\Omega})$ and $V \in \mathcal{C}^\infty(D)$, Theorem  8.14 on page 188 of \cite{Gilberg-Trudinger} 
implies that the weak solution $u^\prime_s \in H^1(\Omega)$ of (\ref{shapederivativeBVP}) is in fact in $\mathcal{C}^\infty(\bar{\Omega})$. 
Therefore, by Green's identity applied 
to $u_s$ and $u_s^\prime$ 
we get,
$\int_{\Omega_s} u_s \Delta u_s^\prime dx-\int_{\Omega_s} u_s^\prime \Delta u_s dx = \int_{\partial \Omega_s} u_s  \dfrac{\partial u_s^\prime}{\partial n} d\Sigma-\int_{\partial \Omega_s} u_s^\prime \dfrac{\partial u_s}{\partial n} d\Sigma .$
In view of (\ref{laplace_equation_lc}) and (\ref{shapederivativeBVP}), we then get
\begin{equation}\label{BoundInt}\int_{\partial B} u_s  \dfrac{\partial u_s^\prime}{\partial n} d\Sigma=\int_{\partial P_s} u_s^\prime \dfrac{\partial u_s}{\partial n} d\Sigma=-\int_{\partial P_s} \left( \dfrac{\partial u_s}{\partial n}\right)^2 \left<V, n\right>d\Sigma . \end{equation}

Since $E(s)= \int_{\partial \Omega_s} u_s \, \frac{\partial u_s}{\partial n}\, dS$, we refer to section 2.33 titled `Derivatives of boundary integrals' on page 115 of \cite{Sokolowski_Zolesio} to derive the expression of the Eulerian derivative of $E(\Omega)$: Let $D$ be a given domain in $\mathbb{R}^N$. Let $Z_s:=Z(\partial \Omega_s) := u_s \,\dfrac{\partial u_s}{\partial n}  \in L^1(\partial \Omega_s)$. From section \ref{ExistSD} it follows that for every vector field $V  \in \mathcal{C}(0, \epsilon; \mathcal{D}^k(D; \mathbb{R}^N))$, $u_s$ has a strong material derivative in $L^1(\partial \Omega_s)$ and a shape derivative in $L^1(\partial \Omega_s)$. Now since $u_s\in \mathcal{C}^\infty(\bar{\Omega})$, it is easy to see that $Z_s$ has a strong material derivative in $L^1(\partial \Omega_s)$ and a shape derivative in $L^1(\partial \Omega_s)$, for any vector field $V  \in \mathcal{C}(0, \epsilon; \mathcal{D}^k(D; \mathbb{R}^N))$. By equation (2.173) on page 116 of \cite{Sokolowski_Zolesio}, it follows that the Eulerian derivative $dE(\Omega, V)$ of $E$ at  
$\Omega$ in the direction $V$ exists and that
$dE(\Omega, V) = \int_{\partial \Omega} \left[ Z^\prime(\partial \Omega, V) + \kappa \, Z \left<V, n\right>\right]\, d\Sigma,$ where $\kappa$ is the mean curvature on the manifold $\partial \Omega$. It's not difficult to see that $Z^\prime(\partial \Omega, V)=u \,\dfrac{\partial u^\prime}{\partial n}+ u^\prime \,\dfrac{\partial u}{\partial n}$. Therefore, $dE(\Omega, V) = \int_{\partial \Omega} \left[ u \,\dfrac{\partial u^\prime}{\partial n}+ u^\prime \,\dfrac{\partial u}{\partial n} + \kappa \, u \, \dfrac{\partial u}{\partial n}\,\left<V, n\right>\right]\, d\Sigma.$
 Now, as $V\equiv 0$ on $\partial B$, from (\ref{laplace_equation_lc}) and (\ref{shapederivativeBVP}), it follows that 
$$dE(\Omega, V) = \int_{\partial B}  u \,\dfrac{\partial u^\prime}{\partial n} \, d\Sigma+  \int_{\partial P} 
u^\prime \,\dfrac{\partial u}{\partial n}
\left<V, n\right>
\, d\Sigma= \int_{\partial B}  u \,\dfrac{\partial u^\prime}{\partial n} \, d\Sigma-  \int_{\partial P} \left(\dfrac{\partial u}{\partial n}\right)^2 \,\left<V, n\right>\, d\Sigma.$$
Now, from (\ref{BoundInt}) it follows that 
$dE(\Omega, V) = -\int_{\partial P} \left( \dfrac{\partial u}{\partial n}\right)^2 \left<V, n\right>d\Sigma -  \int_{\partial P} \left(\dfrac{\partial u}{\partial n}\right)^2 \,\left<V, n\right>\, d\Sigma.$
Therefore,
\begin{equation}\label{EulerianDerivative}dE(\Omega, V) = -2\int_{\partial P} \left( \dfrac{\partial u}{\partial n}\right)^2 \left<V, n\right>d\Sigma.\end{equation}
In a similar way one can prove that the Eulerian derivative of $E$ at  
$\Omega_{s}$ in the direction $V$ exists for each $s \in [0, 2 \pi[$. We denote this Eulerian derivative by $dE(\Omega_s;V)$. Thus,  for each $s \in [0, 2 \pi[$,
\begin{equation}\label{EulerianDerivativeS1}dE(\Omega_s, V) = -2\int_{\partial P_s} \left( \dfrac{\partial u_s}{\partial n}\right)^2 \left<V, n\right>d\Sigma.\end{equation}
\section{Proof of Theorem \ref{max_min}}\label{proof}
In this section, we prove Theorem \ref{max_min} {for $n \in \mathbb{N}, n \geq 3$, $n$ even. We further prove that equation (\ref{even_lambda}) and Proposition \ref{critical_points} hold true for every natural number $n\geq 3$.}

We first justify that, {for any natural number $n \geq 3$}, the energy functional $E$ for the family of domains under consideration is a function of just one real variable, and that it is an even, differentiable and periodic function of period $2 \pi /n$. This helps in identifying the critical points of $E$.  Therefore, in order to determine the extremal configuration/s for $E$ we study its behaviour on the interval $[0, \frac{\pi}{n}]$. The expression (\ref{EulerianDerivativeS1}) for its derivative, that we derived in section \ref{deduction}, becomes useful in this analysis. We identify some of critical points of $E$ in Proposition \ref{critical_points} {for $n \in \mathbb{N}$, $n\geq 3$.}  

We prove Proposition \ref{complete_critical_points} {for even natural number $n$, $n \geq3$}. In view of equation (\ref{even_lambda}), Propositions \ref{critical_points} and \ref{complete_critical_points} imply that, {for $n \in \mathbb{N}$, $n$ even, $n \geq3$}, (a) these are the only critical points for $E$, and that, (b) between every pair of consecutive critical points, $E$ is a strictly monotonic function of the argument. {For this analysis, we use the `sector reflection technique' and the `rotating plane method' which were introduced in \cite{Anisa-Souvik}.}
\subsection{\textbf{Sufficient Condition for the Critical Points of $E(B \setminus P_t),$ $t \in [0, 2 \pi[$}} 
{ Fix $n \geq3$, $n \in \mathbb{N}$.} Let $E(t)$ be as in (\ref{energy_functional}) which denotes the energy functional associated to the Boundary value problem (\ref{laplace_equation_lc}) on $\Omega_t$, i.e., $E(t):= E(\Omega_t)$.  In this section, we establish a sufficient condition for the critical points of the $\mathcal{C}^1$ function $E: \mathbb{R} \rightarrow ]0, \infty[ $. 

Recall from \cite{Anisa-Souvik} 
that $B =\lbrace{re^{i\phi}: \phi\in[0,2\pi[, 0\leq r < g(\phi)\rbrace}$, where $g: [0,2 \pi] \rightarrow [0, \infty[$ is a $\mathcal{C}^2$ map with $g(0)= g(2 \pi)$. Here, $(r,\phi)$ is measured with respect to the origin $\underline{o}=(0,0)$ of $\mathbb{R}^2$. 
\subsubsection{\textbf{The Initial Configuration}}\label{a:init_config}
We start with the following initial configuration $\Omega_{\mbox{init}}$ of a domain $\Omega \in \mathcal{F}$. 
Let $P$ and $B$ be as described in section \ref{stmnt}. Let $\Omega_{\mbox{init}}$  denote the domain $B \setminus P \in \mathcal{F}$, where $P$ is in an OFF position with respect to $B$. { Recall that we assumed, without loss of generality, that (a) the centers of $B$ and $P$ are on the $x_1$-axis, (b) the center of $P$ is at the origin, and (c) the center of $B$ is on the negative $x_1$-axis. 
Let $\underline{x}^0:=(-x^0,0)$ be the center of the disk $B$, where $0< x^0<r_1$. }The initial configurations for obstacles with $\mathbb{D}_n$ symmetry are shown in Figure \ref{fig:in_config}. 
\begin{figure}[H]
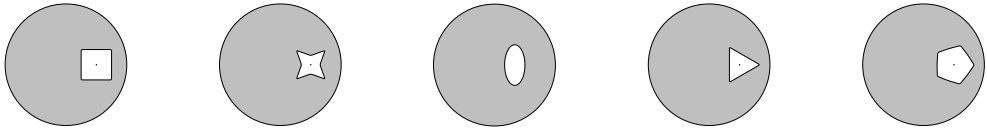
\centering
\subfloat{
\includegraphics[width=0.1\textwidth]{off.pdf}}\hspace{10mm}
\subfloat{
\includegraphics[width=0.1\textwidth]{star_off.pdf}} 
\hspace{10mm}
\subfloat{
\includegraphics[width=0.1\textwidth]{off_ellipse1_mod.pdf}}
\hspace{10mm}
\subfloat{
\includegraphics[width=0.1\textwidth]{off_triangle.pdf}}\hspace{10mm}
\subfloat{
\includegraphics[width=0.1\textwidth]{off_pentagon.pdf}}
\caption{The initial configurations}
\label{fig:in_config}
\end{figure}
As in \cite{Anisa-Souvik}, 
$P=\lbrace re^{\textbf{i}\phi}\, :\, \phi\in [0,2\pi[, 0 \leq r < f(\phi)\rbrace$,
where $f: [0,2 \pi] \rightarrow [0, \infty)$ is a $\mathcal{C}^2$ map with $f(0)=f(2 \pi)$. Because of the initial configuration assumptions on $B \setminus P$, { $f$ 
is an increasing function of $\phi$ on $]{0,\frac{\pi}{n}}[$ for $n$ even, and is a decreasing function of $\phi$ on $]{0,\frac{\pi}{n}}[$ for $n$ odd. }
The condition that the obstacle $P$ can rotate freely about its center $\underline{o}$ inside $B$, that is, $\rho(P)$ is contained in $B~ \forall \rho \in SO(2)$ is guaranteed by assuming that the closure of the convex hull of the circumcircle $C_2(P)$ is contained in $B$.

\subsubsection{\textbf{Configuration at Time $t$}}
 Now, fix $t \in [0, 2\pi[$. We set 
$P_t := \rho_t(P), \Omega_t := B \setminus P_t$.
Then, in polar coordinates, we have $\partial P_t : =  \{f(\phi-t) e^{i \phi}\, |\, \phi \in [0, 2\pi[\}.$
\begin{figure}[H]\centering
\subfloat{\includegraphics[width=0.2\textwidth]{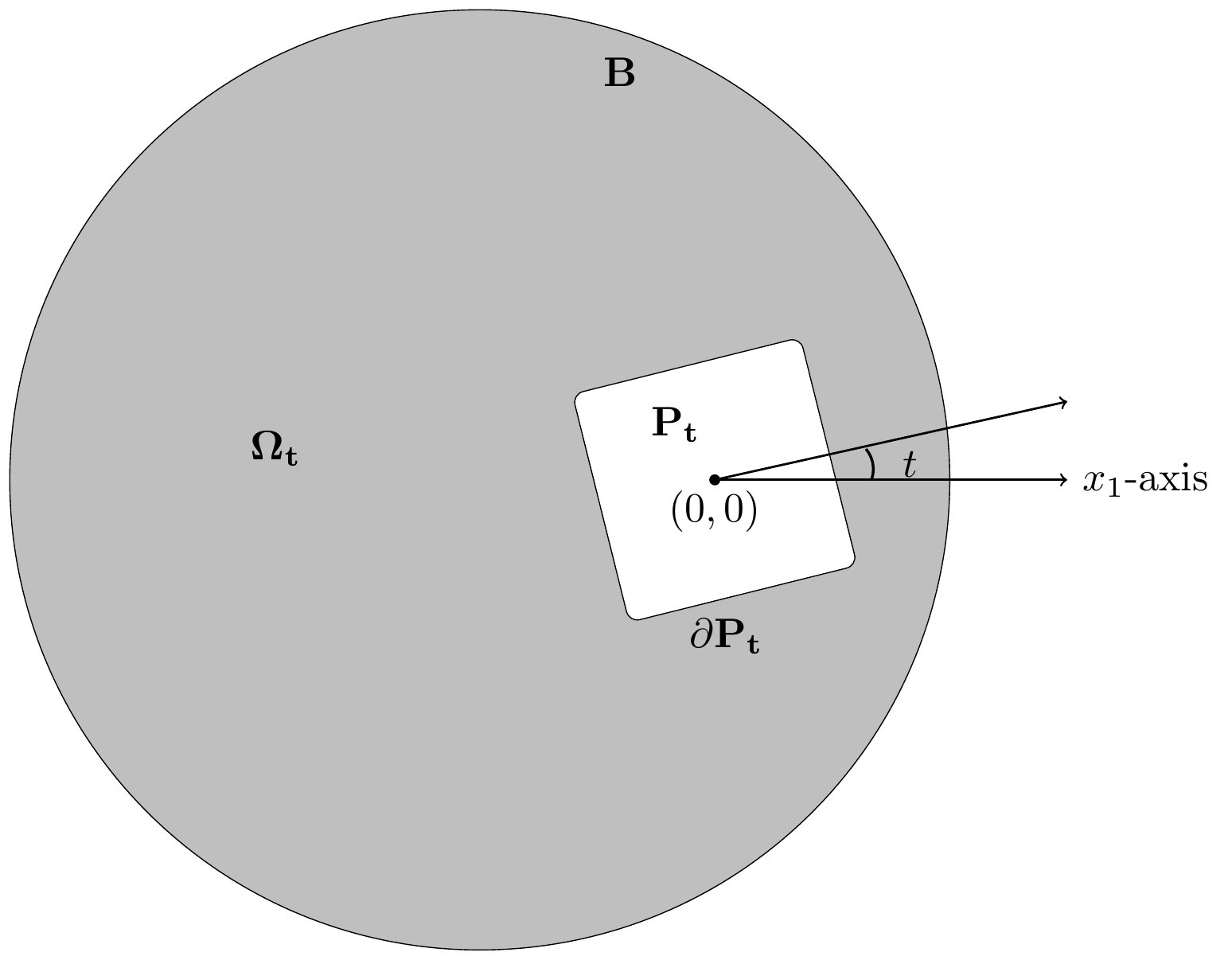}   }\hspace{8mm}
\subfloat{\includegraphics[width=0.2\textwidth]{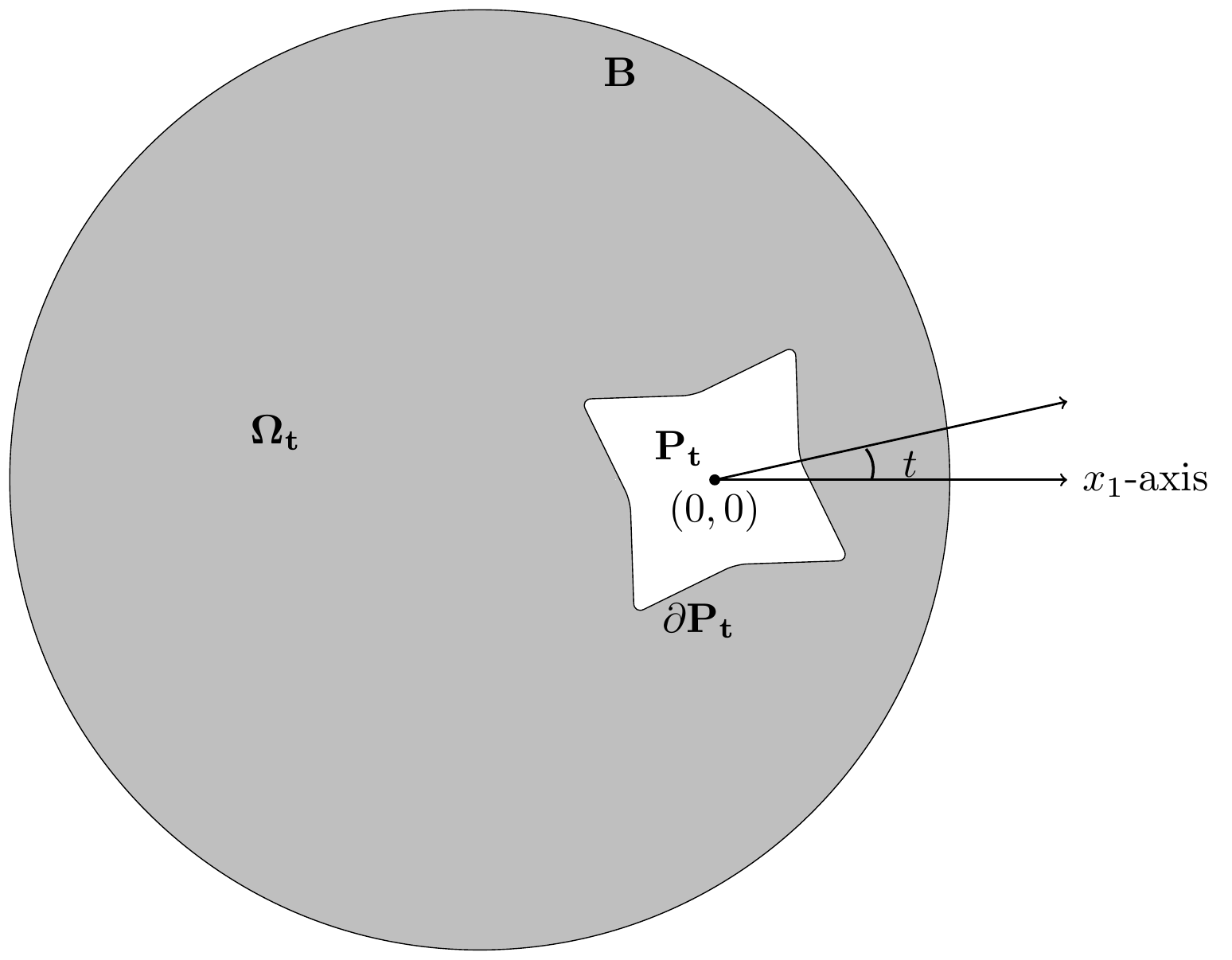}   }\hspace{8mm}
\subfloat{\includegraphics[width=0.2\textwidth]{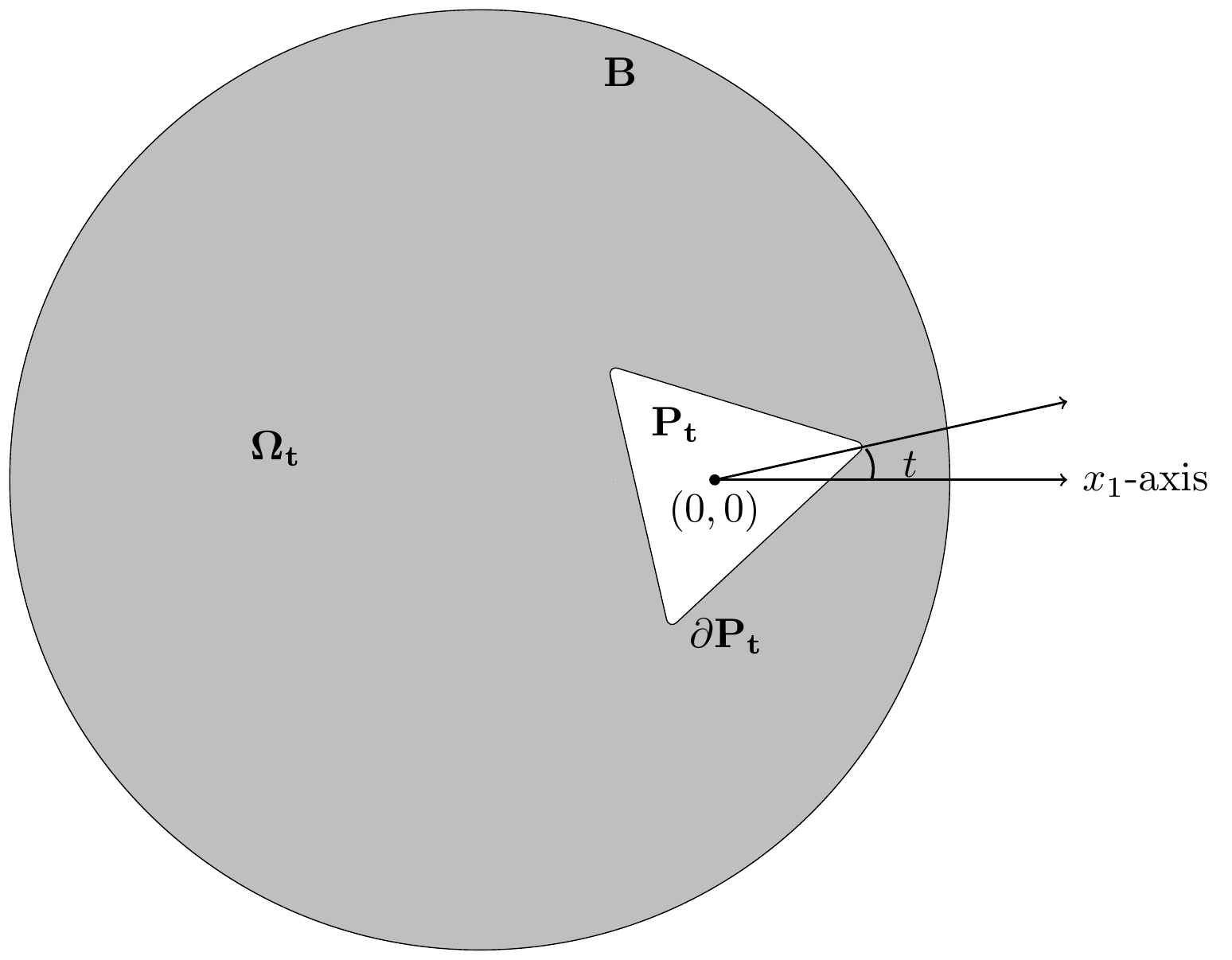}   }\hspace{8mm}
\subfloat{\includegraphics[width=0.2\textwidth]{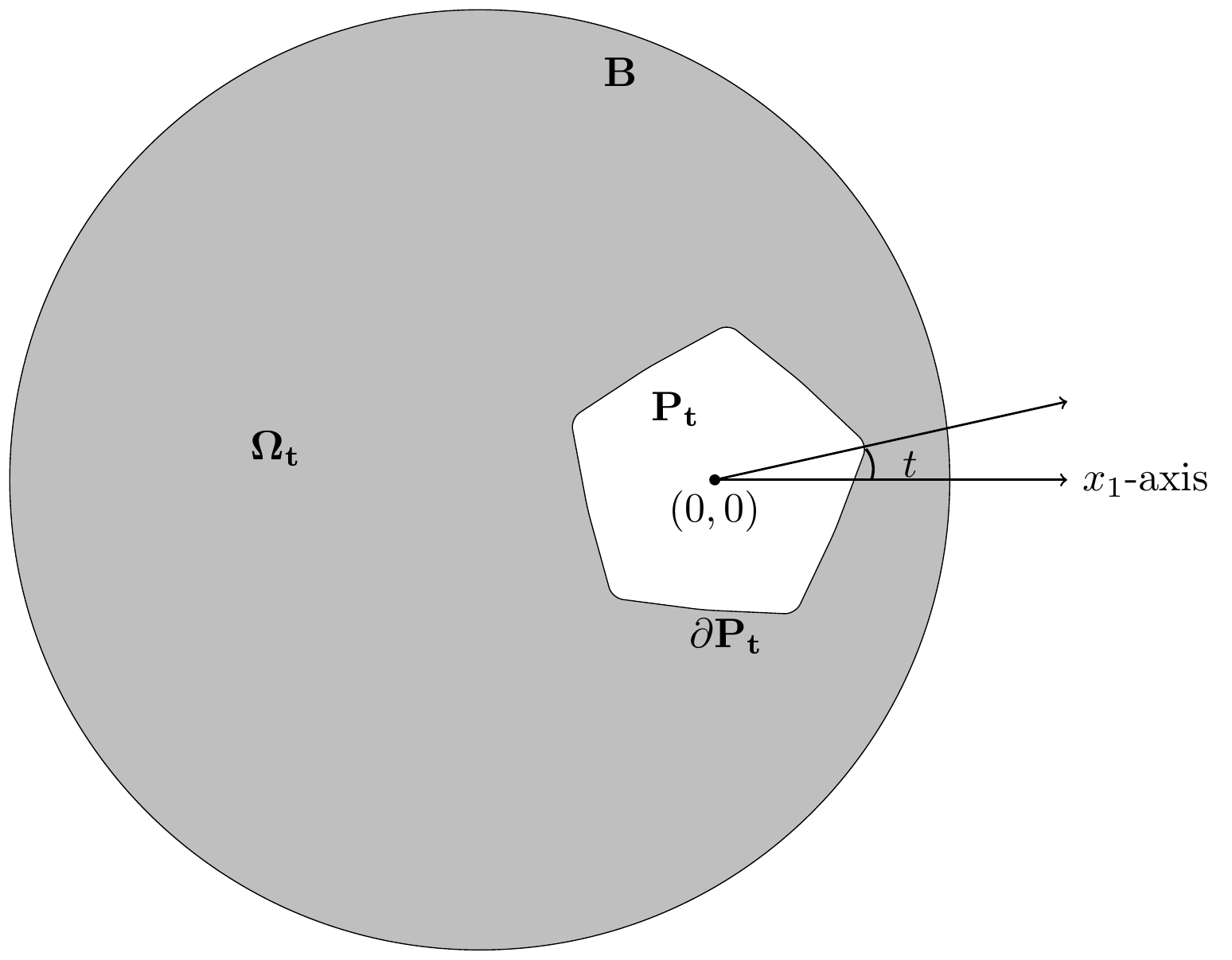}   }
\caption{Configuration at time $t$}\label{fig:t_config}
\end{figure} 

\subsubsection{\textbf{Expression for the Eulerian Derivative of the energy functional $E$}}\label{HPF} 
We recall, from section \ref{deduction}, that the Eulerian derivative $dE(\Omega, V)$ of $E$ at  
$\Omega$ in the direction $V$ exists, and is given by (\ref{EulerianDerivative}). 
In other words, the derivative $E^\prime(t)$ of $E$ at a point $t \in \mathbb{R}$ is given by 
\begin{equation}\label{Hadamard}
E^\prime(t) =- 2\int_{x \in \partial P_t} \left|{\dfrac{\partial y(t)(x)}{\partial \eta_t}}\right|^2 ~ \left<\eta_t, v\right>(x)~d\Sigma(x),
\end{equation}
where $d\Sigma$ is the line element on $\partial P_t$, $\eta_t(x)$ is the outward unit normal vector to $\Omega_t$ at $x \in \partial\Omega_t$, and $v \in \mathcal{C}_0^\infty(\Omega_t)$ is the 
 deformation vector field defined as  
$v(\zeta) = \rho(\zeta) \, \textbf{i}\zeta, \, \forall \, \zeta \in  \mathbb{C} \cong \mathbb{R}^2$.
Here, $\rho: \mathbb{R}^2 \rightarrow [0,1]$ is a smooth function with compact support in $B$ such that $\rho \equiv 1$ in a neighbourhood of $cl(conv(C_2(P)))$. 

\subsubsection{\textbf{The Energy Functional $E$ is an Even and Periodic Function with Period $\frac{2\pi}{n}$}}\label{PeriodicEnergy}
{ Recall that $n \geq 3$ is a fixed positive integer}. 
Let $R_0:\mathbb{R}^2 \rightarrow \mathbb{R}^2$ denote the reflection in $\mathbb{R}^2$ about the $x_1$-axis. 
Then, as in \cite{Anisa-Souvik}, we get  
$\Omega_{-t}=R_0(\Omega_t)$ for all $t \in \mathbb{R}$ and 
$\Omega_{\frac{2 \pi}{n}+t} = \Omega_t$ for all $t \in \mathbb{R}$. Further, since the boundary data is constant on each boundary component, the solution  $y$ 
of (\ref{laplace_equation_lc}) satisfies $y(\Omega_{\frac{2 \pi}{n}+t})= y(\Omega_t)$ for all $t \in \mathbb{R}$.
This implies that $E:  \mathbb{R} \rightarrow (0, \infty)$ is an even and periodic function with period $\frac{2\pi}{n}$. Thus we have, 
\begin{equation}\label{even_lambda}
E\left({t+\dfrac{2\pi}{n}}\right)=E(t),~ \mbox{ and }~ E(-t)=E(t) ~~~~ \forall \; t ~ \in\mathbb{R}.
\end{equation}
Therefore, it suffices to study the behaviour of $E(t)$ only on the interval $\left[{0,\frac{\pi}{n}}\right]$.  
\subsubsection{\textbf{Sufficient Condition for the Critical Points of $E$}}
The following theorem states a sufficient condition for the critical points of the function $E: \mathbb{R} \rightarrow (0, \infty) $.
\begin{proposition}[Sufficient condition for critical points of $E$]\label{critical_points}{  Let $n \geq 3$ be a fixed integer}. Then, for each $k = 0, 1, 2, \ldots,2 n-1$, $E^\prime\left(k\frac{\pi}{n}\right)=0$. 
\end{proposition}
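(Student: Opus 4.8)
The plan is to derive the result entirely from the symmetry relations recorded in equation (\ref{even_lambda}) — that $E$ is even and $\frac{2\pi}{n}$-periodic — together with the fact, established through the shape calculus of sections \ref{scal}--\ref{deduction}, that $E$ is of class $\mathcal{C}^1$. First I would differentiate the two identities in (\ref{even_lambda}). Differentiating $E(-t)=E(t)$ shows that $E'$ is an \emph{odd} function, $E'(-t)=-E'(t)$; differentiating $E\!\left(t+\frac{2\pi}{n}\right)=E(t)$ shows that $E'$ is \emph{periodic} with period $\frac{2\pi}{n}$, so that $E'\!\left(t+m\,\frac{2\pi}{n}\right)=E'(t)$ for every integer $m$.

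Next I would combine these two facts at the points in question. Fix $k\in\{0,1,\ldots,2n-1\}$. Using periodicity to subtract $k$ full periods, $E'\!\left(\frac{k\pi}{n}\right)=E'\!\left(\frac{k\pi}{n}-k\,\frac{2\pi}{n}\right)=E'\!\left(-\frac{k\pi}{n}\right)$, and then oddness gives $E'\!\left(-\frac{k\pi}{n}\right)=-E'\!\left(\frac{k\pi}{n}\right)$. Chaining these yields $E'\!\left(\frac{k\pi}{n}\right)=-E'\!\left(\frac{k\pi}{n}\right)$, whence $E'\!\left(\frac{k\pi}{n}\right)=0$. Equivalently, one may observe that (\ref{even_lambda}) forces $E$ to be symmetric about each of the points $\frac{k\pi}{n}$, namely $E\!\left(\frac{k\pi}{n}+s\right)=E\!\left(\frac{k\pi}{n}-s\right)$ (apply evenness, then shift by $k\,\frac{2\pi}{n}$), from which the vanishing of the derivative there is immediate.

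As an independent confirmation rooted in the geometry, I would check the vanishing directly from the Hadamard formula (\ref{Hadamard}). At $t=\frac{k\pi}{n}$ an axis of symmetry of $P_t$ coincides with the $x_1$-axis, so the reflection $R_0$ about that axis maps $\Omega_t$ onto itself; hence the solution $y(t)$ is $R_0$-invariant and $\left|\frac{\partial y(t)}{\partial \eta_t}\right|^2$ is even under $R_0$ along $\partial P_t$. On the other hand the rotational deformation field $v(\zeta)=\rho(\zeta)\,\textbf{i}\zeta$ is reversed by $R_0$, so the scalar $\langle \eta_t, v\rangle$ is odd under $R_0$. The integrand in (\ref{Hadamard}) is therefore odd under the reflection that preserves the domain of integration $\partial P_t$, and the integral vanishes.

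There is no serious obstacle here: the only point requiring care is the legitimacy of differentiating the functional identities (\ref{even_lambda}), which is exactly what the $\mathcal{C}^1$ regularity of $E$ — guaranteed by the existence of the Eulerian derivative established in section \ref{deduction} — provides. I would present the symmetry argument as the main proof, since it needs nothing beyond (\ref{even_lambda}), and relegate the Hadamard-integral computation to a remark; the finer facts that these are the \emph{only} critical points and that $E$ is strictly monotone between them are deferred to Proposition \ref{complete_critical_points} and its sector-reflection analysis.
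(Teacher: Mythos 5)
Your proposal is correct, but your primary argument takes a genuinely different (and more elementary) route than the paper. The paper proves Proposition \ref{critical_points} directly from the Hadamard formula (\ref{Hadamard}): at $t_k=k\frac{\pi}{n}$ the domain $\Omega_{t_k}$ is symmetric about the $x_1$-axis, the solution satisfies $y(t_k)\circ R_0=y(t_k)$, so $\left|\dfrac{\partial y(t_k)}{\partial \eta}\right|^2$ is $R_0$-even on $\partial P_{t_k}$ while $\left<\eta,v\right>$ is $R_0$-odd by property $(iii)$ of Lemma 4.2 of \cite{Anisa-Souvik} (Lemma \ref{properties_eta} here); the integrand is therefore odd under a reflection preserving $\partial P_{t_k}$ and the integral vanishes. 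This is exactly your ``independent confirmation,'' so that part of your write-up reproduces the paper's own proof. Your main argument instead differentiates the identities (\ref{even_lambda}) --- $E$ even and $\frac{2\pi}{n}$-periodic --- to conclude that $E'$ is odd and $\frac{2\pi}{n}$-periodic, whence $E'\left(k\frac{\pi}{n}\right)=E'\left(-k\frac{\pi}{n}\right)=-E'\left(k\frac{\pi}{n}\right)=0$. This is legitimate and not circular: (\ref{even_lambda}) is established in section \ref{PeriodicEnergy} before and independently of the proposition, and the differentiability needed to pass from (\ref{even_lambda}) to statements about $E'$ is supplied by the existence of the Eulerian derivative from section \ref{deduction}. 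What your route buys is economy: no manipulation of the integrand is needed, only the standard fact that a differentiable even periodic function is critical at every integer multiple of its half-period. What the paper's route buys is that the same reflection machinery (invariance of the solution, the sign and parity of $\left<\eta,v\right>$ from Lemma \ref{properties_eta}) is precisely what gets reused and sharpened in the proof of Proposition \ref{complete_critical_points}, so deploying it already here costs nothing and keeps the argument local to the Hadamard integrand.
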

\begin{proof} \quad
Fix $k \in \{0, 1, 2, \ldots,2 n-1\}$. Let $t_k:=k\frac{\pi}{n}$. 
Then, the domain $\Omega_{t_k}$ is symmetric with respect to the $x_1$ axis. And, because of the nice boundary conditions in (\ref{laplace_equation_lc}), the solution  $y\left(t_k \right)$ satisfies
$u\circ R_0=u$,
where $R_0$ is as in section \ref{PeriodicEnergy}. 
The proof now follows, 
as in \cite{Anisa-Souvik}, from property $(iii)$ of Lemma 4.2 of \cite{Anisa-Souvik} and the expression expression (\ref{Hadamard}) by observing that 
%
$\dfrac{\partial \left(y\left(t_k \right) \circ R_0\right)}{\partial \eta}(x)=\dfrac{\partial \left(y\left(t_k\right)\right) }{\partial \eta}\left(R_0(x)\right)$ 
for each $x$ on $\partial P_{t_k}$ where the normal derivative makes sense. 
\qed
\end{proof}
\subsection{\textbf{The Sectors of $\Omega_t$}} We recall the definition of sectors of $\Omega_t$ from \cite{Anisa-Souvik}. 
{ Fix $n \geq 3$, $n \in \mathbb{N}$.} For a fixed $t \in \mathbb{R}$ and $a, b \in \mathbb{Z}$, $a< b$, let $\sigma_{(a,b)}:= \sigma\left( {t+\frac{a\pi}{n},t+\frac{b\pi}{n}}\right):=\left\{ r \, e^{i \phi} \in  \mathbb{R}^2
\, : \,  \phi \in \left(t+\frac{a \pi}{n},t+\frac{b \pi}{n}\right), \, r \in \mathbb{R} \right\}$. 
Then, as in \cite{Anisa-Souvik}, equation (\ref{Hadamard}) can be written as
\begin{equation}\label{sum_integral}
\begin{aligned}
E^\prime(t) = &-2\sum_{k=0}^{n-1} \int_{\partial P_t \cap \sigma_{(k,k+1)}} \left|{\dfrac{\partial y(t)(x)}{\partial \eta_t}}\right|^2 ~ \left< \eta_t,  v \right>(x)~d\Sigma(x)\\
& -2\sum_{k=n}^{2n-1} \int_{\partial P_t \cap \sigma_{(k,k+1)}} \left|{\dfrac{\partial y(t)(x)}{\partial \eta_t}}\right|^2~ \left< \eta_t, v \right>(x)~d\Sigma(x).
\end{aligned}
\end{equation}
%
Please refer to Figure 7 of \cite{Anisa-Souvik} for a pictorial illustration of sectors of $\Omega_t$. 
\subsection{\textbf{A Sector Reflection Technique}}
 Here onwards, we fix $n\geq 3$, $n \in \mathbb{N}$, $n$ even. 
For $\alpha \in [0, 2 \pi]$, the set $z_\alpha := \{ r e^{i \alpha} \,| \, r \in \mathbb{R}\}$ denotes the line in $\mathbb{R}^2$ corresponding to angle $\phi=\alpha$, represented in polar coordinates. Let $R_{\alpha}:\mathbb{R}^2 \rightarrow \mathbb{R}^2$, $\alpha \in \mathbb{R}$, denote the reflection map about the $z_{\alpha}$-axis. For each $t \in \mathbb{R}$, the obstacle $P_t$ is symmetric with respect to the line $z_{t+\frac{(k+1)\pi}{n}}$.  We have, for $k=0,1,2, \ldots,2 n-1$, 
\begin{equation} \label{sectorinclusion}
\displaystyle{R_{t+\frac{(k+1)\pi}{n}}(\partial P_t \cap \sigma_{(k, k+1)}) =\partial P_t \cap \sigma_{( k+1, k+2)}}.
\end{equation}
For $k =0, 1,2, \dots, 2n-1$, let $H_1^k(t):= \Omega_t \cap \sigma_{(k,k+1)}$. Now, let $\tilde{H}_1^k :=cl(\Omega_t) \cap \sigma_{(k,k+1)} $. 
%
\subsection{\textbf{The Rotating Plane Method}}
\noindent { Recall here that $n \geq 3$ is a fixed even natural number.} 
As in \cite{Anisa-Souvik} we have the following: \\
 For each $k =0,2,4, \ldots , n-2$,\begin{equation}\label{ref_1}
\begin{aligned}
&\int_{\partial P_t \cap \sigma_{(k,k+1)}} \left|{\dfrac{\partial y(t)}{\partial \eta_t}}(x)\right|^2~\left< \eta_t,  v\right>(x)~d\Sigma + \int_{\partial P_t \cap \sigma_{(k+1,k+2)}} \left|{\dfrac{\partial y(t)}{\partial \eta_t}}(x)\right|^2~ \left<\eta_t , v\right>(x)~d\Sigma\\
=&\int_{\partial P_t \cap \sigma_{(k,k+1)}} \left({\left|{\dfrac{\partial y(t)}{\partial \eta_t}}(x)\right|^2-\left|{\dfrac{\partial y(t)}{\partial \eta_t}}(x^\prime)\right|^2}\right)~ \left<\eta_t,  v \right>(x)~d\Sigma.
\end{aligned}
\end{equation}
\begin{equation}\label {inrprdctsgn1}\left< \eta_t,  v \right>  >0 ~ \mbox{ on }\partial P_t \cap \sigma_{(k,k+1)}~\mbox{ for each } k =0,2,4, \ldots , n-2.
\end{equation}
For each $k =n, n+2, \ldots, 2n-2$,
\begin{equation}\label{ref_2}
\begin{aligned}
&\int_{\partial P_t \cap \sigma_{(k,k+1)}} \left|{\dfrac{\partial y(t)}{\partial \eta_t}}(x)\right|^2~\left< \eta_t,  v\right>(x)~d\Sigma + \int_{\partial P_t \cap \sigma_{(k+1,k+2)}} \left|{\dfrac{\partial y(t)}{\partial \eta_t}}(x)\right|^2~ \left<\eta_t , v\right>(x)~d\Sigma\\
=&\int_{\partial P_t \cap \sigma_{(k+1,k+2)}} \left({\left|{\dfrac{\partial y(t)}{\partial \eta_t}}(x)\right|^2-\left|{\dfrac{\partial y(t)}{\partial \eta_t}}(x^\prime)\right|^2}\right)~ \left<\eta_t,  v \right>(x)~d\Sigma,
\end{aligned}
\end{equation}
\begin{equation}\label {inrprdctsgn2}\left< \eta_t,  v \right>  >0 ~ \mbox{ on }\partial P_t \cap \sigma_{(k+1,k+2)} ~\mbox{ for each } k =n, n+2, \ldots , 2n-2.\end{equation}
Here, $x^\prime:= R_{t+\frac{(k+1)\pi}{n}}(x)$.
\subsection{\textbf{Necessary Condition for the Critical Points of $E$}}
{ Recall here that $n \geq 3$ is a fixed even integer.} We finally show that $\left\{\frac{k \pi}{n}\, |\, k =0,1, \ldots n-1\right\}$ are the only critical points of $E$, and that, between every pair of consecutive critical points of $E$, it is a strictly monotonic function of the argument. In view of Proposition \ref{critical_points} and equation (\ref{even_lambda}), it now suffices to study the behaviour of $E$ only on the interval $\left({0,\frac{\pi}{n}}\right)$.   
\begin{proposition}[Necessary condition for critical points]\label{complete_critical_points}{  Fix $n \geq 3$, $n$ even, $n \in \mathbb{N}$.} Then, for each $t\in ]0,\frac{\pi}{n}[$, $E^\prime(t) < 
0$.
\end{proposition}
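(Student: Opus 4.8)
The plan is to reduce the claim to a single pointwise comparison of Neumann data and then to establish that comparison by a rotating plane argument. Starting from the sector decomposition (\ref{sum_integral}) together with the pairing identities (\ref{ref_1}) and (\ref{ref_2}), I would rewrite $E^\prime(t)$ as $-2$ times a finite sum of sector integrals, each of the shape $\int_{\partial P_t\cap\sigma}\bigl(|\partial y(t)/\partial\eta_t(x)|^2-|\partial y(t)/\partial\eta_t(x')|^2\bigr)\,\langle\eta_t,v\rangle(x)\,d\Sigma$, where $x'=R_{t+(k+1)\pi/n}(x)$ is the reflection of $x$ across the relevant symmetry axis $z_{t+(k+1)\pi/n}$ of $P_t$. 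By (\ref{inrprdctsgn1}) and (\ref{inrprdctsgn2}) the factor $\langle\eta_t,v\rangle$ is strictly positive on each domain of integration. Hence it suffices to prove, for every $t\in\,]0,\pi/n[$, the strict Neumann comparison $|\partial y(t)/\partial\eta_t(x)|>|\partial y(t)/\partial\eta_t(x')|$ for $x$ on the side of the axis distinguished by (\ref{inrprdctsgn1})--(\ref{inrprdctsgn2}) (with equality on at most a null set); this forces each sector integral to be positive and therefore $E^\prime(t)<0$.

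First I would fix one symmetry axis $z_\beta$ of $P_t$, $\beta=t+(k+1)\pi/n$, and set $\tilde y:=y(t)\circ R_\beta$. Because $P_t$ is $R_\beta$-invariant while the disk $B$ is not (its centre $\underline{x}^0$ lies strictly off $z_\beta$ for $t\in\,]0,\pi/n[$), the two functions $y(t)$ and $\tilde y$ are simultaneously defined on the region $\mathcal W$ cut out of $\Omega_t$ by $z_\beta$ on the side away from $\underline{x}^0$. On $\mathcal W$ the difference $w:=y(t)-\tilde y$ is harmonic; it vanishes on $z_\beta$ and on $\partial P_t$ (both $y(t)$ and $\tilde y$ are $0$ on the $R_\beta$-invariant curve $\partial P_t$, using (\ref{sectorinclusion})); and on the arc of $\partial B$ bounding $\mathcal W$ one has $w=M-\tilde y\ge 0$, since $0<\tilde y<M$ in the interior by the maximum principle. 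Thus $w\ge 0$ on $\partial\mathcal W$, and as $w\not\equiv 0$, the strong maximum principle gives $w>0$ throughout the interior of $\mathcal W$.

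With $w>0$ inside and $w=0$ on $\partial P_t\cap\mathcal W$, Hopf's boundary point lemma yields $\partial w/\partial\eta_t<0$ at each such boundary point, where $\eta_t$ is the outward unit normal to $\Omega_t$. Because the isometry $R_\beta$ carries the outward normal at $x$ to the outward normal at $x'$ (again by the symmetry of $P_t$), this reads $\partial y(t)/\partial\eta_t(x)-\partial y(t)/\partial\eta_t(x')<0$; since both quantities are negative (as $y(t)=0$ on $\partial P_t$ while $y(t)>0$ inside $\Omega_t$), passing to absolute values gives exactly $|\partial y(t)/\partial\eta_t(x)|>|\partial y(t)/\partial\eta_t(x')|$. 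Feeding this into the sector integrals and summing over the pairings in (\ref{ref_1})--(\ref{ref_2}) produces $E^\prime(t)<0$, as claimed.

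The delicate step is the geometric set-up of $\mathcal W$: one must know that reflecting the relevant arc of $\partial B$ across $z_\beta$ lands it inside $\overline B$, so that $\tilde y$ is genuinely defined on all of $\mathcal W$ and the outer boundary of $\mathcal W$ is a true sub-arc of $\partial B$ on which $w\ge 0$. This is precisely where the monotonicity of $\partial B$ recorded in Lemma 4.1 of \cite{Anisa-Souvik} enters, and where the hypotheses $t\in\,]0,\pi/n[$ and $n$ even are used: for such $t$ every symmetry axis of $P_t$ is strictly tilted with respect to the line joining the two centres, so the reflected cap is strictly contained and $w\not\equiv 0$, which upgrades the comparison to a strict one; at the endpoints $t=0,\pi/n$ an axis passes through $\underline{x}^0$, $w\equiv 0$, and one merely recovers the critical points already located in Proposition \ref{critical_points} via (\ref{even_lambda}). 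Verifying this containment uniformly for all the axes entering (\ref{ref_1})--(\ref{ref_2}) is the main point to check with care.
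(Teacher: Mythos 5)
Your argument is correct, and it reaches the paper's key inequality (\ref{drvsgn1}) with the same basic tools — the containment of the reflected cap of $B$ inside $\overline{B}$ (Lemma 4.1 of \cite{Anisa-Souvik}, packaged here as Lemma \ref{containment_1}), the strong maximum principle, and Hopf's boundary point lemma — but the domain on which you run the maximum principle is different, and the difference is worth noting. The paper defines $w(x)=y(t)(x)-y(t)(x^\prime)$ on $H(t)=\bigcup_{k \text{ even}}H_1^k(t)$, the union of alternating sectors, and must therefore determine the sign of $w$ on the radial pieces $\partial H(t)\cap z_{t+k\pi/n}$ for $k$ even; since $x$ and its reflection $x^\prime$ are then two distinct interior points of $\Omega_t$, this sign cannot be read off from the boundary data, and the paper handles it by an auxiliary reflection of the half-domain $\mathcal{O}_{k_0}(t)$ cut out by the pairing axis (this is precisely the difficulty the paper singles out as the new feature of its method). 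You instead take that half-domain, your $\mathcal{W}$, as the primary comparison domain: there every piece of $\partial\mathcal{W}$ — the axis, the part of $\partial P_t$, and the outer arc of $\partial B$ — has its sign determined directly by the boundary conditions in (\ref{laplace_equation_lc}), so a single application of the maximum principle and Hopf's lemma per pairing axis already yields the strict Neumann comparison on $\partial P_t\cap\sigma_{(k,k+1)}$ (and on $\partial P_t\cap\sigma_{(k+1,k+2)}$ for the axes in the lower half), which then feeds into (\ref{ref_1})--(\ref{ref_2}), (\ref{inrprdctsgn1})--(\ref{inrprdctsgn2}) and (\ref{final_integralnew}) exactly as in the paper. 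In effect you promote the paper's auxiliary step to the whole argument, which is more economical. The point you rightly flag as delicate — that for each pairing axis $z_{t+(k+1)\pi/n}$ with $t\in\,]0,\pi/n[$ the sector of integration lies in the cap of $B$ away from the centre $\underline{x}^0$, so the reflected outer arc is strictly interior and $w\not\equiv 0$ — is exactly the content of Lemma \ref{containment_1}; the hypothesis that $n$ is even is consumed not by this comparison but by the pairing of consecutive sectors that produces (\ref{final_integralnew}) in the first place, which is why the same half-domain estimate does not by itself settle the odd case.
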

\begin{proof} \quad  Notice here that if $u$ is a solution of (\ref{laplace_equation_lc}) for $g=M>0$, then $-u$ will be a solution of (\ref{laplace_equation_lc}) for $g=-M$. And hence, the energy for the case $g=M>0$ and $g=-M<0$ are the same. Therefore, it is enough to consider the case $g=M>0$.

Fix $t\in ]0,\frac{\pi}{n}[$.
Using (\ref{ref_1}) and (\ref{ref_2}), integral (\ref{sum_integral}) can be written as 

\begin{equation}\label{final_integralnew}
\begin{aligned}
E^\prime(t) 
 &= -2\sum_{\substack{0\leq k\leq n-2\\k \mbox{ \small{even} }}}
\int_{\partial P_t \cap \sigma_{(k,k+1)}} \left({\left|{\dfrac{\partial y(t)(x)}{\partial \eta_t}}\right|^2-\left|{\dfrac{\partial y(t)(x^\prime)}{\partial \eta_t}}\right|^2}\right)~ \left<\eta_t, v\right> (x)~d\Sigma(x)\\ &-2 \sum_{\substack{n\leq k\leq 2n-2\\k \mbox{ \small{even} }}} \int_{\partial P_t \cap \sigma_{(k+1,k+2)}} \left({\left|{\dfrac{\partial y(t)(x)}{\partial \eta_t}}\right|^2-\left|{\dfrac{\partial y(t)(x^\prime)}{\partial \eta_t}}\right|^2}\right)~ \left<\eta_t, v \right> (x)~d\Sigma(x) \\
\end{aligned}
\end{equation}
Let $\displaystyle H(t):= \bigcup_{\substack{0\leq k\leq n-2\\k \mbox{ \small{even} }}}
H_1^k(t)$. Let $w(x) := y(t)(x) - y(t)(x^\prime)$. By Lemma 6.1 of \cite{Anisa-Souvik}, 
the real valued function $w$ is well-defined on $H(t)$. Moreover, $w \equiv 0$ on $\partial P_t \cap \partial H(t)$ and also on $\partial H(t) \cap z_{t+ k \frac{\pi}{n}}$ for each\\ $k=1, 3, \ldots n-1$.  That is, $
w(x) = 0 ~ \forall \; x\in \partial{H(t)}\bigcap \left({\partial{P_t}\bigcup_{\substack{1\leq k\leq n-1\\k \mbox{ \small{odd} }}}z_{t+\frac{k\pi}{n}}}\right)$. Moreover, since $y(t) = M >0$ on $\partial{B}$ and $y(t) \in (0, M)$ 
 inside $\Omega(t)$, and since for each $k=0, 2, \ldots n-2$,  the reflection of $\partial{H_1^k(t)}\cap \partial B$ about the axis $z_{t+( k+1) \frac{\pi}{n}}$ lies completely inside $H_1^{k+1}(t) \subset \Omega(t)$, we obtain $w(x) > 0$ for each $x$ in $\left( \partial{H(t)}\cap \partial B \right) \setminus \left(\bigcup_{\substack{1\leq k\leq n-1\\k \mbox{ \small{odd} }}} z_{t+\frac{k\pi}{n}}\right)$. 
Now, with arguments similar to the ones in the proof of Proposition 6.2 of \cite{Anisa-Souvik}, it can be shown that 
$w(x) > 0 ~\forall \; x \in  \partial H(t) \bigcap \bigcup_{\substack{0 \leq k\leq n-2\\k \mbox{ \small{even} }}} z_{t+\frac{k\pi}{n}}$.
This is equivalent to saying that for each $k$, $0 \leq k \leq n-2, k$ even, $w(x) > 0$ for all $x \in  \partial H_1^k(t) \cap z_{t+\frac{k\pi}{n}}$. 
Therefore, the non-constant function $w$ satisfies 
$-\Delta w =0
 \mbox{ in } H(t), ~
w  \geq 0,  \mbox{ on } \partial H(t)$.
Hence, by the maximum principle, $w$ is non-negative on the whole of $H(t)$. 
Since $w$ achieves its minimal value zero on $\bigcup_{\substack{0\leq k \leq n-2\\k\equiv 0\bmod 2}}\left({\partial P_t \cap \sigma_{(k,k+1)}}\right)$ $\subset \partial H(t)$, by the Hopf maximum principle, one has $
\dfrac{\partial w}{\partial \eta_t}(x) 
{ <}0 ~~ \forall\; x\in \bigcup_{\substack{0\leq k \leq n-2\\k\equiv 0\bmod 2}}\left({\partial P_t \cap \sigma_{(k,k+1)}}\right)$. 
Also, by the application of the Hopf maximum principle to problem (\ref{laplace_equation_lc}) for $g=M>0$, it follows that
 $$\dfrac{\partial y(t)}{\partial \eta_t}(x) <0 ~\forall\; x \in \partial P_t, ~~ \mbox{ and } ~~\dfrac{\partial y(t)}{\partial \eta_t}(x) >0 ~\forall\; x \in \partial B.$$ Thus,
\begin{equation}\label {drvsgn1}
\left|{\dfrac{\partial y(t)}{\partial \eta_t}(x)}\right|^2 -\left|{\dfrac{\partial y(t)}{\partial \eta_t}(x^\prime)}\right|^2  >~ 0 ~ \forall \; x \in \bigcup_{\substack{0\leq k \leq n-2\\k\equiv 0\bmod 2}}\left({\partial P_t \cap \sigma_{(k,k+1)}}\right).\end{equation}
Now, from (\ref{drvsgn1}) and (\ref{inrprdctsgn1}), it follows that the first term in (\ref{final_integralnew}) is strictly negative. Similarly, one can prove using (\ref{inrprdctsgn2}) that the second term in (\ref{final_integralnew}) is also strictly negative. 
This proves the proposition for $n$ even. 
\qed
\end{proof}
\subsection{\textbf{Proof of Theorem \ref{max_min}}
} Theorem \ref{max_min}, for each even natural number $n$, $n \geq 3$, now follows from Propositions \ref{critical_points}, 
\ref{complete_critical_points}, and equation (\ref{even_lambda}).

\subsection{\textbf{The $n$ Odd Case}}
{ 
We face exactly the same difficulties as in \cite{Anisa-Souvik} in characterizing the optimal configurations for the odd order case completely. Please refer to section 6.7 of \cite{Anisa-Souvik} for details.
Nevertheless, we provide some numerical evidence that enables us to make a conjecture that Theorem \ref{max_min} holds true for $n$ odd too. 
\section{Proof of Theorem \ref{global}}
\label{secGlobal}

Let $d\geq 0$ denote the distance between the center of the disk and the center of the obstacle having the dihedral symmetry. Let $t\in[0,2\pi[$ denote the angle by which the obstacle is rotated about its center in the anticlockwise direction starting from the initial configuration as described in Sec. 5.1.1. clearly, $E$ is a function of both $d$ and $t$. Please recall that when $d=0$, the map $t \mapsto E(t)$ is constant as the domains remain isometric and because of the suitable boundary conditions the solution  remains unaltered. For a fixed $d>0$, it is interesting to study the behaviour of the map $t \mapsto E(t)$. This is what we have studied in Theorem \ref{max_min}.

 It also makes sense to study the behaviour of the map $d \mapsto E(d)$ for a fixed $t\in [0,2\pi[$. By arguments similar to the proof of Theorem 2.1 on page 244 of \cite{harrel_kurata} we get the following result with just the assumption that the set P is convex as well as piecewise smooth and that it be reflection-symmetric about some
line $L$. 

\begin{theorem}\label{provedinHarrell} Assume that $\Omega$ has the interior reflection property with respect to a line $L$ about which the set $P$ is reflection-symmetric. Suppose that $P$ is translated in the direction of a unit vector $v$ perpendicular to $L$ and pointing from the small side to the big side. Then, $\dfrac{dE}{dv} <0$.
\end{theorem}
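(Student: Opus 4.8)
The plan is to reduce the statement to a sign analysis of the Eulerian derivative (\ref{EulerianDerivative}) and then to establish that sign by a moving‑plane comparison together with the Hopf lemma, in the same spirit as the proof of Proposition \ref{complete_critical_points} and of \cite{harrel_kurata}. A rigid translation of $P$ in the constant direction $v$ is generated by the admissible vector field that equals $v$ in a neighbourhood of $\partial P$ and vanishes near $\partial B$, so (\ref{EulerianDerivative}) applies directly and gives
\[
\frac{dE}{dv}=-2\int_{\partial P}\left(\frac{\partial u}{\partial n}\right)^2\langle v,n\rangle\, d\Sigma ,
\]
where $n$ is the unit normal to $\Omega$ on $\partial P$ pointing into $P$. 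Thus it suffices to show the integral on the right is strictly positive. First I would fix coordinates so that $L$ is the $x_2$–axis, the small side is the half–plane $\{x_1>0\}$, and $v=(-1,0)$ points toward the big side (the side containing the center of $B$). Writing $R_L$ for the reflection about $L$, pairing each $x\in\partial P^+:=\partial P\cap\{x_1>0\}$ with $x':=R_L(x)\in\partial P^-$, and using that the symmetry of $P$ forces $n(x')=R_L\,n(x)$ (so that $\langle v,n(x)\rangle=-n_1(x)$ and $\langle v,n(x')\rangle=n_1(x)$), the reflection is measure preserving and the integral collapses to
\[
\int_{\partial P^+} n_1(x)\left[\left(\frac{\partial u}{\partial n}(x')\right)^2-\left(\frac{\partial u}{\partial n}(x)\right)^2\right]\, d\Sigma .
\]

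The heart of the argument is the comparison of the two boundary fluxes. I would introduce $w(x):=u(x)-u(R_L(x))$ on the small piece $\Omega^+:=\Omega\cap\{x_1>0\}$; this is well defined precisely because the interior reflection property guarantees $R_L(\Omega^+)\subseteq\Omega$, and $w$ is harmonic there. On $\partial\Omega^+$ one checks, using the symmetry of $P$ and the boundary data of (\ref{laplace_equation_lc}), that $w=0$ on $\partial P^+$ and on $L\cap\Omega$, while on the outer arc $\partial B\cap\{x_1>0\}$ one has $u=M$ but $u\circ R_L<M$, because the reflected arc lies strictly in the interior of $\Omega$ and $0<u<M$ in $\Omega$ by the maximum principle; hence $w>0$ there. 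The maximum principle then yields $w>0$ in the interior of $\Omega^+$, and the Hopf lemma at the smooth points of $\partial P^+$ gives $\frac{\partial w}{\partial n}<0$, i.e. $\frac{\partial u}{\partial n}(x)<\frac{\partial u}{\partial n}(x')<0$ for $x\in\partial P^+$ (the second inequality is the Hopf lemma applied to $u$ itself, exactly as in the proof of Proposition \ref{complete_critical_points}). Squaring reverses the order, so $\bigl(\frac{\partial u}{\partial n}(x)\bigr)^2>\bigl(\frac{\partial u}{\partial n}(x')\bigr)^2$ and the bracket in the displayed integral is negative.

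Finally I would close the sign computation using the convexity of $P$: at every smooth point of $\partial P^+$ the outward normal to $P$ has nonnegative first component (if it were negative at some $x$ with $x_1>0$, the reflected point $R_L(x)\in P$ would violate the supporting line at $x$), so the outward normal $n$ to $\Omega$ satisfies $n_1\le 0$, with $n_1<0$ on a subset of $\partial P^+$ of positive measure. Multiplying the negative bracket by $n_1\le 0$ makes the integrand nonnegative and strictly positive on a set of positive measure; hence the integral is strictly positive and $\frac{dE}{dv}<0$, as claimed. This sign is consistent with Theorem \ref{global}, since $v$ points toward the center of $B$ and the concentric configuration is the minimiser.

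I expect the main obstacle to be the careful verification of the boundary behaviour of $w$ on $\partial\Omega^+$, and in particular making rigorous that the reflected outer arc lands strictly in the interior of $\Omega$, which is where the interior reflection property is used in an essential, quantitative way, together with handling the piecewise–smooth corners of $\partial P$ so that the Hopf lemma is applied only at genuine smooth points. A secondary technical point will be confirming that the translation is realised by an admissible vector field of the class required in Section \ref{scal}, so that (\ref{EulerianDerivative}) is legitimately applicable; this is routine but should be stated explicitly.
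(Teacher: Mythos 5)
Your argument is correct and follows essentially the same route the paper takes: the paper proves this theorem by invoking the moving-plane argument of Theorem 2.1 of \cite{harrel_kurata}, which is exactly the combination you spell out --- the Eulerian derivative formula (\ref{EulerianDerivative}) with a cut-off translation field, folding the boundary integral over the reflection $R_L$, the comparison function $w = u - u\circ R_L$ on the small side handled via the maximum principle and the Hopf lemma, and the convexity of $P$ to fix the sign of $\langle v,n\rangle$ on each half of $\partial P$. The only point worth flagging is that your final convexity step is precisely where the theorem's hypothesis that $P$ be convex and piecewise smooth enters; without it $\langle v,n\rangle$ need not have a constant sign on $\partial P^{+}$, which is exactly why the paper states the result only under that assumption.
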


From Theorem \ref{provedinHarrell} we get the following result: 
\begin{corollary}\label{notproved} Fix $n \geq 3$, $n \in \mathbb{N}$. Fix $t \in [0,2\pi[$.  
Let $x$ denote the center of the obstacle $P_{(d,t_0)}$. Then, at any minimizing $x$, 
\begin{enumerate}[a)]
\item $\Omega=B \setminus P_{(d,t_0)}$ has no hyperplane of interior reflection containing $x$. Moreover, at any maximizing $x$, either statement (a) above is true, or else
\item the circumcircle $C_2$ of $P_{(d,t_0)}$ intersects the small side of $\partial B$. 
\end{enumerate}
\end{corollary}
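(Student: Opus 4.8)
The plan is to read the corollary off Theorem \ref{provedinHarrell} by a first-order optimality argument at an optimal center, with the rotation angle held fixed. First I would fix $t_0$ and regard $E=E(x)$ as a function of the admissible positions $x$ of the center of $P_{(d,t_0)}$ inside $B$; for every admissible translation direction the corresponding directional derivative exists, being the Eulerian derivative of Section \ref{scal} specialised to a (cutoff) constant vector field, and Theorem \ref{provedinHarrell} supplies its sign whenever a suitable line of interior reflection is present. The whole argument then reduces to comparing that sign against the optimality conditions at $x$.

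For the minimizing case I would argue by contradiction. Suppose $x$ is a minimizer but $\Omega=B\setminus P_{(d,t_0)}$ admits a line $L$ of interior reflection containing $x$. Since $L$ passes through the center of the $\mathbb{D}_n$-symmetric obstacle, the interior reflection property forces $L$ to be one of the symmetry axes of $P$ (otherwise the reflection of the small side of $\Omega$ would meet the hole $P$, violating interior reflection), so $P$ is reflection-symmetric about $L$ and Theorem \ref{provedinHarrell} applies. Letting $v$ be the unit normal to $L$ pointing from the small side to the big side, the theorem gives $dE/dv<0$. Because the interior reflection property yields a strict gap on the big side, $x$ can be translated slightly in the direction $v$ while staying admissible, strictly decreasing $E$, contradicting minimality. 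Hence no such $L$ exists, which is statement (a); note this is consistent with the concentric configuration of Theorem \ref{global} being the minimizer, since at $d=0$ there is no strict interior reflection.

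For the maximizing case I would run the same construction. If a line $L$ of interior reflection through $x$ exists (again necessarily a symmetry axis of $P$), then Theorem \ref{provedinHarrell} gives $dE/dv<0$, i.e. $dE/d(-v)>0$, where $-v$ points from the big side toward the small side. Thus translating the center toward the small side strictly increases $E$. At a maximizer this increase must be blocked, which can only happen if the configuration is already extreme in the direction $-v$, i.e. the obstacle abuts $\partial B$ on the small side; since $\partial P$ first meets $\partial B$ at an outer vertex, through which the circumcircle passes, this is exactly the statement that $C_2$ of $P_{(d,t_0)}$ intersects the small side of $\partial B$. This yields the dichotomy: either there is no line of interior reflection through $x$ (statement (a)), or (b) holds.

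I expect the main obstacle to be the justification that a hyperplane of interior reflection containing the center $x$ is necessarily a symmetry axis of $P$, so that the symmetry hypothesis of Theorem \ref{provedinHarrell} is genuinely met; this is where the dihedral symmetry of $P$ and the fact that $P$ is a hole in $\Omega$ (rather than solid) must be used carefully. A secondary point to verify is the admissibility bookkeeping in each case: that at a minimizer there is always strict room to translate toward the big side (guaranteed by the strictness in the interior reflection property), while at a maximizer the only obstruction to translating toward the small side is precisely the contact of $C_2$ with the small side of $\partial B$.
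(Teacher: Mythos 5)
Your proposal follows essentially the same route as the paper: there the corollary is presented as an immediate consequence of Theorem \ref{provedinHarrell} via the first-order optimality argument of \cite{harrel_kurata} (translate toward the big side to contradict minimality; translate toward the small side at a maximizer until admissibility is obstructed by $C_2$ meeting $\partial B$), which is exactly what you spell out. The paper supplies no further detail, so the point you flag as the main obstacle --- that a hyperplane of interior reflection through the center must actually be an axis of symmetry of $P$ in order for Theorem \ref{provedinHarrell} to apply --- is likewise left implicit there.
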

Clearly, the domain $\Omega_{(d,t_0)}$ enjoys the interior reflection property w.r.t. its all axes of symmetry of $P_{(d,t_0)}$ that are not the diameters of $B$. Therefore, as in \cite{harrel_kurata}, we immediately get 
 that
\begin{corollary} \label{MC}  Fix $n \geq 3$, $n \in \mathbb{N}$ and $t \in [0,2\pi[$. Then,  (a) the concentric configuration, i.e., $d=0$, is the only candidate for the minimiser of the map $d \longmapsto E(d)$, and 
(b) at any maximising configuration of the map $d \longmapsto E(d)$, the circumcircle $C_2$ of the obstacle $P_{(d,t_0)}$ must intersect $\partial B$.
\end{corollary}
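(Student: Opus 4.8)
The plan is to derive Corollary \ref{MC} from Corollary \ref{notproved} by exploiting the dihedral symmetry of the obstacle, so that everything reduces to a simple counting argument about which axes of symmetry of $P$ can be diameters of $B$, together with a verification of the interior reflection property.

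First I would fix a configuration $\Omega_{(d,t_0)}$ with $d>0$ and record the key geometric fact. The $n$ axes of symmetry of $P_{(d,t_0)}$ all pass through its center $x$, whereas a line through $x$ is a diameter of $B$ precisely when it also passes through the center of $B$. Since $d>0$, the points $x$ and the center of $B$ are distinct and hence determine a unique line; therefore at most one of the $n$ axes of symmetry can be a diameter of $B$. As $n\geq 3$, this leaves at least $n-1\geq 2$ axes of symmetry that are not diameters of $B$. For each such axis $L'$, the disk $B$ fails to be symmetric about $L'$ while $P$ is, and I would check that reflecting the small side of $\Omega$ across $L'$ lands inside $\Omega$, i.e. that $\Omega$ has the interior reflection property with respect to $L'$, with $L'$ containing $x$. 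This verification is the \emph{main obstacle}: although it is stated as ``clearly'' in the text, it genuinely rests on the monotonicity condition (iv) on $\partial P$ together with the monotonicity of $\partial B$ recorded in Lemma 4.1 of \cite{Anisa-Souvik}, which are exactly what guarantee that the reflected boundary arc neither pokes out of $B$ nor into $P$.

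With that in hand, part (a) is immediate. Corollary \ref{notproved}(a) says a minimizing center $x$ admits no line of interior reflection through it, yet every configuration with $d>0$ admits at least two such lines; hence no $d>0$ configuration can minimize, and the only candidate minimizer is the concentric one $d=0$.

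For part (b) I would first argue that $d=0$ cannot be a maximizer: by Theorem \ref{provedinHarrell} the directional derivative of $E$ is strictly negative when $P$ is translated toward the big side, i.e. toward the center of $B$, so $E$ strictly decreases as the obstacle approaches concentricity, making $d=0$ a strict minimum and not a maximum. Thus any maximizing configuration has $d>0$, and by the argument above statement (a) of Corollary \ref{notproved} fails for it. Corollary \ref{notproved} then forces its alternative: the circumcircle $C_2$ of $P_{(d,t_0)}$ meets the small side of $\partial B$. Since the small side is a subset of $\partial B$, this gives $C_2\cap\partial B\neq\emptyset$, which is exactly the assertion of part (b).
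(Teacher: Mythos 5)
Your proof is correct and takes essentially the same route as the paper: the paper also deduces Corollary \ref{MC} from Corollary \ref{notproved} by noting that $\Omega_{(d,t_0)}$ has the interior reflection property with respect to every axis of symmetry of $P_{(d,t_0)}$ that is not a diameter of $B$ (of which there are at least $n-1$ when $d>0$), and it uses Theorem \ref{provedinHarrell} to rule out $d=0$ as a maximiser; you have merely made explicit the counting argument and the case analysis that the paper compresses into ``clearly \ldots we immediately get.'' One small correction to the step you single out as the main obstacle: the interior reflection property for such an axis $L'$ does not rest on the monotonicity condition (iv) or on Lemma 4.1 of \cite{Anisa-Souvik}; it follows already from the facts that $P$ is symmetric about $L'$ (so the reflection fixes $P$ and the reflected region cannot meet $P$) and that $B$ is a disk (so the smaller circular segment cut off by the chord $L'\cap B$ reflects into the larger one) --- the monotonicity hypotheses are needed for the sector reflection and rotating plane arguments elsewhere, not here.
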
 Since $\Delta$ is invariant under the isometries of the domain, and since the boundary data in (\ref{laplace_equation_lc}) is nice, it follows that for a fixed $t\in[0,2\pi[$, in order to study the behaviour of $d\mapsto E(d)$, it is enough to translate the center of the obstacle $P_t$ along the positive $x_1$-axis. Since $E$ is an even function, 
it follows by Theorem \ref{provedinHarrell} that, $d\mapsto E(d)$ is minimum for $d=0$ and is a strictly increasing function of $d$ in $]0, r_1-r_0^2[$. Then, Corollary \ref{MC} along with Theorem \ref{max_min} imply Theorem \ref{global} that characterises the maximising and the minimising configurations over the family of domains $\mathcal{G}$. Applying the idea from \cite{harrel_kurata} to the candidates for the minimising configurations over $\mathcal{G}$ for $n$ even, $n \geq 3$, we get that, when $g=M>0$, at the global maximising configurations, w.r.t. both the translations of the obstacle within $B$ as well as the rotations of the obstacle about its center, the obstacle must be in an OFF position w.r.t. $B$ with its outer vertex touching $\partial B$. The global minimiser for $n \geq 3$, even or odd, remains to be the concentric configuration.
\section{Proof of Theorem \ref{expansion}}
\label{secExpansion}
In order to study the behaviour of $E$ w.r.t. the expansion or contraction of the obstacle, we fix $d \geq 0$ and $t \in [0, 2 \pi)$ such that $P_{(d,t)}$ lies completely inside $B$. We now scale $P=P_{(d,t)}$ by a factor $\lambda > 0$, $\lambda \in \mathbb{R}$, such that $\lambda \, P_{(d, t)} := \{\lambda \, x \,|\, x \in P_{(d, t)} \}$ still lies completely inside $B$. Let $\Omega_{\lambda}:= B \setminus \lambda \, P_{(d,t)}$. 
Let $E(\lambda):=E(\Omega_{\lambda})$. Let $\mathcal{H}$ be defined as $\{\Omega_{\lambda} \, :\, \lambda >0, \lambda \in \mathbb{R}, \lambda \, P \subset B\}$. We denote the largest admissible value of $\lambda$ by $\lambda_P$ which depends on $d$ and $t$ both.

 Fix $n \in \mathbb{N}, n\geq 3$. Fix $\lambda>0$. Here, the vector field $V$ which is responsible for the perturbation we wish to establish, is given by $V(x)= \lambda\, \rho(x)\, x$. Here, as before, $\rho: \mathbb{R}^2 \rightarrow [0,1]$ is a smooth function with compact support in $B$ such that $\rho \equiv 1$ in a neighbourhood of $cl(conv(C_2(P)))$.  As in section \ref{scal}, one can prove that the Eulerian derivative of $E$ at  
$\Omega_{\lambda}$ in the direction $V$ exists for each $\lambda >0$. Further, for each $\lambda >0$, as in equation (\ref{EulerianDerivativeS1}), we get 
 \begin{equation}\label{Eprimelambda}E^\prime(\lambda)= -2\int_{\partial P} \left( \dfrac{\partial u}{\partial n}\right)^2 \left<V, n\right>d\Sigma =-2 \, \lambda \, \int_{\partial P} \left( \dfrac{\partial u}{\partial n}\right)^2(x) \left<x, n(x)\right>d\Sigma .\end{equation}
 Now, from (i) of Lemma 4.2 of \cite{Anisa-Souvik} we have, for $x= f(\phi) e^{i \phi}$ on $\partial P$, $n(x) = \dfrac{i \, f^\prime(\phi) e^{i \phi} - f(\phi) e^{i\, \phi}}{\sqrt{f^2(\phi)+ [f^\prime(\phi)]^2}}$. As a result we get, $\left<x, n(x)\right> = \dfrac{ - f^2(\phi)}{\sqrt{f^2(\phi)+ [f^\prime(\phi)]^2}} = \dfrac{ -\|x\|^2}{\sqrt{f^2(\phi)+ [f^\prime(\phi)]^2}} $. Substituting this in (\ref{Eprimelambda}) we get, 
 $\displaystyle E^\prime(\lambda)= 2 \, \lambda \, \int_{\partial P} \left( \dfrac{\partial u}{\partial n}\right)^2(x) \dfrac{ \|x\|^2}{\sqrt{f^2(\phi)+ [f^\prime(\phi)]^2}} \,d\Sigma >0.$ Thus we have proved that, the map $\lambda \longmapsto E(\lambda)$ is a differentiable function from $(0, \lambda_P)$ to $(0, \infty)$ and that the energy functional is a strictly monotonically increasing function of $\lambda$ with $E^\prime(\lambda) \longrightarrow 0$ as $\lambda \longrightarrow 0$. This proves Theorem \ref{expansion}. 
\section{Numerical Results} \label{sec:num_results} We give some numerical evidence supporting Theorem \ref{max_min}.  
We first fix the value of the boundary data $M$ to be 1. We provide numerical evidence for $n=5$ and conjecture that Proposition \ref{complete_critical_points}, and hence, Theorem \ref{max_min} hold true for $n$ odd too. Therefore, we choose two obstacles, a square and a pentagon, with dihedral symmetry of order $n=4,5$. We solve the boundary value problem (\ref{laplace_equation_lc}) in the domain $\Omega= B \setminus P$ using finite element method with $P^1$ elements (see e.g., \cite{SR1,SR2}) on a mesh with element size $h=0.018$. The result for the square obstacle are shown in Figure \ref{simulation_results_sq}.
\begin{figure}[H]
  \centering
    \subfloat[      OFF position]{%
     \label{fig:min} \includegraphics[scale=0.2,keepaspectratio]{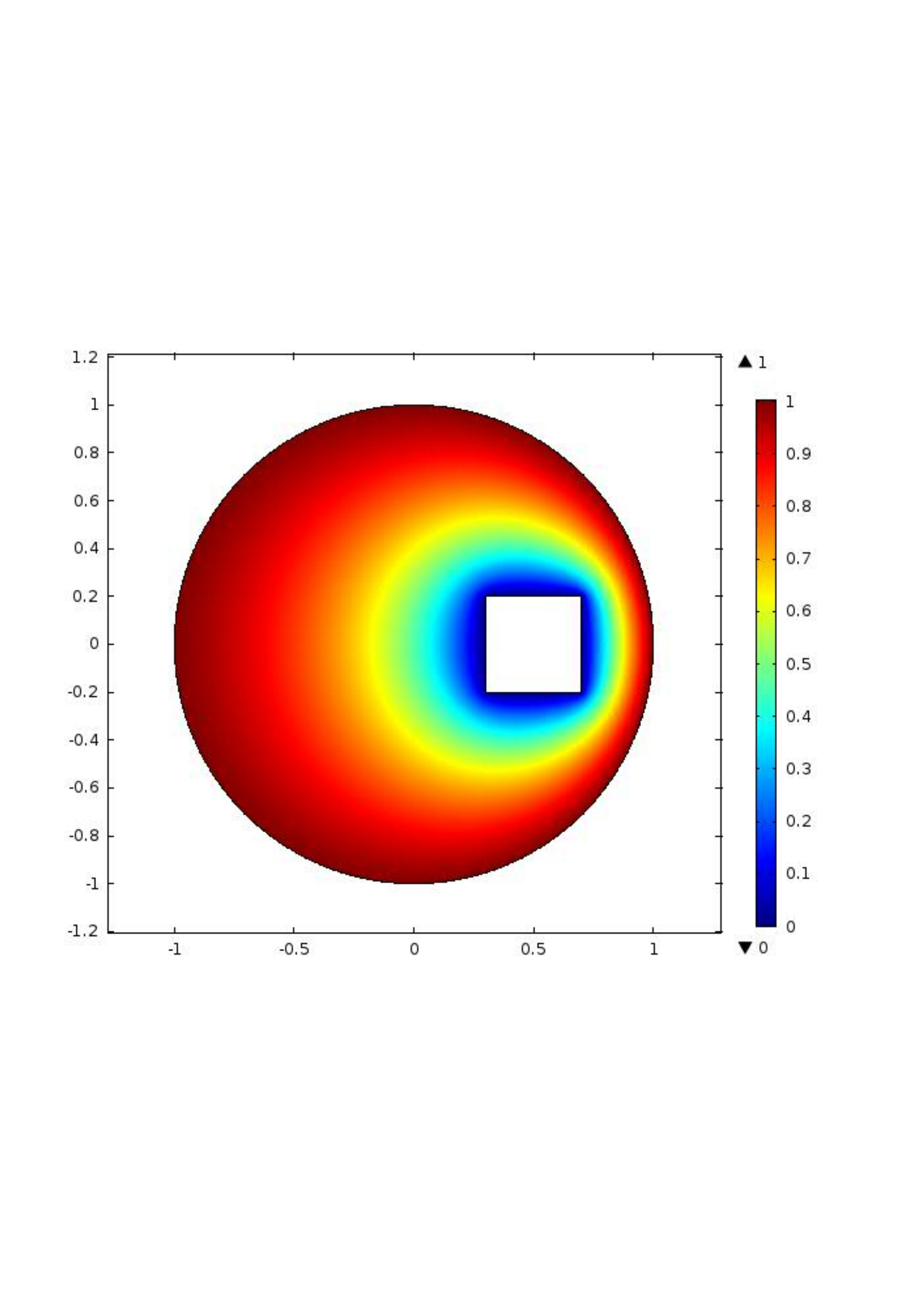}}
    \subfloat[      Intermediate Position]{%
      \label{fig:middle}\includegraphics[scale=0.2,keepaspectratio]{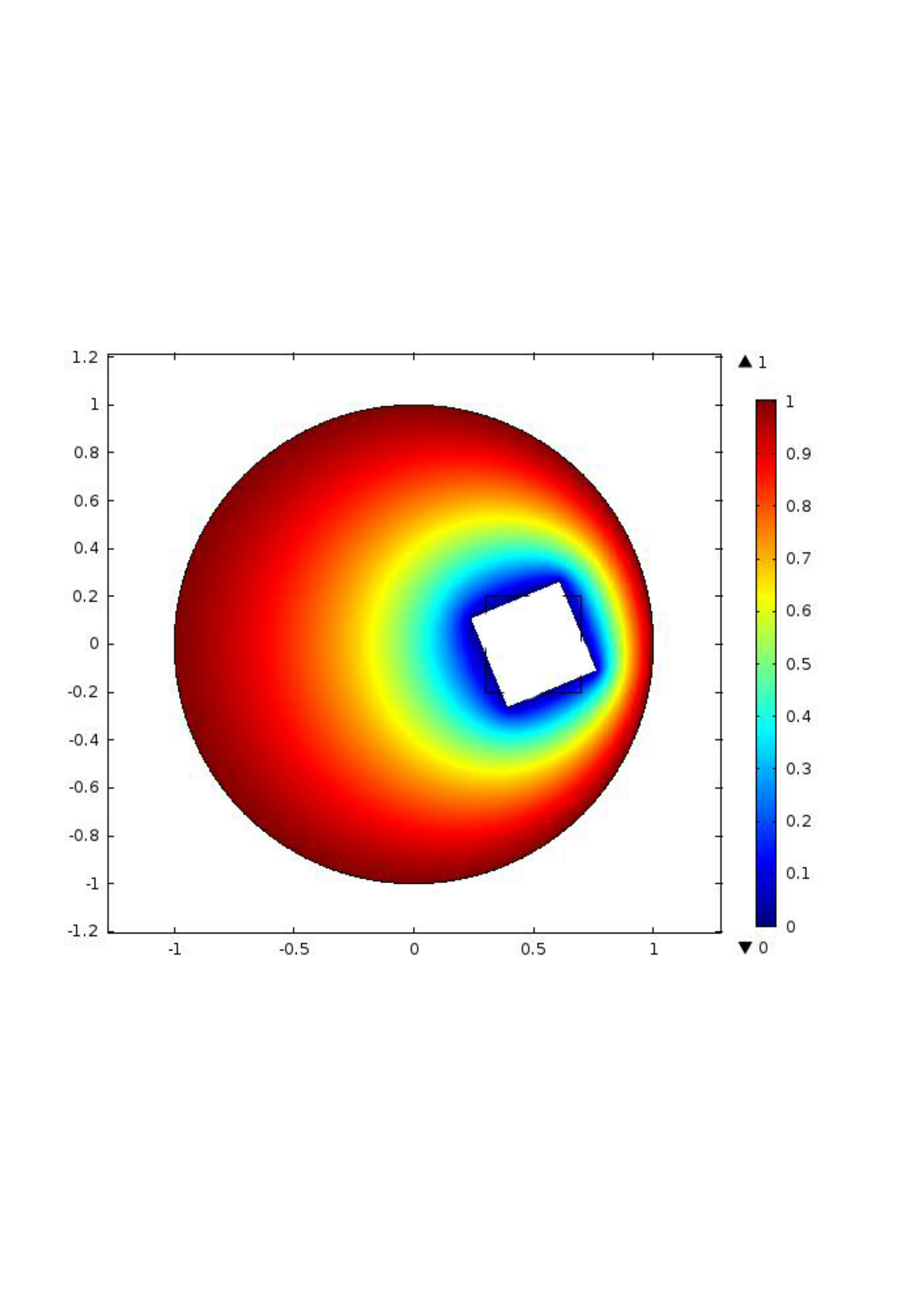}}
        \subfloat[      ON position]{%
      \label{fig:max}\includegraphics[scale=0.2,keepaspectratio]{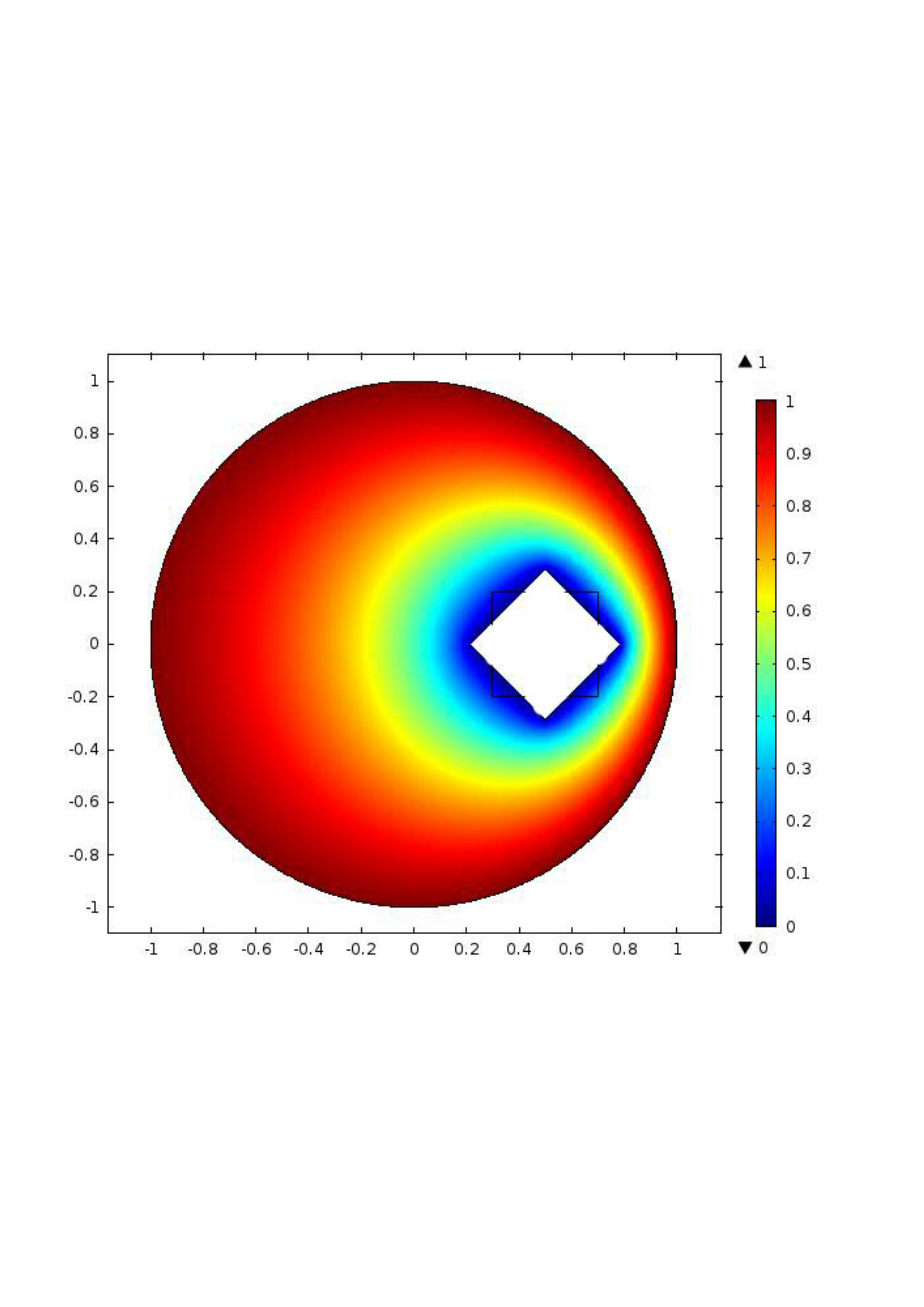}}
      \caption{Simulations of extremal configurations for a square obstacle }
       \label{simulation_results_sq}
      \end{figure}
 Figures \ref{fig:min}-\ref{fig:max} shows the OFF, intermediate and ON configurations. The OFF and the ON configurations are the maximizer and the minimizer for the energy $E$ respectively, which is also reflected in Table \ref{table1}.  
\begin{table}[H]
\caption{Values of the energy functional $E$ at different configurations for a square obstacle when $d=0.5$}
\begin{center}
\begin{tabular}{|c| c| c|c|c|c|}
\hline
Configuration &OFF &INTERMEDIATE &ON &INTERMEDIATE & OFF  \\ [0.1ex]
\hline
$\theta$ & 0 & $\pi/8$ & $\pi/4$ & 3 $\pi/4$ & $\pi/2$ \\
\hline
$E$ & 5.57787	&5.57389 &	5.56991	& 5.57386 	&5.57787\\[0.1ex]
\hline
\end{tabular}
\end{center}
\label{table1}
\end{table}
We next conjecture that Theorem \ref{max_min} is true for odd $n$ too by demonstrating quantitative and qualitative results for an obstacle having pentagonal shape. The observations for obstacle having a pentagonal shape are shown in Figure \ref{simulation_results_pentagon}.
\begin{figure}[H]
  \centering
    \subfloat[OFF position, $\theta=0$]{
     \label{fig:pentagon_min} \includegraphics[scale=0.15,keepaspectratio]{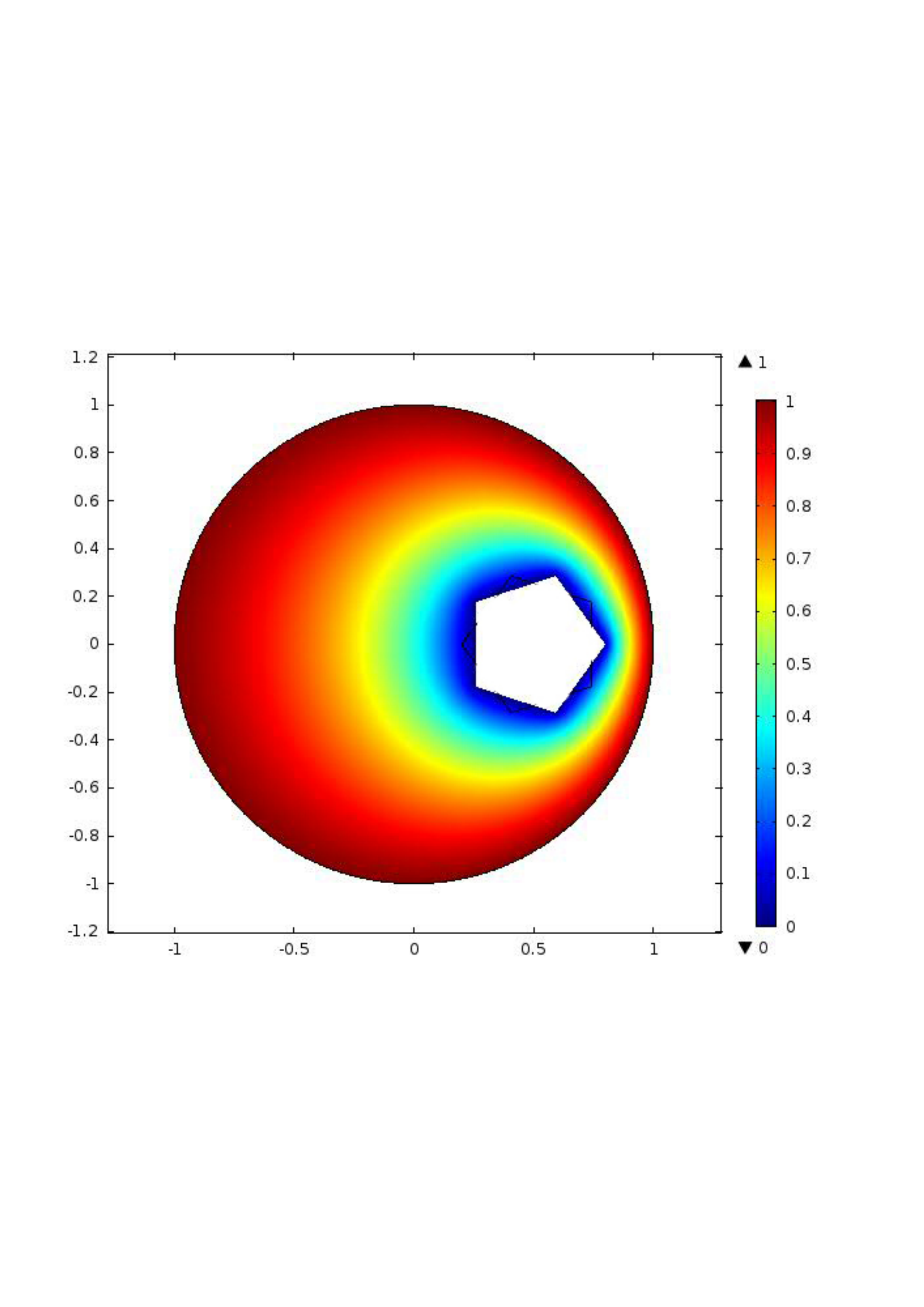}}\hspace{7mm}
    \subfloat[Intermediate Position, $\theta=\pi/10$]{
      \label{fig:pentagon_middle}\includegraphics[scale=0.15,keepaspectratio]{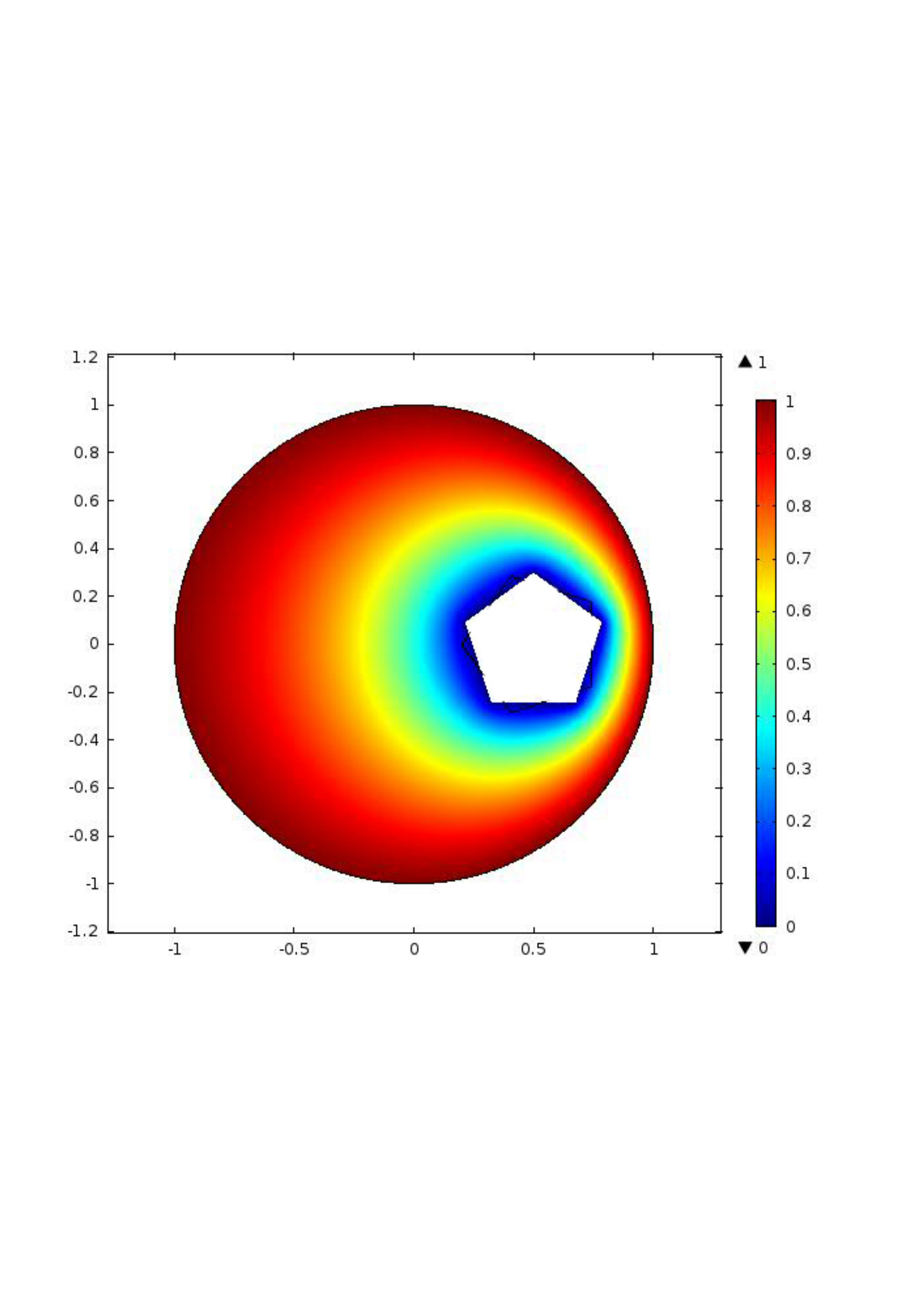}}\hspace{7mm}        \subfloat[ON position, $\theta=\pi/5$]{
      \label{fig:pentagon_max}\includegraphics[scale=0.15,keepaspectratio]{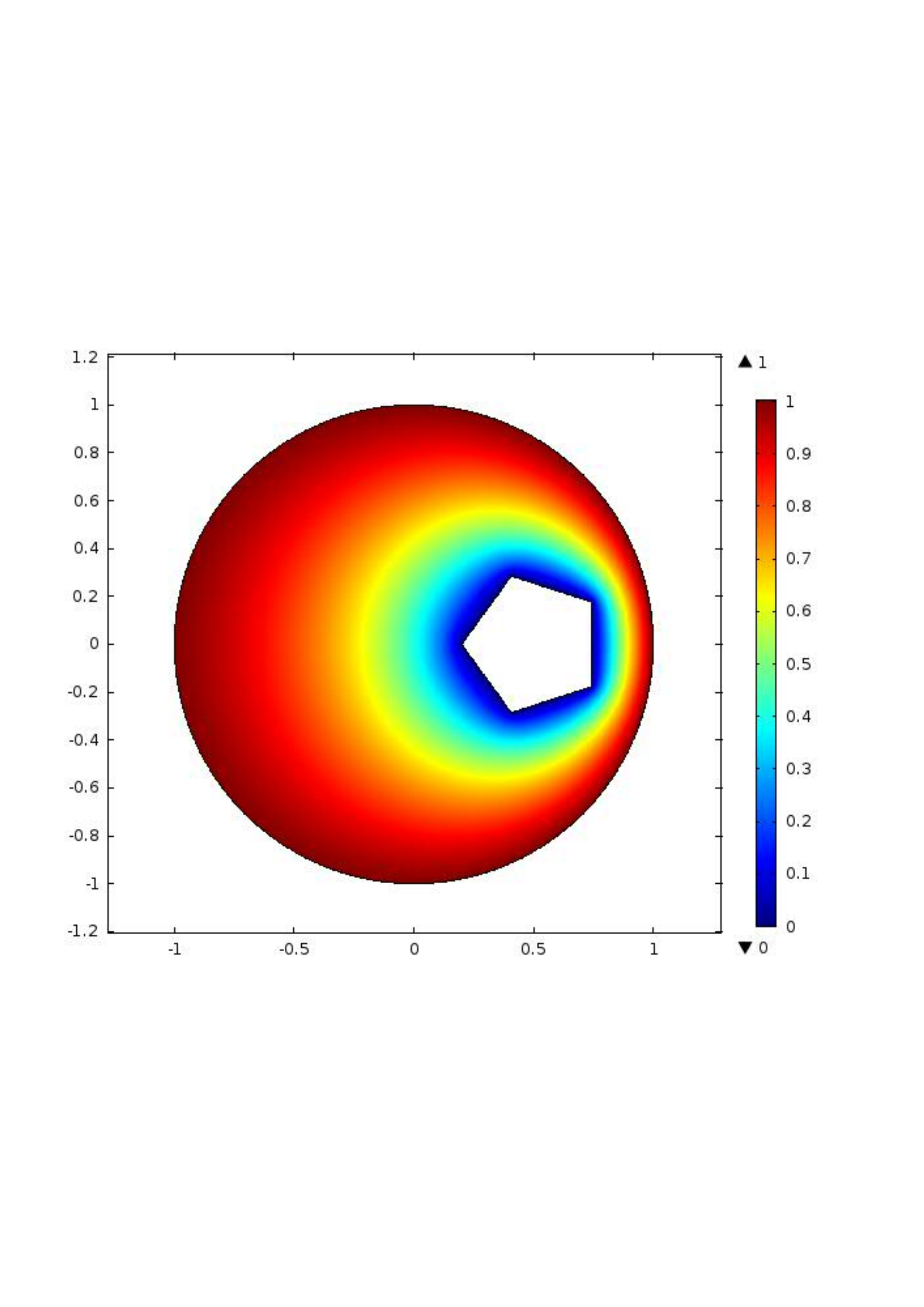}}\hspace{7mm}
      \subfloat[Intermediate position, $\theta=3 \pi/10$]{%
      \label{fig:pentagon_max}\includegraphics[scale=0.15,keepaspectratio]{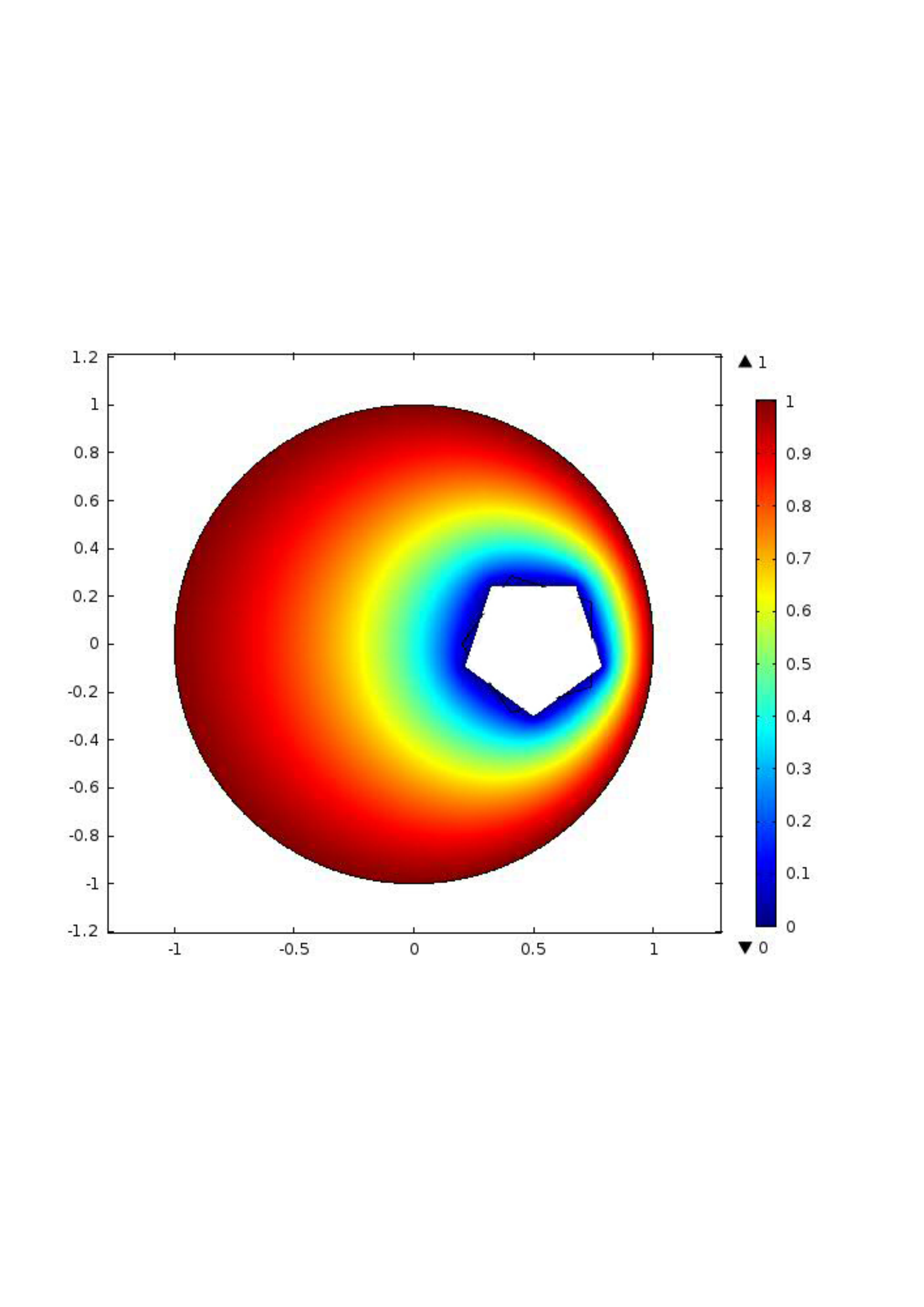}}
      \caption{Simulations of extremal configurations for a pentagonal obstacle}
      \label{simulation_results_pentagon}
 \end{figure}    
Figures \ref{fig:pentagon_min}-\ref{fig:pentagon_max} shows the OFF, intermediate and ON configurations. The OFF and the ON configurations are the maximizer and the minimizer for the energy $E$ respectively, which is also reflected in Table \ref{table2}.  
\begin{table}[H]
\caption{Values of the energy functional $E$ at different configurations for a pentagonal obstacle when $d= 0.5$}
\begin{center}
\begin{tabular}{|c| c| c|c|c|c|}
\hline
Configuration &  OFF &INTERMEDIATE &ON & INTERMEDIATE & OFF
\\ [0.1ex]
\hline
$\theta$ & 0	&$\pi/10$ & $\pi/5$ & $3 \pi/10$ & $2 \pi/5$ \\
\hline 
$E$ &6.30518 & 6.30378 &	6.30239 & 6.30378 &6.30518\\[0.1ex]
\hline
\end{tabular}
\end{center}
\label{table2}
\end{table} 
We also give some numerical evidence supporting Theorem \ref{global}.  
We once again choose two obstacles, a square and a pentagon, with dihedral symmetry of order $n=4,5$. 
Tables \ref{table3} and \ref{table4} indicates the behaviour of the energy functional w.r.t $d$ and $\theta$. Here, the values in boldface text are the ones where the obstacle is touching the boundary of the disk. The term NA stands for the cases where the $P$ starts to intersect the outside of the disk. 
\begin{table}[H]
\caption{Values of the energy functional $E(d, \theta)$ at different configurations for a square obstacle}
\begin{center}
\begin{tabular}{|c| c| c|c|c|}
\hline
& & 
OFF ($\theta =0$)
&
INTERMEDIATE ($\theta = \frac{\pi}{8}$) 
& 
ON ( $\theta = \frac{\pi}{4}$ )
 \\ [1.5ex]
\hline
& $d$ & $E({d, 0})$ &$E(d, \frac{\pi}{8})$ & $E(d, \frac{\pi}{4})$\\[1.2ex]
\hline
Concentric & 0& 4.3540729& 4.3540841 &4.3540737\\
\hline
& 0.4 &5.00795	&5.00721 & 5.00645 \\
&0.5 & 5.57787	&5.57389& 5.56991\\
& 0.7& 11.32049& 10.42402 & 9.83979 \\
& 0.71 & 13.55325 	& 11.50058& 10.53196\\
& 0.715 &  {\bf 16.62217}	& 12.24159& 10.94060\\
& 0.73& NA & {\bf 17.80873}&12.53175 \\
& 0.78 & NA &NA &{\bf 63.88532} 
\\[0.1ex]
\hline
\end{tabular}
\end{center}
\label{table3}
\end{table}
\begin{table}[H]
\caption{Values of the energy functional $E(d, \theta)$ at different configurations for a pentagon obstacle}
\begin{center}
\begin{tabular}{|c| c| c|c|c|}
\hline
& & 
OFF ($\theta =0$)
&
INTERMEDIATE ($\theta = \frac{\pi}{10}$) 
& 
ON ( $\theta = \frac{\pi}{5}$ )
 \\ [1.5ex]
\hline
& $d$ & $E({d, 0})$ &$E(d, \frac{\pi}{10})$ & $E(d, \frac{\pi}{5})$\\[1.2ex]
\hline
Concentric & 0& 4.75447& 4.75448&4.75447\\
\hline
& 0.4 &5.5653& 5.56512	&5.56494  \\
& 0.5 & 6.30518 & 6.30378&	6.30239 \\
& 0.7& {\bf 19.40831} & {\bf 16.22683}  & 14.33736 \\
& 0.712&  NA & {\bf 23.41319} & 18.34214\\ 
& 0.72 & NA& NA& {\bf 19.94229}\\[0.1ex]
\hline
\end{tabular}
\end{center}
\label{table4}
\end{table}
In Tables \ref{table5} and \ref{table6}, we give the numerical data for the behaviour of the energy functional w.r.t. expansion and contraction of a square and a pentagonal obstacle, respectively. The value of $E$ does depend on $d$ and $\theta$ both. We fix $\theta =0$ for the square and $\theta =\frac{\pi}{10}$ for the pentagon.
\begin{table}[H]\caption{Values of the energy functional $E(\lambda,d)$ for a square obstacle when $\theta =0$} 
\begin{center}
\begin{tabular}{|c| c|c|c|}
\hline
$\lambda$ & d=0 & d=0.4 & d=0.7\\[0.1ex]
\hline
0.5  &2.96043	&3.23085	&4.41850\\[0.1ex]
1  &4.37408	&5.03007	&9.93242\\[0.1ex]
1.5  &6.07416	&7.61090	&NA\\[0.1ex]
2  &8.40577	&12.74641	&NA \\[0.1ex]
\hline
\end{tabular}
\end{center}
\label{table5}
\end{table}
\begin{table}[H]\caption{Values of the energy functional $E(\lambda, d)$ for a pentagonal obstacle when $\theta= \pi/10$}
\begin{center}
\begin{tabular}{|c|c|c|c|}
\hline
$\lambda$ & d=0 & d=0.4 & d=0.7\\[0.1ex]
\hline
0.5  &3.12893	&3.43447	&4.84344\\[0.1ex]
1  &4.76700	&5.58059	&16.44889\\[0.1ex]
1.5  &6.87274	&9.10172		&NA\\[0.1ex]
2  &10.01701	 &21.54845	&NA \\[0.1ex]
\hline
\end{tabular}
\end{center}
\label{table6}
\end{table}
In Table \ref{table7}, we give the numerical data for the behaviour of the energy functional when the obstacle is also circular. The proof of this, and of a more general problem is part of our upcoming paper. Please note that, in this case, $E$ is constant w.r.t. the parameter $\theta$. 
\begin{table}[H]\caption{Values of the energy functional $E$ at different locations of a circular obstacle}
\begin{center}
\begin{tabular}{|c|c|c|c|c|c|c|c|c|c|c|}
\hline
$d$ & 0 & 0.1 & 0.2 & 0.3 & 0.4 & 0.5 & 0.6 & 0.7\\[0.1ex]
\hline
$E$  &5.21871	&5.2671	&5.4221	&5.7192 &6.24655	&7.24692	&9.71217	&1679.1395\\[0.1ex]
\hline
\end{tabular}
\end{center}
\label{table7}
\end{table}
\section{Conclusions}\label{Conc}
In this paper, we considered an obstacle placement problem, where the planar obstacle is invariant under the action of a dihedral group. We considered an open disk in the Euclidean plane containing the obstacle such that the centers of the enclosing disk and the obstacle are non-concentric. This article is a follow up of the paper \cite{Anisa-Souvik}. The family of domains considered here is the same as the one considered in \cite{Anisa-Souvik} but the boundary value problem is different from that of \cite{Anisa-Souvik}. In \cite{Anisa-Souvik}, we had homogeneous boundary data while in the current paper we consider an inhomogeneous boundary data. Further, in the place of an eigenvalue problem considered in \cite{Anisa-Souvik}, we consider a Laplace equation here. We carry out the shape calculus analysis for the boundary value problem at hand and derive an expression for the Eulerian derivative of the energy functional. Once we have this expression, the proof for finding optimal configurartions w.r.t. the rotations of the obstacle about its fixed center follows more or less from \cite{Anisa-Souvik}. We prove this result for the case where the obstacle has an even order dihedral symmetry. For the case of odd order symmetry, we have partial results. We face exactly the same difficulties as in \cite{Anisa-Souvik} in characterizing the optimal configurations for the odd order case completely. 
We thereby formulate conjectures about such configurations based on numerical evidence. 

We further characterize the global maximizing and the global minimizing configurations w.r.t. the rotations of the obstacle about its center as well as the translations of the obstacle within the disk. For the odd order case, we identify the global minimizing configuration and partially identify the global maximizing configurations as opposed to the even order case, where we completely characterize them. 

We also prove that the energy functional strictly increases as the obstacle $P$ expands inside the disk $B$ and strictly decreases as the obstacle $P$ contracts inside $B$. Further, the energy functional approaches its minimum value as the obstacle $P$ shrinks to a point. 
It's worth mentioning here that the BVP  (\ref{laplace_equation_lc}) does not admit any solution when $P$ shrinks to the center of $B$. 
Also, a harmonic function on any planar disk (without any hole or puncture) with constant boundary condition always has zero energy. 

Finally, we provide some numerical evidences to support our theoretical findings and the conjectures.

It's not difficult to see that the energy functional increases as $|M|$, the absolute value of the boundary data, increases. In fact, the energy functional scales by a factor of $\alpha^2$ as the value of the boundary data $g$ scales from $M$ to $\alpha \, M$, $\alpha \in \mathbb{R}$.
\section{Acknowledgments} Thanks to Dr. Apala Majumdar, University of Bath, UK for introducing the author to the obstacle placement problems for Liquid Crystals, and for suggesting the boundary value problem which has potential applications in some liquid crystal context. We thank Dr. Rajesh Mahadevan, Universidad de Concepcion, Chile for suggesting a reference for the shape calculus part. We thank Dr. Souvik Roy, University of Texas, Arlington and Ashok Kumar K., IIT Madras for their help in providing the numerical data. 
We also thank University of Bath and Universidad de Concepcion for their hospitality.

\end{document}